\providecommand{\U}[1]{\protect\rule{.1in}{.1in}}
\def\subneq{\mathop{\raise 0.7ex \hbox{$\subset$}}\!\!\!\!\!\!{\raise -0.6ex\hbox{$\neq$}}\,}
\def\div{\mathop{\rm div}\nolimits}
\def\u{\underline}
\def\RR{\mathbb{R}}
\def\1{{1\hskip-0.25em{\rm l}}}
\def\sobre#1#2{\lower 1ex \hbox{ $#1 \atop #2 $ } }
\def\bajo#1#2{\raise 1ex \hbox{ $#1 \atop #2 $ } }
\def\Ae{A^{0,\ep}}
\def\Be{B^{0,\ep}}
\def\Ce{C^{0,\ep}}
\def\De{D^{0,\ep}}
\def\div{{\rm div}}
\def\ep{\varepsilon}
\def\p{\partial}
\def\O{\Omega}
\def\Oe{\Omega^\ep}
\def\u2{{u^\ep \over \ep^2 }}
\def\u3{{\displaystyle {\bar u}^\ep \over \ep^2 }}
\def\div{\mathop{\rm div}\nolimits}
\def\u{\underline}
\def\sobre#1#2{\lower 1ex \hbox{ $#1 \atop #2 $ } }
\def\bajo#1#2{\raise 1ex \hbox{ $#1 \atop #2 $ } }
\def\div{{\rm div}}
\def\ep{\varepsilon}
\def\p{\partial}
\def\u2{{u^\ep \over \ep^2 }}
\def\u3{{\displaystyle {\bar u}^\ep \over \ep^2 }}
\def\p{\partial}
\def\pej{\mbox{\rm Pe}_{j}}
\def\pei{\mbox{\rm Pe}_{i}}
\def\dsp{\displaystyle}
\def\x{x}
\def\tlambda{\tilde{\lambda}}
\def\fin{{\rm final}}
\newtheorem{theorem}{Theorem}
\newtheorem{lemma}[theorem]{Lemma}
\newtheorem{proposition}[theorem]{Proposition}
\newtheorem{remark}[theorem]{Remark}
\begin{document}

\begin{frontmatter}

\title{Role of  non-ideality for the  ion transport in  porous media: derivation of the macroscopic equations using upscaling}

\author[GA]{Gr\'egoire Allaire\fnref{fn1,fn2}}
\ead{gregoire.allaire@polytechnique.fr}
\author[GA]{Robert Brizzi\fnref{fn2}}
\ead{robert.brizzi@polytechnique.fr}
\author[JFD]{Jean-Fran\c{c}ois Dufr\^eche\fnref{fn2}}
\ead{jean-francois.dufreche@univ-montp2.fr}
\author[AM]{Andro Mikeli\'c\corref{cor1}\fnref{fn2}}
  \ead{mikelic@univ-lyon1.fr}
\cortext[cor1]{Corresponding author}
\author[AP]{Andrey Piatnitski\fnref{fn2}}
\ead{andrey@sci.lebedev.ru}
\fntext[fn1]{G. A. is a member of the DEFI project at INRIA Saclay Ile-de-France.}
\fntext[fn2]{This research was partially supported by the project DYMHOM (De la dynamique mol\'eculaire, via l'homog\'en\'eisation, aux mod\`eles macroscopiques de poro\'elasticit\'e et \'electrocin\'etique) from the program NEEDS (Projet f\'ed\'erateur Milieux Poreux MIPOR),
GdR MOMAS and GdR PARIS.  The authors would like to thank O. Bernard, V. Marry, P. Turq and B. Rotenberg from the laboratory Physicochimie des Electrolytes, Colloides et Sciences Analytiques (PECSA), UMR CNRS 7195, Université P. et M. Curie, for helpful discussions.}
\address[GA]{ CMAP, Ecole Polytechnique,
F-91128 Palaiseau, France}
\address[JFD]{Universit\'e de Montpellier 2, Laboratoire Mod\'elisation M\'esoscopique et Chimie Th\'eorique (LMCT),
Institut de Chimie S\'eparative de Marcoule ICSM
UMR 5257, CEA / CNRS / Universit\'e de Montpellier 2 / ENSCM
Centre de Marcoule, B\^at. 426 BP 17171,
30207 Bagnols sur C\`eze Cedex, France}
\address[AM]{Universit\'e de Lyon, Lyon, F-69003, France; Universit\'e
Lyon 1, Institut Camille Jordan, UMR 5208, B\^at. Braconnier,  43, Bd du 11
novembre 1918, 69622 Villeurbanne Cedex,
France}
\address[AP]{ Narvik University College, Postbox 385, Narvik 8505, Norway;  Lebedev Physical Institute, Leninski prospect 53, 119991, Moscow, Russia }
%\date{\today}

% \maketitle

%\date{\today}

%\maketitle

%%%%%%%%%%%%%%%%%%%%%%%%%%%%%%%%%%%%%%%%%%%%%%%%%%%%%%%%%%%%%%%%%%%%
\begin{abstract}
This paper is devoted to the homogenization (or upscaling) of a system of
partial differential equations describing the non-ideal transport of a N-component
electrolyte in a dilute Newtonian solvent through a rigid porous medium.
Realistic non-ideal effects are taken into account by an approach based on the mean spherical approximation
(MSA) model which takes into account finite size ions and screening effects.
We first consider equilibrium solutions in the absence of external forces.
In such a case, the velocity and diffusive fluxes vanish and the equilibrium
electrostatic potential is the solution of a variant of Poisson-Boltzmann
equation coupled with algebraic equations. Contrary to the ideal case,
this nonlinear equation has no monotone structure. However, based on
invariant region estimates for Poisson-Boltzmann equation and for small
characteristic value of the solute packing fraction, we prove existence
of at least one solution. To our knowledge this existence result is new
at this level of generality.
When the motion is governed by a small static electric
field and a small hydrodynamic force, we generalize O'Brien's argument
to deduce a linearized model. Our second main result is the rigorous
homogenization of these linearized equations and the proof that the
effective tensor satisfies Onsager properties, namely is symmetric
positive definite. We eventually make numerical comparisons with the
ideal case. Our numerical results show that the MSA model confirms
qualitatively the conclusions obtained using the ideal model but there are
quantitative differences arising that can be important at high charge or high concentrations.
\end{abstract}
 	%Magnetic and electrical properties (see also 91.25.F- Rock and mineral magnet
%\maketitle

\begin{keyword}  Modified Boltzmann-Poisson equation \sep MSA \sep homogenization \sep electro-osmosis
%% keywords here, in the form: keyword \sep keyword

%% PACS codes here, in the form:
\PACS  02.30.Jr \sep % Partial differential equations
47.61.Fg  \sep 	%Flows in micro-electromechanical systems (MEMS) and nano-electromechanical systems (NEMS)
 47.56.+r \sep 	%Flows through porous media
 47.57.J- \sep 	%Colloidal systems
 47.70.Fw \sep 	%Chemically reactive flows (see also 83.80.Jxin rheology)
 47.90.+a \sep 	%Other topics in fluid dynamics (restricted to new topics in section 47)
 82.70.Dd \sep %	Colloids
 91.60.Pn

%% MSC codes here, in the form: \MSC code \sep code
%% or \MSC[2008] code \sep code (2000 is the default)
%\MSC 35Q35 \sep 35B27 \sep 76S05 \sep 76D10 \sep 65Z05
\end{keyword}

\end{frontmatter}

%%%%%%%%%%%%%%%%%%%%%%%%%%%%%%%%%%%%%%%%%%%%%%%%%%%%%%%%%%%%%%%%%%%%

%%%%%%%%%%%%%%%%%%%%%%%%%%%%%%%%%%%%%%%%%%%%%%%%%%%%%%%%%%%%%%%%%%%%
%%%%%%%%%%%%%%%%%%%%%%%%%%%%%%%%%%%%%%%%%%%%%%%%%%%%%%%%%%%%%%%%%%%%
\section{Introduction}

The quasi-static transport of an electrolyte through an electrically charged porous medium is an important and well-known multiscale problem in geosciences and porous materials modeling. An $N$-component electrolyte is a dilute solution of $N$ species of charged particles, or ions, in a fluid which saturates a rigid charged porous medium \cite{Lyklema}. The macroscopic dynamics of such a system is controlled by several phenomena. First the global hydrodynamic flow, which is commonly modelled by the Darcy's law depends on the geometry of the pores and also on the charge distributions of the system. Second, the migration of ions because of an electric field can be quantified by the conductivity of the system. Third, the diffusion motion of the ions is modified by the interaction with the surfaces, but also by the interactions between the solute particles. Lastly, electrokinetic phenomena are due to the {\bf electric double layer} (EDL) which is formed as a result of the interaction of the electrolyte solution which neutralizes the charge of the solid phase at the pore solid-liquid interface. Thus, an external electric field can generate a so-called electro-osmotic flow and reciprocally, when a global hydrodynamic flow is applied, an induced streaming potential is created in the system.

The EDL can be split into several parts, depending on the strength of the electrostatic coupling. There is a condensed layer of ions of typical size $l_G$ for which the attraction energy with the surface $e \Sigma/\mathcal{E}l_G$ (with $\Sigma$ the surface charge and $e$ the elementary charge) is much more than the thermal energy $k_BT$ (with $k_B$ Boltzmann's constant and $T$ the temperature). The corresponding characteristic length $l_G=\mathcal{E}k_B T/\Sigma e$ (Gouy length) is typically less than one nanometer. Consequently, the layer of heavily adsorbed ions practically depends on the molecular nature of the interface and it is generally known as the {\bf Stern layer}. After the Stern layer the {\bf electrostatic diffuse layer} or {\bf Debye's layer} is formed, where the ion density varies. The EDL is the union of Stern and diffuse layers. The thickness of the diffuse layer is predicted by the Debye length $\lambda_D$ which depends on the electrolyte concentration. For low to moderate electrolyte concentrations $\lambda_D$ is in the nanometric range. Outside Debye's layer, in the remaining bulk fluid, the solvent can be considered as electrically neutral.

The large majority of theoretical works are concerned with a simple (so-called {\bf ideal}) model for which the departure of ideality of ions are neglected (see later in this introduction a precise definition of ideality). Thus the macroscopic descriptions of charged porous media such as the ones using finite element methods \cite{AM:03}, homogenization approaches \cite{MM:02} or lattice-Boltzmann methods \cite{Rotenberg} are commonly based on the Poisson-Nernst-Planck formalism for which the local activity coefficients of ions are neglected and the transport properties are modelled solely from the mobility at infinite dilution. In addition, the boundary condition for the electrostatic interaction between the two phases is very often simplified by replacing the bare surface charge $\Sigma$, which corresponds to the chemistry of the system, by surface potential $\Psi$. Its boundary value at the no slip plane is known as the zeta potential $\zeta$. In fact, it is rather the surface charge density $\Sigma$, proportional to the normal derivative of $\Psi$, than $\zeta$, which is the relevant parameter (this is confirmed by an asymptotic analysis in \cite{PoissBoltz:12}).

A few studies do not model the details of the EDL. Under the presence of an external electric field $\mathbf{E}$, the charged fluid may acquire a plug electro-osmotic flow velocity which is proportional to $\mathbf{E} \zeta$ and given by the so-called Smoluchowski's formula. In the case of porous media with large pores, the electro-osmotic effects are modeled by introducing an effective slip velocity at the solid-liquid interfaces, which comes from the Smoluchowski formula. In this setting, the effective behavior of the charge transport through spatially periodic porous media was studied by Edwards in \cite{E:95}, using the volume averaging method. These methods for which the transport beyond the EDL is uncoupled from the one in the EDL are not valid for numerous systems, such as clays because the characteristic pore size is also of the order of the EDL size (a few hundreds of nanometers or even less). Therefore the Debye's layer fills largely the pores and its effect cannot anymore be modeled by an effective slip boundary condition at the liquid-solid interface.

In the present paper, we consider continuum equations (such as the Navier-Stokes or the Fick equations) as the right model for the description of porous media at the pore scale where the EDL phenomena and the pore geometry interact. The typical length scale for which these continuous approach are valid is confirmed to be both experimentally (see e.g. \cite{CH:85}) and theoretically \cite{Osmo1,Osmo2} close to 1 nanometer. Therefore, we consider continuum equations at the microscopic level and, more precisely, we couple the incompressible Stokes equations for the fluid with the electrokinetic model made of a global electrostatic equation and one convection-diffusion equation for each type of ions.

The most original ingredient of the model is the treatment of the departure from ideality. Electrolyte solutions are not ideal anymore as fas as the ion concentration is not dilute \cite{Barthel}. Typically simple 1-1 electrolyte, such as NaCl in water have an activity coefficient which is close to 0.6 at molar concentrations {  (while it is equal to 1 by definition in the ideal case)} and the non ideality effects is even more important for the transport coefficients \cite{Dufreche05,VanDamme}. Thus any ideal model can only be in semi-quantitative agreement with a more rigorous model if departure from ideality are neglected. In the present article, we use a new approach based on the Mean Spherical Approximation (MSA), for which the ions are considered to be charged hard spheres \cite{Hansen,Blum77}. This model is able to describe the properties of the solutions up to molar concentrations. In addition, a generalization of the Fuoss-Onsager theory based on the Smoluchowski equation has been developped \cite{EbelMSA,BernardMSA,Dufreche05,DufrecheDB,Dufreche02,VanDamme} by taking into account this model, and it is possible to predict the various transport coefficients of bulk electrolyte solutions up to molar concentrations. This MSA transport equations extend the well known Debye-Huckel-Onsager limiting law to the domain of concentrated solutions. They have also been proved to be valid \cite{Jardat} for confined solutions in the case of clays by comparing their predictions to molecular and Brownian dynamics simulations.

A more detailed, mathematically oriented, presentation of the fundamental concepts of electroosmotic flow in nanochannels can be found in the book \cite{KBA:05} by Karniadakis et al., pages 447-470, from which we borrow the notations and definitions in this introduction.
We now describe precisely our stationary model, describing at the pore scale the electro-chemical interactions of an $N$-component electrolyte in a dilute Newtonian solvent.
All quantities are given in {\bf SI units}. We start with the following mass conservation laws
\begin{gather}
\mbox{ div } \bigg( {\bf j}_i + \mathbf{u} n_i \Big) =0 \quad \mbox{in } \quad \Omega_p ,\quad i=1, \dots , N,
\label{AM7}
\end{gather}
where $\O_p$ is the pore space of the porous medium $\O$, $i$ denotes the solute species,
$\mathbf{u}$ is the hydrodynamic velocity and $n_i$ is the $i$th species concentration.
For each species $i$, $\mathbf{u} n_i $ is its convective flux and ${\bf j}_i$ its
migration-diffusion flux.

The solute velocity is given by the incompressible Stokes equations
with a forcing term made of an exterior hydrodynamical force $\mathbf{f}$
and of the electric force applied to the fluid thanks to the charged species
\begin{gather}
\eta \Delta \mathbf{u} = \mathbf{f} + \nabla p +  e \sum^N_{j=1} z_j n_j \nabla \Psi \qquad \mbox{in } \quad \Omega_p ,
\label{AM5} \\
\mbox{ div } \mathbf{u} =0 \qquad \mbox{in } \quad \Omega_p ,
\label{AM6} \\ \mathbf{u} =0 \qquad \mbox{on } \quad \p \Omega_p \setminus \p \O,
\label{AM8}
\end{gather}
where $\eta>0$ is the shear viscosity, %$\mathbf{f}$ is the external body force,
$p$ is the pressure, $e$ is the elementary charge, $z_i$ is the charge number
of the species $i$ and $\Psi$ is the electrostatic potential. The pore space boundary
is $\p \O_p$ while $\p \O$ is the outer boundary of the porous medium $\O$.
On the fluid/solid boundaries $\p \Omega_p \setminus \p \O$ we assume the
no-slip boundary condition (\ref{AM8}). {  For simplicity, we shall assume
that $\Omega$ is a rectangular domain with periodic boundary conditions
on $\p \O$. Furthermore, in order to perform a homogenization process, we assume that the pore distribution is periodic in $\O_p$. }

We assume that all valencies $z_j$ are different. If not, we lump together
different ions  with the same valency. Of course, for physical reasons,
all valencies $z_j$ are integers. We rank them by increasing order and
we assume that they are both anions and cations, namely positive and negative
valencies,
\begin{equation}
\label{valence}
z_1 < z_2 < ... < z_N , \quad z_1<0<z_N ,
\end{equation}
and we denote by $j^+$ and $j^-$ the sets of positive and negative valencies.

The migration-diffusion flux ${\bf j}_i$ is given by the following linear relationship
\begin{gather}
{\bf j}_i = -\sum_{j=1}^N  L_{ij}  (n_1 , \dots , n_N ) \big( \nabla \mu_j
+ z_j e \nabla \Psi \big), \quad  i=1, \dots , N  ,
\label{electroflux}
\end{gather}
where $ L_{ij}  (n_1 , \dots , n_N ) $ is the Onsager coefficient between $i$ and $j$ and $\mu_j$ is the chemical potential of the species $j$ given by
\begin{gather}
\mu_j = \mu_j^0 + k_B T \ln n_j + k_B T \ln \gamma_j (n_1 , \dots , n_N ) , \quad  j=1, \dots , N , \label{Chempot}
\end{gather}
with $\gamma_j$ being the {\bf activity coefficient} of the species $j$,
$k_B$ is the Boltzmann constant, $\mu_j^0$ is the standard chemical potential
expressed at infinite dilution and $T$ is the absolute temperature.
{   The sum of all fluxes ${\bf j}_i$ is not zero because the solvent is
not considered here and ${\bf j}_i$ is a particle flux.}
The Onsager tensor $L_{ij}$ is made of the linear Onsager coefficients
between the species $i$ and $j$. It is symmetric and positive definite.
Furthermore, on the fluid/solid interfaces a no-flux condition holds true
\begin{equation}
\label{AM9}
\mathbf{j}_i \cdot \nu =0 \quad \p \Omega_p \setminus \p \O, \quad i=1, \dots , N.
\end{equation}
The electrostatic potential is calculated from Poisson equation with the
electric charge density as bulk source term
\begin{gather}
\mathcal{E} \Delta \Psi = - e \sum^N_{j=1} z_j n_j \qquad \mbox{in } \quad \Omega_p ,
\label{AM3} \end{gather}
where $\mathcal{E} = \mathcal{E}_0 \mathcal{E}_r$ is the dielectric constant
of the solvent. The surface charge $\Sigma$ is assumed to be given at the
pores boundaries, namely the boundary condition reads
\begin{gather}
\mathcal{E} \nabla \Psi \cdot \nu =-\Sigma \qquad \mbox{on } \quad \p \Omega_p \setminus \p \O,
\label{AM4}
\end{gather}
where $\nu$ is the unit exterior normal to $\Omega_p$.

The activity coefficients $\gamma_i$  and the Onsager coefficients $L_{ij}$
depend on the electrolyte. At infinite dilution the solution can be considered
{\bf ideal} and we have $\gamma_i =1$ and $L_{ij} = \delta_{ij} n_i D_i^0 / (k_B T)$,
where $D^0_i>0$ is the diffusion coefficient of species $i$ at infinite dilution.
At finite concentration, these expressions which correspond to the Poisson-Nernst-Planck
equations are not valid anymore. {\bf Non-ideal effects} modify the ion transport
and they are to be taken into account if quantitative description of the system
is required. Different models can be used. Here we choose the {\bf Mean Spherical
Approximation (MSA)} in simplified form \cite{Dufreche05} which is valid if the
diameters of the ions are not too different. The activity coefficients read
\begin{gather}
\ln \gamma_j = -\frac{L_B \Gamma z_j^2}{1+ \Gamma \sigma_j}+ \ln \gamma^{HS} , \quad j=1, \dots , N ,
\label{gamma-j}
\end{gather}
where {  $\sigma_j$ is the $j$-th ion diameter,}
$L_B$ is the Bjerrum length given by $L_B = e^2 / (4\pi \mathcal{E} k_B T)$,
{  $\gamma^{HS}$ is the hard sphere term defined by (\ref{Hardsphere}),}
and $\Gamma$ is the {\bf MSA screening parameter} defined by
\begin{gather}
\Gamma^2 = \pi L_B \sum_{k=1}^N \frac{n_k z_k^2}{(1+ \Gamma \sigma_k )^2} .
\label{Gamma}
\end{gather}
For dilute solutions, i.e., when all $n_j$ are small, we have
$$
2 \Gamma \approx \kappa = \frac{1}{\lambda_D} \quad \mbox{ with } \quad
\lambda_D = \dsp \sqrt{\frac{\mathcal{E} k_B T}{ e^2 \sum_{k=1}^N n_k z_k^2 }} ,
$$
where $\lambda_D$ is the Debye length. Thus, $1/2\Gamma$ generalizes $\lambda_D$
at finite concentration and it represents the size of the ionic spheres when the
ion diameters $\sigma_i$ are different from zero.
(In the sequel we shall use a slightly different definition
of the Debye length, relying on the notion of characteristic
concentration, see Table \ref{Data}.)
In (\ref{gamma-j}) $\gamma^{HS}$ is the hard sphere term which is independent of
the type of species and is given by
\begin{equation}
\label{Hardsphere}
\ln \gamma^{HS} = p(\xi) \equiv \xi \frac{8-9\xi + 3\xi^2}{(1-\xi)^3} ,
\quad \mbox{ with } \quad \xi = \frac{\pi}{6} \sum_{k=1}^N n_k \sigma_k^3  ,
\end{equation}
where $\xi$ is the solute packing fraction.

The Onsager coefficients $L_{ij}$ are given by
\begin{gather}
L_{ij}  (n_1 , \dots , n_N )  = n_i \bigg( \frac{D_i^0}{k_B T} \delta_{ij} + {\bf\Omega}_{ij} \bigg)
\bigg( 1+ \mathcal{R}_{ij} \bigg), \; i,j =1, \dots , N,
\label{Offdiag}
\end{gather}
where ${\bf\Omega}_{ij} = {\bf\Omega}_{ij}^c + {\bf\Omega}_{ij}^{HS}$ stands for the
hydrodynamic interactions in the MSA formalism and there is no summation for repeated
indices in (\ref{Offdiag}). It is divided into two terms: the Coulombic part is
\begin{gather}
{\bf\Omega}_{ij}^{c}=-\frac{1}{3\eta}\frac{z_{i}z_{j}L_{B} n_{j}}
{(1+\Gamma\sigma_{i})(1+\Gamma\sigma_{j})\left(\Gamma+{\displaystyle \sum_{k=1}^N n_{k}
\frac{\pi L_{B} z_{k}^{2}\sigma_{k}}{(1+\Gamma\sigma_{k})^{2}}}\right)} ,
\label{Omegac}
\end{gather}
and the hard sphere part is
\begin{gather}
{\bf\Omega}_{ij}^{HS}=-\frac{\left(\sigma_{i}
+\sigma_{j}\right)^{2}}{12 \eta} n_{j}\frac{1-\tilde{X}_{3}/5
+(\tilde{X}_{3})^{2}/10}{1+2\tilde{X}_{3}},
\label{OmegaHS}
\end{gather}
with
\begin{gather}
\tilde{X}_{3}=\frac{\pi}{6}\sum_{i=1}^N n_{i} \frac{3X_{1}X_{2} +X_{3} X_0}{4X_{0}^2}   \quad  \mbox{and } \; X_{k}=\frac{\pi}{6}\sum_{i=1}^N n_{i} \sigma_{i}^{k} .
\label{Diag}
\end{gather}
In (\ref{Offdiag}) $\mathcal{R}_{ij}$ is the electrostatic relaxation term given by
\begin{gather}
     \mathcal{R}_{ij}=
\frac{ \kappa_q^2 e^2  z_i z_j   }{3 \mathcal{E} k_B T (\sigma_{i} +\sigma_j ) ( 1 + \Gamma \sigma_i ) ( 1 + \Gamma \sigma_j ) } \;\frac{  1 - e^{ - 2 \kappa_{q}  (\sigma_{i} +\sigma_j )} } { \kappa_{q}^2 + 2 \Gamma \kappa_{q} + 2 \Gamma^2
  - 2 \pi L_B \displaystyle \sum_{k=1}^N n_k
\frac{z_{k}^{2} e^{- \kappa_{q}\sigma_k}}{(1+\Gamma\sigma_{k})^{2}} }
\label{Relax}
\end{gather}
where $\kappa_q>0$ is defined by
\begin{gather}
\kappa_q^2 =\frac{e^2}{\mathcal{E} k_{\mathrm{B}} T}
\frac{\sum_{i=1}^N n_i z_i^2 D_i^{0}}{\sum_{i=1}^N D_i^{0}} .
\label{Relaxk}
\end{gather}
All these coefficients $\gamma_j, \Gamma, {\bf\Omega}_{ij}, \mathcal{R}_{ij}$
are varying in space since they are functions of the concentrations $n_j$.
The $N\times N$ tensor $(L_{ij})$ is easily seen to be symmetric. However,
to be coined "Onsager tensor" it should be positive too, which is not
obvious from the above formulas. The reason is that they are only approximations
for not too large concentrations. Nevertheless, when the concentrations $n_j$
are small, all entries $L_{ij}$ are first order perturbations of the ideal
values $\delta_{ij} n_i D_i^0 / (k_B T)$ and thus the Onsager tensor is
positive at first order.
The various parameters appearing in (\ref{AM7})-(\ref{Relaxk}) are defined in Table \ref{Data}.

\begin{table}[ht]
%\footnotesize
\centerline{\begin{tabular}{|l|l|l|} \hline\hline
\emph{} & \emph{ QUANTITY }   &
{\emph{CHARACTERISTIC VALUE}}
  \\
\hline   e &  electron charge  &  $1.6$e$-19$ C (Coulomb) \\
\hline    $D_i^0$ & diffusivity of the $i$th species  & $D_i^0 \in (1.333 , 2.032)$e$-09 \, m^2/s$ \\
\hline    $k_B$  &  Boltzmann constant   & $1.38$e$-23 \, J/K$  \\
\hline    $n_c$   & characteristic concentration  & $( 6.02 \, 10^{24}, 6.02 \, 10^{26})$ particles$/ m^3$ \\
\hline    $T$  & temperature   & $293^\circ K$ (Kelvin)\\
\hline    $\mathcal{E}$  &   dielectric constant  &  $6.93$e$-10\, C/(mV)$   \\
\hline  $\eta$   & dynamic viscosity    & $1$e$-3\, kg/(m \, s)$ \\
\hline   $\ell$   & pore size    & $5$e$-9$ m \\
\hline    $\lambda_D$  & Debye's length    & $\dsp \sqrt{\mathcal{E} k_B T / (e^2 n_c)} \in (0.042 , 0.42 )$ nm \\
\hline     $z_j$   &  $j$-th electrolyte valence    &  given integer\\
\hline    $\Sigma$   & surface charge density     & $0.129 C/m^2$ (clays)  \\
\hline    $\mathbf{f}$  &  given applied force & $N/m^3$ \\
\hline    $\sigma_j$  & $j$-th   hard sphere diameter  & $2$e$-10$ m \\
\hline    $\Psi_c$   & characteristic electrokinetic potential &  $0.02527$ V (Volt) \\
\hline    $L_B$  &   Bjerrum length  & $7.3$e$-10$ m \\
 \hline
\end{tabular}
} \caption{{\it Data  description}\label{Data}}
\end{table}

{  As already said we consider a rectangular domain
$\O = \prod_{k=1}^d(0,L_k)^d$ ($d=2,3$ is the space dimension),
$L_k>0$ and at the outer boundary $\p \Omega$ we set}
\begin{equation}\label{BC1}
    \Psi + \Psi^{ext}(x)  \ , \, n_i \ ,  \, \mathbf{u}
 \, \mbox{ and } \, p
 \, \mbox{ are } \, \Omega-\mbox{periodic.}
\end{equation}
The applied exterior potential $\Psi^{ext}(x)$ can typically be linear,
equal to $\mathbf{E}\cdot \x$, where $\mathbf{E}$ is an imposed electrical field.
Note that the applied exterior force $\mathbf{f}$ in the Stokes equations (\ref{AM5})
can also be interpreted as some imposed pressure drop or gravity force.
Due to the complexity of the geometry and of the equations, it is necessary
for engineering applications to upscale the system (\ref{AM7})-(\ref{AM4})
and to replace the flow equations with a Darcy type law, including electro-osmotic effects.

It is a common practice to assume that the porous medium has a periodic microstructure.
For such media, and in the ideal case, formal two-scale asymptotic analysis of system
(\ref{AM7})-(\ref{AM4}) has been performed in many previous papers. Many of these works
rely on a preliminary linearization of the problem which is first due to O'Brien et al.
\cite{OBW:78}. Let us mention in particular the work of Looker and Carnie in \cite{LC:06}
that we rigorously justify in \cite{AMP} and for which we provided numerical experiments
in \cite{ABDMP}. Other relevant references include \cite{AM:03}, \cite{A:01}, \cite{AS:81}, \cite{CSTA:96}, \cite{GCA:06},  \cite{MSA:01}, \cite{MM:02}, \cite{MM:03}, \cite{MM:06}, \cite{MM:06a}, \cite{MM:08}, \cite{RPA:06}, \cite{Rayetal:11}, \cite{Rayetal:12}, \cite{schmuck}.

Our goal here is to generalize these works for the non-ideal MSA model.
More specifically, we extend our previous works \cite{AMP}, \cite{ABDMP}
and provide the homogenized system for a linearized version of
(\ref{AM7})-(\ref{AM4}) in a rigid periodic porous medium (the linearization
is performed around a so-called equilibrium solution which satisfies the
full nonlinear system (\ref{AM7})-(\ref{AM4}) with vanishing fluxes).
{
The homogenized system is an elliptic system of $(N+1)$ equations
$$
- \div_x \mathcal{M} \nabla (p^0, \{\mu_j\}_{1\leq j\leq N}) = \mathcal{S} \quad \mbox{ in } \; \O ,
$$
where $p^0$ is the pressure, $\mu_j$ the chemical potential of the $j$-th species,
$\mathcal{M}$ the Onsager homogenized tensor and $\mathcal{S}$ a source term.
The $(N+1)$ equations express the conservation of mass for the fluid and the
$N$ species. More details will be given in Section \ref{Passlimit}.}

In Section \ref{sec2} we provide a dimensionless version of system (\ref{AM7})-(\ref{AM4}).
We also explain in Lemma \ref{lem.ideal} how the ideal case can be recovered
from the non-ideal MSA model in the  limit of small characteristic value
of the solute packing fraction.
Section \ref{sec3} is concerned with the definition of so-called equilibrium solutions
when the external forces are vanishing (but not the surface charge $\Sigma$).
Computing these equilibrium solutions amounts to solve a MSA variant of the
nonlinear Poisson-Boltzmann equation for the potential. Existence of a solution
to such a Poisson-Boltzmann equation is established in Section \ref{secexist}
under a smallness assumption for a characteristic value of the solute packing fraction.
To our knowledge this existence result is the first one at this level of generality.
Let us mention nevertheless that, in a slightly simpler setting (two species only
and a linear approximation of $p(\xi)$) and with a different method (based on
a saddle point approach in the two variables $\Psi$ and $\{n_j\}$), a previous
existence result was obtained in \cite{EJL}.
In Section \ref{sec4} we give a linearized version of system (\ref{AM7})-(\ref{AM4}).
We generalize the seminal work of O'Brien et al. \cite{OBW:78}, which was devoted
to the ideal case, to the present setting of the MSA model. Under the assumption
that all ions have the same diameter $\sigma_j$ we establish in Proposition \ref{prop.linear}
and Lemma \ref{lem.existvf} that the linearized model is well-posed and that
its solution satisfies uniform a priori estimates. This property is crucial
for homogenization of the linearized model which is performed in Section \ref{Passlimit}.
Following our work \cite{AMP} in the ideal case, we rigorously obtained
the homogenized problem in Theorem \ref{1.15} and derive precise formulas
for the effective tensor in Proposition \ref{prop.eff}.
Furthermore we prove that the so called {\bf Onsager relation} (see e.g. \cite{Ons})
is satisfied, namely the full homogenized tensor $\mathcal{M}$ is symmetric positive definite.

Eventually Section \ref{secnum} is devoted to a numerical study of the obtained homogenized
coefficients, including their sensitivities to various physical parameters and a systematic
comparison with the ideal case.

\section{Non-dimensional form}
\label{sec2}

Before studying its homogenization, we need a {\bf dimensionless} form of the equations
(\ref{AM7})-(\ref{AM4}). We follow the same approach as in our previous works \cite{AMP},
\cite{ABDMP}.
The known data are the characteristic pore size $\ell$, {  the characteristic
domain size $L$,} the surface charge density $\Sigma$
(having the characteristic value $\Sigma_c$), the static electrical potential $\Psi^{ext}$ and
the applied fluid force $\mathbf{f}$. As usual, we introduce a small parameter $\ep$ which
is the ratio between the pore size and the medium size, $\ep = \ell / L <<1$.

We are interested in characteristic concentrations $n_c$ taking on typical values in the range
$(10^{-2}, 1)$ in Mole/liter, that is $( 6.02 \, 10^{24}, 6.02 \, 10^{26})$ particles per $m^{3}$.
From Table \ref{Data}, we write
$\lambda_D= \sqrt{\mathcal{E} k_B T / (e^2 n_c)}$ and we find out that $\lambda_D$
varies in the range $(0.042 , 0.42 )$ nm.

Following \cite{KBA:05}, we introduce the characteristic potential $\zeta= k_B T/ e$
and the  parameter $\beta$ related  to the Debye-H\"uckel parameter $\kappa = 1/ \lambda_D$, as follows
\begin{equation}
\label{def.beta}
\beta = \left(\frac{\ell}{\lambda_D}\right)^2 .
\end{equation}
{  Next we rescale the space variable by setting $x'=x/L$
and $\Omega'=\Omega/L=\prod_{k=1}^d(0,L'_k)^d$ (we shall drop the primes
for simplicity in the sequel). The rescaled dimensions $L'_k$ are assumed
to be independent of $\ep$.
Similarly, the pore space becomes $\Oe= \O_p /L$ which is a periodically
perforated domain with period $\ep$.}
{   Still following \cite{KBA:05}, we define other characteristic quantities}
$$
\Gamma_c =\sqrt{\pi L_B n_c } , \quad
p_c = n_c k_B T , \quad u_c = \ep^2 \frac{k_B T n_c L}{\eta }  ,
$$
{   where $\Gamma_c$, in terms of $n_c$, is deduced from
(\ref{Gamma}), $p_c$ is a pressure equilibrating the electrokinetic forces in
(\ref{AM5}) and $u_c$ is the velocity corresponding to a Poiseuille
flow in a tube of diameter $\ell$, length $L$ and pressure drop $p_c$.
We also introduce adimensionalized forcing terms}
$$
\Psi^{ext,*}= \frac{e\Psi^{ext}}{k_B T} , \quad
\mathbf{f}^* = \frac{\mathbf{f} L}{p_c} , \quad
\Sigma^* = \frac{\Sigma}{\Sigma_c} , \quad
N_\sigma= \frac{e \Sigma_c \ell}{\mathcal{E} k_B T}  ,
$$
and adimensionalized unknowns
$$
\Gamma^\ep = \frac{\Gamma}{\Gamma_c} , \;
p^\ep = \frac{p}{p_c} , \; \mathbf{u}^\ep =\frac{\mathbf{u}}{u_c} , \;
\Psi^\ep = \frac{e\Psi}{k_B T} , \; n_j^\ep = \frac{n_j}{n_c} , \;
{\bf j}_j^\ep = \frac{{\bf j}_j L}{n_c D^0_j} .
$$
The dimensionless equations for hydrodynamical and electrostatic part are thus
\begin{gather}
\ep^2 \Delta \mathbf{u}^\ep - \nabla p^\ep = \mathbf{f}^* +
\sum_{j=1}^N z_j n_j^\ep (x) \nabla\Psi^\ep   \ \mbox{ in } \Oe , \label{EPPR1} \\
\mathbf{u}^\ep =0 \ \mbox{ on } \, \p \Oe \setminus \p\O , \quad
\div \ \mathbf{u}^\ep =0 \quad \mbox{in } \, \Oe, \label{EPPR6}\\
 -\ep^2 \Delta \Psi^\ep = \beta \sum_{j=1}^N  z_j n_j^\ep (x) \quad \mbox{in } \ \Oe ;  \label{EPPR3b}\\
\ep \nabla \Psi^\ep \cdot \nu = -N_\sigma \Sigma^*  \; \mbox{ on } \, \partial\Oe \setminus \partial\O , \label{EPPR2} \\
( \Psi^\ep + \Psi^{ext,*} ) , \quad
\mathbf{u}^\ep \ \mbox{ and } \ p^\ep \quad \mbox{are } \ \Omega-\mbox{periodic in } \; x . \label{EPPR6bc}
\end{gather}
{   (Recall that $\Omega=\prod_{k=1}^d(0,L_k)^d$ so that periodic
boundary conditions make sense for such a rectangular domain.)}
Furthermore, from (\ref{gamma-j}) and (\ref{Gamma}) we define
\begin{gather}
\gamma_j^\ep = \gamma^{HS}_\ep \exp \{ - \frac{L_B \Gamma^\ep \Gamma_c z^2_j}{(1+\Gamma^\ep \Gamma_c \sigma_j)} \} \quad \mbox{ and } \quad (\Gamma^\ep)^2 =
\sum_{k=1}^N \frac{n^\ep_k z_k^2}{(1+ \Gamma_c \Gamma^\ep \sigma_k )^2}.
\label{gamma-eps}
\end{gather}
The solute packing fraction $\xi$ is already an adimensionalized quantity
(taking values in the range $(0,1)$). However, introducing a characteristic
value $\xi_c$ we can adimensionalize its formula (\ref{Hardsphere}) as
\begin{equation}
\label{xic}
\xi_c = \frac{\pi}{6} n_c \sigma_c^3 , \quad
\xi = \xi_c \sum_{k=1}^N n^\ep_k (\frac{\sigma_k}{\sigma_c})^3 ,
\end{equation}
where $\sigma_c$ is the characteristic ion diameter.
We note that $\Gamma_c \in (0.117, 1.17) \, 10^9$ m$^{-1}$,
$\Gamma_c \sigma_c \in (0.023, 0.23)$,
$L_B \Gamma_c \in (0.0857 , 0.857)$ and
$\xi_c \in (0.252, 25.2)\, 10^{-4}$ which is a small parameter.
Concerning ${\bf\Omega}^c_{ij}$ which has to be compared with $D^0_i/(k_B T)$, we find out that
$$
L_B n_c k_B T /(3 \eta \Gamma_c D^0_i ) = \Gamma_c k_B T /(3 \pi \eta  D^0_i ) \in (0.005415 , 0.5415) ,
$$
while ${\bf\Omega}^{HS}_{ij}$ is slightly smaller and $\mathcal{R}_{ij}$ looks negligible.
Concerning the transport term, the Peclet number for the $j$-th species is
$$
\pej = \frac{u_c L}{D^0_j}= \frac{\ell^2 k_B T n_c}{\eta D_j^0} \in (0.01085 , 1.085) .
$$
After these considerations we obtain the dimensionless form of equation (\ref{AM7}):
\begin{gather}
\div \bigg( {\bf j}_i^\ep + \pei n_i^\ep \mathbf{u}^\ep \bigg) =0
\quad \mbox{ in } \quad \Oe , \; i =1, \dots , N , \label{Nernst1}\\
{\bf j}_i^\ep \cdot \nu = 0  \; \mbox{ on } \, \partial\Oe \setminus \partial\O ,
 \; i =1, \dots , N , \label{Nernst1bc} \\
{\bf j}_i^\ep = -\sum_{j=1}^N n_i^\ep K_{ij}^\ep \nabla M_j^\ep \quad  \mbox{ and } \quad
M_j^\ep = \ln \left(n_j^\ep \gamma_j^\ep e^{z_j \Psi^\ep } \right) , \label{electroflux2}\\
K_{ij}^\ep = \bigg( \delta_{ij} + \frac{k_B T}{D_i^0} {\bf\Omega}_{ij} \bigg) \bigg( 1+ \mathcal{R}_{ij} \bigg), \; i,j =1, \dots , N . \label{OffdiagSD}
\end{gather}

{  Eventually the porous medium $\Omega^\ep$ is assumed to be an
$\ep$-periodic smooth open subset of $\Omega=\prod_{k=1}^d(0,L_k)^d$ and $L_k /\ep $ are integers for every $k$ and every $\ep$.
It is built from $\Omega$ by removing a periodic distributions of
solid obstacles which, after rescaling by $1/\ep$, are all similar
to the unit obstacle $Y_S$.}
More precisely, we consider a smooth partition of the unit periodicity
cell $Y=Y_S \cup Y_F$ where $Y_S$ is the solid part and $Y_F$ is the
fluid part. The liquid/solid interface is $S=\partial Y_S \setminus \p Y$.
The fluid part is assumed to be a smooth connected open subset (no
assumption is made on the solid part). We define $Y_\ep^j=\ep (Y_F +j)$
and $\Omega^\ep = \bigcup\limits_{j\in\mathbb Z^d} Y_\ep^j \cap \O$.

We also assume a periodic distribution of charges $\Sigma^* \equiv \Sigma^* (x/ \ep)$.
This will imply that, at equilibrium (in the absence of other forces), the solution
of system (\ref{EPPR1})-(\ref{OffdiagSD}) is also periodic of period $\ep$.

%%%%%%%%%%%%%%%%%%%%%%%%%%%%%%%%%%%%%%%%%%%%%%%%%%%%%%%%%%%%%%%%%%%%%%%%%%%%%%%%%%%%%%%%

We recall that the ideal model (see e.g. \cite{KBA:05}) corresponds to the following
values of the activity coefficient, $\gamma_i =1$, and of the Onsager tensor
$L_{ij} = \delta_{ij} n_i D_i^0 / (k_B T)$. In view of our previous dimensional
analysis it is interesting to see in which sense the present non-ideal MSA model
is close to the ideal case. We shall make this connection in the limit of a
small parameter going to zero. More precisely we rely on the characteristic value
$\xi_c$ of the solute packing fraction, defined by (\ref{xic}). The smallness of
$\xi_c$ (which means a low concentration, weighted by the ion size)
will be a crucial assumption in Theorem \ref{thm.exist3} that establishes the existence
of equilibrium solutions to the MSA model. It is therefore a natural parameter
to study the limit ideal case.
With this goal in mind we introduce two additional dimensionless numbers:
the Bjerrum's parameter (also called the Landau plasma parameter)
\begin{equation}
bi = \frac{L_B}{\sigma_c}  , \label{bi}
\end{equation}
and the ratio appearing in Stokes' formula for the drag hydrodynamic force
\begin{equation}
S=\frac{k_B T}{\eta D^0_c \sigma_c}  , \label{Sform}
\end{equation}
where $D^0_c$ is the characteristic value for the diffusivities $D^0_i$, $1\leq i\leq N$.
According to the numerical values of Table \ref{Data}, we assume that
\begin{equation}
\label{Assumpordre}
    bi \quad \mbox{and} \quad S \quad \mbox{are of order one}.
\end{equation}
More precisely, it is enough to assume that $bi$ and $S$ are bounded
quantities when $\xi_c$ becomes infinitely small (they can tend to zero too).

\begin{lemma}
\label{lem.ideal}
Under assumption (\ref{Assumpordre}), the ideal case is the limit of our
non-ideal MSA model for small solute packing fraction $\xi_c$, namely
\begin{equation}
K^\ep_{ij} = \delta_{ij} + O(\sqrt{\xi_c}), \quad \mbox{and} \quad
\ln \gamma_j^\ep = O(\sqrt{\xi_c}) .
\label{Approwksi}
\end{equation}
\end{lemma}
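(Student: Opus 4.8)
\noindent The proof is entirely a bookkeeping of orders of magnitude in the small parameter $\xi_c$. The plan is to express every coefficient entering $K^\ep_{ij}$ and $\ln\gamma^\ep_j$ through the dimensionless groups $\xi_c$, $bi$, $S$ and the diffusivity ratios $D^0_c/D^0_i$, and then simply read off the power of $\xi_c$. One keeps in mind that, by (\ref{Assumpordre}), $bi$ and $S$ stay bounded as $\xi_c\to0$, that $D^0_c/D^0_i$ are bounded, and that the dimensionless concentrations $n^\ep_j$, the (integer) valencies $z_j$, the ratios $\sigma_j/\sigma_c$ and the screening parameter $\Gamma^\ep$ (the positive solution of the fixed point equation in (\ref{gamma-eps})) stay bounded above and below by positive constants on the admissible range. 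Eliminating the dimensional constants via $\Gamma_c^2=\pi L_B n_c$, $\xi_c=\tfrac{\pi}{6}n_c\sigma_c^3$, $bi=L_B/\sigma_c$, $S=k_BT/(\eta D^0_c\sigma_c)$ and $e^2/(\mathcal{E}k_BT)=4\pi L_B$, one obtains the key identities
$$
\Gamma_c\sigma_c=\sqrt{6\,bi\,\xi_c},\quad L_B\Gamma_c=\sqrt{6}\,(bi)^{3/2}\sqrt{\xi_c},\quad \frac{L_Bn_c}{\Gamma_c}=\frac{\Gamma_c}{\pi},\quad \kappa_q^2=4\,\Gamma_c^2\,\frac{\sum_i n^\ep_i z_i^2 D^0_i}{\sum_i D^0_i},
$$
so that $\Gamma_c\sigma_c$, $L_B\Gamma_c$ and $\kappa_q\sigma_c$ are all $O(\sqrt{\xi_c})$, while $\kappa_q^2=\Theta(\Gamma_c^2)$ since $z_1<0<z_N$ forces $\sum_i n^\ep_i z_i^2 D^0_i$ to stay away from zero.

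First I would treat the activity coefficient. In (\ref{gamma-eps}) the exponent is $L_B\Gamma^\ep\Gamma_c z_j^2/(1+\Gamma^\ep\Gamma_c\sigma_j)$, whose denominator is $1+O(\sqrt{\xi_c})\ge\tfrac12$ for small $\xi_c$, so the exponent is $O(L_B\Gamma_c)=O(\sqrt{\xi_c})$. The hard sphere factor is $\gamma^{HS}_\ep=e^{p(\xi)}$ with $\xi=\xi_c\sum_k n^\ep_k(\sigma_k/\sigma_c)^3=O(\xi_c)$, hence $p(\xi)=O(\xi_c)=o(\sqrt{\xi_c})$ by (\ref{Hardsphere}). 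Thus $\gamma^\ep_j=1+O(\sqrt{\xi_c})$ and $\ln\gamma^\ep_j=O(\sqrt{\xi_c})$, the second assertion of (\ref{Approwksi}).

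Next I would estimate $K^\ep_{ij}=\big(\delta_{ij}+\tfrac{k_BT}{D^0_i}{\bf\Omega}_{ij}\big)(1+\mathcal{R}_{ij})$ term by term, ${\bf\Omega}_{ij}={\bf\Omega}^c_{ij}+{\bf\Omega}^{HS}_{ij}$. For the Coulombic part, after factoring $\Gamma_c$ out of the denominator in (\ref{Omegac}) (which becomes $\Gamma_c(\Gamma^\ep+O(\sqrt{\xi_c}))$) and using $L_Bn_c/\Gamma_c=\Gamma_c/\pi$, the coefficient $\tfrac{k_BT}{D^0_i}{\bf\Omega}^c_{ij}$ is a leading factor $\tfrac{k_BT\Gamma_c}{3\pi\eta D^0_i}=\tfrac{\Gamma_c\sigma_c}{3\pi}S\tfrac{D^0_c}{D^0_i}=O(\sqrt{\xi_c})$ times a bounded quantity (here one uses the lower bound on $\Gamma^\ep$ and on $(1+\Gamma\sigma_i)(1+\Gamma\sigma_j)$), so $\tfrac{k_BT}{D^0_i}{\bf\Omega}^c_{ij}=O(\sqrt{\xi_c})$. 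For the hard sphere part (\ref{OmegaHS}) one first checks $\tilde X_3=3X_1X_2/(4X_0)+X_3/4=O(\xi_c)$, so the fraction containing $\tilde X_3$ is $1+O(\xi_c)$, while the prefactor $\tfrac{k_BT(\sigma_i+\sigma_j)^2 n_c}{12\eta D^0_i}=O(S\xi_c)$; hence $\tfrac{k_BT}{D^0_i}{\bf\Omega}^{HS}_{ij}=O(\xi_c)$. For the relaxation term (\ref{Relax})--(\ref{Relaxk}) the prefactor is $O(\Gamma_c^2\,bi)$ and $1-e^{-2\kappa_q(\sigma_i+\sigma_j)}=O(\kappa_q\sigma_c)=O(\sqrt{\xi_c})$; the key point is that the denominator is positive and $\Theta(\Gamma_c^2)$, which follows from the inequality $\sum_k n_k z_k^2 e^{-\kappa_q\sigma_k}/(1+\Gamma\sigma_k)^2\le\sum_k n_k z_k^2/(1+\Gamma\sigma_k)^2=\Gamma^2/(\pi L_B)$ implied by the definition (\ref{Gamma}), whence the denominator lies between $\kappa_q^2+2\Gamma\kappa_q\ge\kappa_q^2=\Theta(\Gamma_c^2)$ and $\kappa_q^2+2\Gamma\kappa_q+2\Gamma^2=O(\Gamma_c^2)$. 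Therefore $\mathcal{R}_{ij}=O(bi\,\Gamma_c\sigma_c)=O(\sqrt{\xi_c})$, and collecting the three estimates gives $K^\ep_{ij}=(\delta_{ij}+O(\sqrt{\xi_c}))(1+O(\sqrt{\xi_c}))=\delta_{ij}+O(\sqrt{\xi_c})$.

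The only step that requires genuine care rather than routine expansion is the uniformity of the lower bounds on the various denominators ($1+\Gamma^\ep\Gamma_c\sigma_j$, the screening sum in (\ref{Omegac}), $1+2\tilde X_3$ in (\ref{OmegaHS}), and the denominator of (\ref{Relax})). The first three are of the form $1+(\text{small})$ or $\Gamma_c\cdot(\text{order one})$ and are harmless once $\Gamma^\ep$ is bounded away from zero; the last is handled by the inequality derived from (\ref{Gamma}), reducing it to $\kappa_q^2=\Theta(\Gamma_c^2)$, which in turn holds because the electrolyte contains both anions and cations. All of this presumes that the $n^\ep_j$ remain in a fixed compact subset of $(0,\infty)^N$; in the situation where the lemma is applied (equilibrium solutions, Theorem \ref{thm.exist3}) this is exactly the content of the invariant region estimates, so the lemma should be understood as uniform over that region.
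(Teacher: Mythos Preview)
Your argument is correct and follows essentially the same route as the paper's proof: both reduce everything to the identities $\Gamma_c\sigma_c=O(\sqrt{\xi_c})$, $L_B\Gamma_c=O(\sqrt{\xi_c})$, $\kappa_q\sigma_c=O(\sqrt{\xi_c})$ and then read off the orders of $\ln\gamma^{HS}$, the MSA exponent, $\tfrac{k_BT}{D^0_i}{\bf\Omega}^c_{ij}$, $\tfrac{k_BT}{D^0_i}{\bf\Omega}^{HS}_{ij}$ and $\mathcal{R}_{ij}$ term by term. Your version is in fact more explicit than the paper's on two points the authors leave implicit: the lower bound on the denominator of (\ref{Relax}) via the inequality $2\pi L_B\sum_k n_k z_k^2 e^{-\kappa_q\sigma_k}/(1+\Gamma\sigma_k)^2\le 2\Gamma^2$ coming from (\ref{Gamma}), and the observation that uniformity of the $O(\sqrt{\xi_c})$ constants requires the $n^\ep_j$ to lie in a fixed compact of $(0,\infty)^N$ (and hence $\Gamma^\ep$ bounded away from zero).
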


Hence the MSA model is a regular $O(\sqrt{\xi_c})$ perturbation of the idealized model.
Theorem \ref{thm.exist3} in Section \ref{sec3} gives the equilibrium MSA solution as an
$O(\sqrt{\xi_c})$ perturbation of the equilibrium idealized solution. The arguments
from Section \ref{secexist} could be extended to interpret the MSA variant of
Poisson-Boltzmann equation as an $O(\sqrt{\xi_c})$ perturbation of the classical
(ideal) Poisson-Boltzmann equation.

\begin{proof}
In view of formula (\ref{Hardsphere}) we find
$$
\ln \gamma^{HS} = O(\xi_c) .
$$
From its definition (\ref{Gamma}) and for small $\xi_c$ we deduce that
$$
\Gamma = O(\sqrt{L_B n_c}) .
$$
Using assumption (\ref{Assumpordre}), $bi=O(1)$, yields
$$
\Gamma \sigma_j = O(\sqrt{bi\xi_c}) = O(\sqrt{\xi_c})
\quad \mbox{and} \quad
\Gamma L_B = O(\sqrt{bi^3\xi_c}) = O(\sqrt{\xi_c}) ,
$$
which implies from (\ref{gamma-j})
$$
-\frac{L_B \Gamma z^2_j}{1+ \Gamma \sigma_j} = O(\sqrt{\xi_c})
\quad \mbox{and thus} \quad
\ln \gamma_j = O(\sqrt{\xi_c}) .
$$
Turning to the Onsager coefficients, we obtain from (\ref{Relaxk}) that
$$
\kappa_q \sigma_c = O( \sqrt{L_B n_c} \sigma_c ) = O(\sqrt{\xi_c}) ,
$$
which implies after some algebra that
$$
\mathcal{R}_{ij} = O( L_B \kappa_q) = O(\sqrt{\xi_c}) .
$$
Using the second part of assumption (\ref{Assumpordre}), $S=O(1)$, yields
$$
\Omega^c_{ij} \frac{k_B T}{D^0_i} = O(\frac{k_B T}{\eta D^0_c \sigma_c}
\sqrt{bi \xi_c}) = O(S \sqrt{bi \xi_c}) = O(\sqrt{\xi_c}) .
$$
Similarly
$$
\Omega^{HS}_{ij} \frac{k_B T}{D^0_i} = O( S \xi_c) = O(\xi_c) ,
$$
which eventually yields
$$
L_{ij}=\frac{n_i D^0_i}{k_B T} (\delta_{ij} + O(\sqrt{\xi_c})) ,
$$
from which we infer the conclusion (\ref{Approwksi}).
Note that a similar approximation holds for the chemical potential
$$
\mu_j = \mu_j^0 + k_B T (\ln n_j + O(\sqrt{\xi_c})).
$$
\end{proof}

\section{Equilibrium solution}
\label{sec3}

The goal of this section is to find a so-called {\bf equilibrium solution}
of system (\ref{EPPR1})-(\ref{OffdiagSD}) when the exterior forces are
vanishing $\mathbf{f}=0$ and $\Psi^{ext}=0$. However, the surface charge
density $\Sigma^*$ is not assumed to vanish or to be small. This equilibrium
solution will be a reference solution around which we shall linearize
system (\ref{EPPR1})-(\ref{OffdiagSD}) in the next section. Then we
perform the homogenization of the (partially) linearized system.
We denote by $n^{0,\ep}_i, \Psi^{0,\ep}, \mathbf{u}^{0,\ep}, M^{0,\ep}_i,
p^{0,\ep}$ the equilibrium quantities.

In the case $\mathbf{f}=0$ and $\Psi^{ext}=0$, one can find an equilibrium
solution by choosing a zero fluid velocity and taking all diffusion fluxes
equal to zero. More precisely, we require
\begin{gather}
\mathbf{u}^{0, \ep} = 0  \quad \mbox{ and } \quad \nabla M^{0,\ep}_j=0 ,
\label{LINN}
\end{gather}
which obviously implies that ${\bf j}_i^{0,\ep}=0$ and (\ref{Nernst1})-(\ref{Nernst1bc})
are satisfied. The Stokes equation (\ref{EPPR1}) shall give the corresponding
value of the pressure satisfying
$$
\nabla p^{0,\ep}(x) = - \sum_{j=1}^N z_j n_j^{0,\ep} (x) \nabla\Psi^{0,\ep}(x) ,
$$
for which an explicit expression is given below (see (\ref{E_j-pres})).
From $\nabla M^{0,\ep}_j=0$ and (\ref{electroflux2}) we deduce
that there exist constants $n_j^0(\infty)>0$ and $\gamma_j^0(\infty)>0$ such that
\begin{gather}
n_j^{0,\ep} (x) = n_j^0(\infty) \gamma_j^0(\infty)
\frac{\exp \{ - z_j \Psi^{0, \ep} (x)  \}}{\gamma_j^{0,\ep}(x)} .
\label{NJ00}
\end{gather}
The value $n_j^0(\infty)$ is {  the reservoir concentration (also
called the infinite dilute concentration) which will be
later assumed to satisfy the bulk electroneutrality condition for zero potential.
The value $\gamma_j^0(\infty)$ is the reservoir} activity coefficient which
corresponds to the value of $\gamma_j^{0,\ep}$
for zero potential (see (\ref{equi.activity}) below for its precise formula).
Before plugging (\ref{NJ00}) into Poisson equation (\ref{EPPR3b})
to obtain the MSA variant of Poisson-Boltzmann equation for the potential
$\Psi^{0,\ep}$, we have to determine the value of the activity coefficient
$\gamma_j^{0,\ep}$.

From the first equation of (\ref{gamma-eps}) we have
\begin{gather}
\gamma_j^{0,\ep} = \gamma^{HS}(\xi)
\exp \{ - \frac{L_B \Gamma^{0,\ep} \Gamma_c z_j^2}{1+ \Gamma^{0,\ep} \Gamma_c \sigma_j} \}
= \exp \{ p(\xi) - \frac{L_B \Gamma^{0,\ep} \Gamma_c z_j^2}{1+ \Gamma^{0,\ep} \Gamma_c \sigma_j} \} ,
\label{NJ001}
\end{gather}
where, for $\xi \in [0,1)$, $p(\xi)$ is a polynomial defined by (\ref{Hardsphere}) and,
recalling definition (\ref{xic}) of the characteristic value $\xi_c$,
the solute packing fraction $\xi$ is
\begin{equation}
\label{gammaeps0}
\xi =  \xi_c \sum_{j=1}^N n_j^{0,\ep} (\frac{\sigma_j}{\sigma_c})^3 .
\end{equation}

The second equation of (\ref{gamma-eps}) gives a formula for the MSA screening
parameter
\begin{equation}
\label{Gammaeps0}
(\Gamma^{0,\ep})^2 =
\sum_{k=1}^N \frac{n^{0,\ep}_k z_k^2}{(1+ \Gamma_c \Gamma^{0,\ep} \sigma_k )^2}.
\end{equation}
Let us explain how to solve the algebraic equations (\ref{NJ00}), (\ref{NJ001}),
(\ref{gammaeps0}) and (\ref{Gammaeps0}).

Combining (\ref{NJ00}), (\ref{NJ001}) and (\ref{gammaeps0}),
for given potential $\Psi^{0,\ep}$ and screening parameter $\Gamma^{0,\ep}$,
the solute packing fraction $\xi$ is a solution of the algebraic equation
\begin{equation}
\label{sol.xi0}
\xi =  \exp \{ -p(\xi) \} \xi_c \sum_{j=1}^N (\frac{\sigma_j}{\sigma_c})^3
n_j^0(\infty) \gamma_j^0(\infty) \exp \left\{ - z_j \Psi^{0,\ep} +
\frac{L_B \Gamma^{0,\ep} \Gamma_c z_j^2}{1+ \Gamma^{0,\ep} \Gamma_c \sigma_j} \right\} .
\end{equation}
Once we know $\xi\equiv\xi(\Psi^{0,\ep},\Gamma^{0,\ep})$, solution of (\ref{sol.xi0}),
combining (\ref{NJ00}) and (\ref{Gammaeps0}), $\Gamma^{0,\ep}$ is a solution of the
following algebraic equation, depending on $\Psi^{0,\ep}$,
\begin{gather}
(\Gamma^{0,\ep})^2 =  \sum^N_{j=1} n_j^0(\infty) \gamma_j^0(\infty)
\frac{z_j^2}{(1+\Gamma^{0,\ep} \Gamma_c \sigma_j)^2}
\exp \left\{ - z_j \Psi^{0,\ep} +
\frac{L_B \Gamma^{0,\ep} \Gamma_c z_j^2}{1+ \Gamma^{0,\ep} \Gamma_c \sigma_j}
- p\left(\xi(\Psi^{0,\ep},\Gamma^{0,\ep})\right) \right\} .
\label{GammaL0}
\end{gather}
All in all, solving the two algebraic equations (\ref{sol.xi0}) and (\ref{GammaL0})
yields the values $\Gamma^{0,\ep}(\Psi^{0,\ep})$ and
$\tilde\xi(\Psi^{0,\ep}) \equiv \xi\Big(\Psi^{0,\ep},\Gamma^{0,\ep}(\Psi^{0,\ep})\Big)$
(see Section \ref{secexist} for a precise statement).

Then the electrostatic equation (\ref{EPPR3b}) reduces to the following MSA variant
of Poisson-Boltzmann equation which is a nonlinear partial differential equation
for the sole unknown $\Psi^{0,\ep}$
\begin{equation}
\label{BP0}
\left\{ \begin{array}{ll}
\dsp -\ep^2 \Delta \Psi^{0,\ep} =\beta  \sum_{j=1}^N z_j n_j^0(\infty) \gamma_j^0(\infty)
\exp \left\{ - z_j \Psi^{0,\ep} + \frac{L_B \Gamma^{0,\ep}(\Psi^{0,\ep}) \Gamma_c z_j^2}{1+ \Gamma^{0,\ep}(\Psi^{0,\ep}) \Gamma_c \sigma_j} -p(\tilde\xi(\Psi^{0,\ep}) ) \right\} \; \mbox{ in } \ \Oe , &  \\
\dsp  \ep \nabla \Psi^{0,\ep} \cdot \nu = -N_\sigma \Sigma^* \ \mbox{ on } \, \partial\Oe \setminus \partial\O ,\
 \Psi^{0,\ep} \; \mbox{ is } \Omega-\mbox{periodic}.&
\end{array} \right.
\end{equation}

In Section \ref{secexist} (see Theorem \ref{thm.main}) we shall prove the following
existence result. Unfortunately we are unable to prove uniqueness.

\begin{theorem}
\label{thm.exist3}
Assuming that the surface charge distribution $\Sigma^*$ belongs
to $L^\infty(\p\Oe)$, that the ions are not too small, namely
\begin{equation}
\label{Bound0}
L_B < (6+4\sqrt{2})  \min_{1\leq j\leq N} \frac{\sigma_j}{z_j^2}
\quad \mbox{ with } \quad 6+4\sqrt{2} \approx 11.656854 ,
\end{equation}
and that the characteristic value $\xi_c$ is small enough,
there exists at least one solution of (\ref{BP0}) $\Psi^{0,\ep} \in H^1(\Oe)\cap L^\infty(\Oe)$.
\end{theorem}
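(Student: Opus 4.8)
The first step is to eliminate the auxiliary unknowns $\Gamma^{0,\ep}$ and $\tilde\xi$ and rewrite (\ref{BP0}) as a genuine scalar semilinear equation. For a fixed scalar value $s$ of the potential I would solve the algebraic pair (\ref{sol.xi0})--(\ref{GammaL0}) in two stages. Since $p\geq 0$ and $p'\geq 0$ on $[0,1)$, the map $\phi(\xi)=\xi\,e^{p(\xi)}$ is a smooth strictly increasing bijection of $[0,1)$ onto $[0,+\infty)$, so for given $(\Gamma,s)$ equation (\ref{sol.xi0}) has a unique root $\xi=\tilde\xi(\Gamma,s)\in[0,1)$, smooth in its arguments and non-decreasing in $\Gamma$. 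Substituting it into (\ref{GammaL0}) leaves a single equation $\Gamma^2=R(\Gamma,s)$, with $R$ smooth, $R(0,s)>0$, and $R(\Gamma,s)\to 0$ as $\Gamma\to+\infty$ (each summand carries $(1+\Gamma_c\Gamma\sigma_k)^{-2}$ while the MSA exponent stays bounded by $L_Bz_k^2/\sigma_k$); hence a positive root exists by the intermediate value theorem.

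\noindent\textbf{Uniqueness of $\Gamma$ and the r\^ole of (\ref{Bound0}).} The delicate point is \emph{uniqueness}, because the screening factor $\exp\{L_B\Gamma z_k^2/(1+\Gamma\sigma_k)\}$ in $R$ is increasing in $\Gamma$, so $R(\cdot,s)$ need not be monotone. I would show that at every root $\partial_\Gamma R(\Gamma,s)<2\Gamma$, so that $\Gamma^2-R(\Gamma,s)$, which is negative at $\Gamma=0$ and tends to $+\infty$, has exactly one zero. Since $\partial_\Gamma\tilde\xi\geq 0$ and $p'\geq 0$, the $\xi$-dependence contributes a non-positive term to $\partial_\Gamma R$, so it suffices to bound the frozen-$\xi$ part: writing each summand of $R$ as $b_k\,g_k(t_k)$ with $g_k(t)=(1+t)^{-2}e^{c_kt/(1+t)}$, $t_k=\Gamma\sigma_k$ and $c_k=L_Bz_k^2/\sigma_k$, one has $t_k g_k'(t_k)/g_k(t_k)=t_k(c_k-2-2t_k)/(1+t_k)^2$, which is $<2$ for all $t_k>0$ precisely when $c_k<\frac{2}{t_k}+6+4t_k$; minimising the right-hand side over $t_k>0$ yields the threshold $6+4\sqrt{2}$ (attained at $t_k=1/\sqrt{2}$). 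Thus (\ref{Bound0}) forces $c_k<6+4\sqrt{2}$ for every $k$, hence $\partial_\Gamma R(\Gamma,s)<\frac{2}{\Gamma}R(\Gamma,s)=2\Gamma$ at every root and every $s$, giving unique and smooth maps $s\mapsto\Gamma^{0,\ep}(s)$ and $s\mapsto\tilde\xi(s)$ (smoothness via the implicit function theorem, whose nondegeneracy hypothesis $2\Gamma-\partial_\Gamma R\neq 0$ is exactly what was proved). Equation (\ref{BP0}) then reads $-\ep^2\Delta\Psi^{0,\ep}=\beta\,F(\Psi^{0,\ep})$ in $\Oe$, $\ep\nabla\Psi^{0,\ep}\cdot\nu=-N_\sigma\Sigma^*$ on $\partial\Oe\setminus\partial\O$, $\Psi^{0,\ep}$ periodic, with $F$ a smooth scalar nonlinearity and the surface charge entering only through the linear Neumann datum.

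\noindent\textbf{A priori $L^\infty$ bound (invariant region).} Next I would prove that any solution satisfies $\|\Psi^{0,\ep}\|_{L^\infty(\Oe)}\leq M$ with $M$ depending on the data and on $\|\Sigma^*\|_{L^\infty}$ but \emph{not} on $\ep$. Using $n_j^0(\infty)\gamma_j^0(\infty)\exp\{L_B\Gamma^{0,\ep}(s)\Gamma_c z_j^2/(1+\Gamma^{0,\ep}(s)\Gamma_c\sigma_j)-p(\tilde\xi(s))\}=n_j^{0,\ep}(s)$ one gets $F(s)=\sum_j z_j\,n_j^{0,\ep}(s)$; bulk electroneutrality of the reservoir concentrations gives $F(0)=\sum_j z_j n_j^0(\infty)=0$, while as $s\to+\infty$ the summand with the most negative valency $z_1$ dominates (the correction factor $\exp\{L_B\Gamma^{0,\ep}z_j^2/(1+\Gamma^{0,\ep}\Gamma_c\sigma_j)\}$ being bounded, by (\ref{Bound0}), by $e^{6+4\sqrt{2}}$) and as $s\to-\infty$ the one with the largest valency $z_N$ does, so $s\,F(s)<0$ for $|s|\geq M$ with $M$ independent of $\ep$. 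By Lemma \ref{lem.ideal}, for $\xi_c$ small $F$ is moreover an $O(\sqrt{\xi_c})$ perturbation of the ideal Poisson--Boltzmann nonlinearity on $[-M,M]$, which keeps all constants uniform. Testing the weak formulation against $(\Psi^{0,\ep}-M)^+$ and $(\Psi^{0,\ep}+M)^-$, using this sign condition to discard the volume term and a scaled trace inequality on the perforated domain $\Oe$ to absorb the surface term generated by $\Sigma^*$, a Stampacchia (De Giorgi) truncation then yields the bound; testing with $\Psi^{0,\ep}$ itself supplies the accompanying $H^1$ estimate.

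\noindent\textbf{Existence and main obstacle.} Replacing $F$ by a truncation $F_M$ agreeing with $F$ on $[-M,M]$ and keeping the sign condition $sF_M(s)\leq 0$ for $|s|\geq M$, the truncated problem has bounded right-hand side; the solution operator of the associated linear Neumann problem (with the standard modification handling the compatibility of the periodic Neumann datum) is compact and continuous on $C(\overline{\Oe})$ by elliptic regularity and maps a ball into itself, so Schauder's theorem (or a Leray--Schauder degree argument) produces a fixed point. The invariant-region estimate forces this fixed point to satisfy $\|\cdot\|_{L^\infty}\leq M$, hence to solve the untruncated equation (\ref{BP0}); elliptic regularity then upgrades it to $\Psi^{0,\ep}\in H^1(\Oe)\cap L^\infty(\Oe)$, periodicity being built into the function space. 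The two genuinely delicate points are: (i) the uniqueness of the MSA screening parameter $\Gamma^{0,\ep}(s)$ --- without it $F$ is not even well defined --- which is precisely what pins down the explicit constant $6+4\sqrt{2}$ in (\ref{Bound0}); and (ii) the $\ep$-uniform a priori estimate, which must be obtained in the absence of any monotone or variational structure (the nonlinearity $F$ is genuinely non-monotone) and therefore rests on the invariant-region mechanism of step three, the smallness of $\xi_c$ ensuring that this mechanism --- classical for the ideal Poisson--Boltzmann equation --- survives the MSA perturbation. I expect (ii) to be the harder of the two.
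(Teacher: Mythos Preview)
Your overall architecture matches the paper's: reduce to a scalar equation by solving the algebraic system (\ref{sol.xi0})--(\ref{GammaL0}), truncate, solve the truncated problem, and remove the truncation via an $L^\infty$ bound. Your uniqueness argument for $\Gamma$ under (\ref{Bound0}) is essentially the paper's Lemma~\ref{lem.gamma2} (the paper writes the condition as positivity on $\RR^+$ of the quadratic $4x^2+(6-c_k)x+2$ with $x=\Gamma\Gamma_c\sigma_k$, which is your inequality $c_k<2/t+6+4t$ rearranged). For the truncated problem the paper minimises a coercive $C^1$ functional rather than invoking Schauder; either works.

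The genuine gap is your Step~3 (the $L^\infty$ bound). Two points.

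\textbf{(i) The surface term cannot be ``absorbed by a trace inequality''.} Testing with $(\Psi-M)^+$ leaves the integral $\ep N_\sigma\int_{\partial\Oe\setminus\partial\O}\Sigma^*(\Psi-M)^+$ with no sign, and the MSA nonlinearity $F$, unlike the ideal one, stays \emph{bounded} as $|s|\to\infty$: the hard-sphere factor $e^{-p(\tilde\xi(s))}\to 0$ caps the concentrations, so the favourable volume term does not blow up and cannot dominate an arbitrary $\Sigma^*\in L^\infty$ via a Stampacchia iteration. The paper's device is to introduce the solution $U$ of the auxiliary linear problem $-\Delta U=\overline\sigma$ in $\Omega_p$, $\nabla U\cdot\nu=-N_\sigma\Sigma^*$ on $\partial\Omega_p$, periodic, with $\overline\sigma$ the surface average of $N_\sigma\Sigma^*$. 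Testing the equation for $\Psi_M-U$ against $(\Psi_M-U+C)^-$ kills the boundary term exactly, and what remains is a pure sign analysis of a volume coefficient $Q$ (see (\ref{VVP})--(\ref{bound13})).

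\textbf{(ii) The smallness of $\xi_c$ is used quantitatively, not perturbatively.} Your appeal to Lemma~\ref{lem.ideal} (``$O(\sqrt{\xi_c})$ perturbation of the ideal case'') controls $F$ only on compact sets of $s$; it says nothing for $|s|$ large, which is precisely where the bound must act. In the paper the truncation level is $M=\log(1/\xi_c)$ and the shift $C$ is chosen so that on the active set one has $\Psi_M<-(1/z_N)\log(1/(q\xi_c))$. Lower bounds on $\xi$ that are \emph{uniform in $\xi_c$} on this range (Lemma~\ref{propertiesxi2}) give $e^{-p(\xi)}\geq c/(\xi_c\,g(\Psi_M))$; combined with the explicit asymptotics of $g$ one obtains $Q\leq \overline\sigma+O(1)-c'/\xi_c<0$ for $\xi_c$ small. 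That, not closeness to the ideal model, is how $\xi_c$ enters.

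Keep your Steps 1, 2 and 4; replace Step~3 by the $U$-subtraction trick together with the quantitative bounds on $\xi$ of Lemmas~\ref{propertiesxi}--\ref{propertiesxi2}.
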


Introducing the primitive $E_j(\Psi)$ of
\begin{equation}
\label{E_j}
E_j^\prime(\Psi) = z_j n^0_j(\infty) \gamma_j^0(\infty) \exp \{ -z_j \Psi +
\frac{L_B \Gamma^{0,\ep} (\Psi ) \Gamma_c z_j^2}{1+ \Gamma^{0,\ep} (\Psi ) \Gamma_c \sigma_j}
-p(\tilde\xi(\Psi)) \} ,
\end{equation}
the equilibrium pressure in Stokes equations (corresponding to a zero velocity) is given
(up to an additive constant) by
\begin{equation}
\label{E_j-pres}
p^{0, \ep} = \sum^N_{j=1} E_j(\Psi^{0,\ep}) .
\end{equation}

\begin{remark}
\label{rem.E_j}
In the ideal case, i.e., when $\gamma^{0,\ep}_j=1$, the function $E_j(\Psi^{0,\ep})$ defined by
(\ref{E_j}) is simply equal to $n_j^{0,\ep}= n^0_j(\infty) \exp \{ -z_j \Psi^{0,\ep} \}$.
\end{remark}

From a physical point of view, it is desired that the solution of (\ref{BP0})
vanishes, i.e., $\Psi^{0,\ep} =0$, when the surface charges are null, i.e.,
$\Sigma^*=0$. Therefore, following the literature,  we impose the
{\bf bulk electroneutrality condition}
\begin{equation}
\label{Neutrality}
\sum_{j=1}^N E^\prime_j(0) = -
\sum_{j=1}^N z_j n^0_j(\infty) \gamma_j^0(\infty) \exp
\{ \frac{L_B \Gamma^{0,\ep}(0) \Gamma_c z_j^2}{1+ \Gamma^{0,\ep}(0) \Gamma_c \sigma_j}
-p(\tilde\xi(0)) \}  = 0,
\end{equation}
where $\Gamma^{0,\ep}(0)$ is the solution of (\ref{GammaL0}) for $\Psi^{0,\ep} =0$.

Defining the equilibrium activity coefficient by
\begin{equation}
\label{equi.activity}
\gamma_j^0(\infty) = \exp
\{ p(\tilde\xi(0)) - \frac{L_B \Gamma^{0,\ep}(0) \Gamma_c z_j^2}{1+ \Gamma^{0,\ep}(0) \Gamma_c \sigma_j} \} ,
\end{equation}
the bulk electroneutrality condition (\ref{Neutrality}) reduces to its usual
form
$$
\sum_{j=1}^N z_j n^0_j(\infty)  = 0 .
$$
Formula (\ref{equi.activity}) is an implicit algebraic equation for $\gamma_j^0(\infty)$
since $\tilde\xi(0)$ and $\Gamma^{0,\ep}(0)$ depend themselves on the $\gamma_k^0(\infty)$'s.
The next Lemma proves that it is a well-posed equation.

\begin{lemma}
\label{lem.gamma0}
There always exists a unique solution $\gamma_j^0(\infty)$
of the algebraic equation (\ref{equi.activity}).
\end{lemma}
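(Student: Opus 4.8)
The plan is to read (\ref{equi.activity}) not as a scalar equation but as one block of a \emph{closed} algebraic system in the $N+2$ unknowns $\gamma_1^0(\infty),\dots,\gamma_N^0(\infty)$, $\tilde\xi(0)$ and $\Gamma^{0,\ep}(0)$, the two remaining blocks being (\ref{sol.xi0}) and (\ref{GammaL0}) evaluated at $\Psi^{0,\ep}=0$ (this is exactly the ``circular'' dependence noted just before the Lemma). The whole point is that, once (\ref{equi.activity}) is used to eliminate the $\gamma_j^0(\infty)$'s, this system \emph{decouples} completely.

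The crucial observation is the $j$-independent cancellation
$$
\gamma_j^0(\infty)\,\exp\Big\{\frac{L_B \Gamma^{0,\ep}(0)\Gamma_c z_j^2}{1+\Gamma^{0,\ep}(0)\Gamma_c\sigma_j}\Big\}=\exp\{p(\tilde\xi(0))\},
$$
obtained by inserting (\ref{equi.activity}). Substituting this identity into (\ref{sol.xi0}) at $\Psi^{0,\ep}=0$, the factors $\exp\{-p(\tilde\xi(0))\}$ and $\exp\{p(\tilde\xi(0))\}$ cancel and the equation collapses to the explicit value
$$
\tilde\xi(0)=\xi_c\sum_{j=1}^N\Big(\frac{\sigma_j}{\sigma_c}\Big)^3 n_j^0(\infty),
$$
which is nothing but the reservoir packing fraction and therefore lies in $[0,1)$ under the standing smallness assumption on $\xi_c$ (so that $p(\tilde\xi(0))$ is well defined and finite). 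Inserting the same cancellation, this time in the form $\gamma_j^0(\infty)\exp\{\frac{L_B \Gamma^{0,\ep}(0)\Gamma_c z_j^2}{1+\Gamma^{0,\ep}(0)\Gamma_c\sigma_j}-p(\tilde\xi(0))\}=1$, into (\ref{GammaL0}) at $\Psi^{0,\ep}=0$ removes all $\gamma_j^0(\infty)$ and all $p$-terms and reduces that equation to the scalar fixed-point equation
$$
\big(\Gamma^{0,\ep}(0)\big)^2=\sum_{j=1}^N\frac{n_j^0(\infty)\,z_j^2}{(1+\Gamma_c\Gamma^{0,\ep}(0)\sigma_j)^2}.
$$

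Next I would solve this scalar equation by monotonicity: setting $g(\Gamma)=\Gamma^2-\sum_{j=1}^N n_j^0(\infty)z_j^2/(1+\Gamma_c\Gamma\sigma_j)^2$ for $\Gamma\geq 0$, one has $g(0)=-\sum_j n_j^0(\infty)z_j^2<0$ (all $n_j^0(\infty)>0$, all $z_j\neq 0$), $g(\Gamma)\to +\infty$ as $\Gamma\to+\infty$, and $g'(\Gamma)=2\Gamma+\sum_j 2n_j^0(\infty)z_j^2\Gamma_c\sigma_j/(1+\Gamma_c\Gamma\sigma_j)^3>0$, so $g$ has a unique positive zero $\Gamma^{0,\ep}(0)$. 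With $\tilde\xi(0)$ and $\Gamma^{0,\ep}(0)$ now uniquely and explicitly pinned down, (\ref{equi.activity}) returns one and only one value of each $\gamma_j^0(\infty)$. Conversely, running the cancellation backwards shows that this triple $(\gamma_j^0(\infty),\tilde\xi(0),\Gamma^{0,\ep}(0))$ indeed solves the original coupled system, so the reductions are equivalences and the solution is genuinely unique.

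There is no real analytic difficulty here; the lemma is essentially this decoupling observation. The only points needing care are: (i) spotting the $j$-independent exponential factor so that (\ref{sol.xi0}) and (\ref{GammaL0}) lose their mutual coupling; (ii) treating (\ref{equi.activity}) together with (\ref{sol.xi0})--(\ref{GammaL0}) as a single system, since $\tilde\xi(0)$ and $\Gamma^{0,\ep}(0)$ are themselves functions of the $\gamma_k^0(\infty)$'s; and (iii) noting that $\tilde\xi(0)=\xi_c\sum_j(\sigma_j/\sigma_c)^3 n_j^0(\infty)$ automatically stays below $1$ in the regime considered, which is what keeps all the exponentials finite.
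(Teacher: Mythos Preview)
Your proof is correct and follows essentially the same approach as the paper: both plug (\ref{equi.activity}) into (\ref{sol.xi0}) and (\ref{GammaL0}) at $\Psi^{0,\ep}=0$ to decouple the system, obtain the explicit value of $\tilde\xi(0)$ and the monotone scalar equation for $\Gamma^{0,\ep}(0)$, and then read off $\gamma_j^0(\infty)$ from (\ref{equi.activity}). Your version is slightly more explicit about the monotonicity argument (computing $g'(\Gamma)>0$ rather than simply noting that one side is increasing and the other decreasing) and about the equivalence of the reduced and original systems, but the core idea is identical.
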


\begin{proof}
Assume that there exists $\gamma_j^0(\infty)$ satisfying (\ref{equi.activity})
and plug this formula in (\ref{GammaL0}). It yields
$$
\left(\Gamma^{0,\ep}(0)\right)^2 =  \sum^N_{j=1}
\frac{n_j^0(\infty) z_j^2}{(1+\Gamma^{0,\ep}(0) \Gamma_c \sigma_j)^2} ,
$$
which admits a unique solution $\Gamma^{0,\ep}(0)>0$ since the left hand side is
strictly increasing and the right hand side is decreasing.
On the same token, using (\ref{equi.activity}) in (\ref{sol.xi0})
leads to
$$
\tilde\xi(0) = \xi_c \sum_{j=1}^N (\frac{\sigma_j}{\sigma_c})^3 n_j^0(\infty) .
$$
We have thus found explicit values for $\Gamma^{0,\ep}(0)$ and $\tilde\xi(0)$
which do not depend on the $\gamma_k^0(\infty)$'s. Using them in
(\ref{equi.activity}) gives its unique solution $\gamma_j^0(\infty)$.
\end{proof}

\begin{remark}
\label{rem.xic}
From the proof of Lemma \ref{lem.gamma0} it is clear that $\Gamma^{0,\ep}(0)$
does not depend on $\xi_c$, while $\tilde\xi(0) = O(\xi_c)$, which
implies that $\gamma_j^0(\infty) = O(1)$ for small $\xi_c$.
\end{remark}

\begin{remark}
The bulk electroneutrality condition (\ref{Neutrality}) is not
a restriction. Actually all our results hold under
the much weaker assumption (\ref{valence}) that all valencies $z_j$
do not have the same sign. Indeed, if (\ref{Neutrality}) is
not satisfied, we can make a change of variables in the
Poisson-Boltzmann equation (\ref{BP0}), defining a new
potential $\tilde\Psi^{0,\ep}=\Psi^{0,\ep}+C$ where $C$ is a
constant reference potential. Since the function
$$
C \ \to \ \Phi(C) =\sum_{j=1}^N z_j n^0_j(\infty) \gamma_j^0(\infty) \exp \{ - z_j C +
\frac{L_B \Gamma^{0,\ep}(C) \Gamma_c z_j^2}{1+ \Gamma^{0,\ep}(C) \Gamma_c \sigma_j}
- p(\tilde\xi(C))\}
$$
is continuous and admits opposite infinite limits when $C$
tends to $\pm$, there exists at least one value $C$ such that $\Phi(C) =0$.
This change of variables for the potential leaves (\ref{GammaL0}) and
(\ref{BP0}) invariant if we change the constants $n_j^0(\infty) \gamma_j^0(\infty)$ in new
constants
$$
\tilde n_j^0(\infty) \tilde \gamma_j^0(\infty) = n_j^0(\infty) \gamma_j^0(\infty) \exp \{ - z_j C +
\frac{L_B \Gamma^{0,\ep}(C) \Gamma_c z_j^2}{1+ \Gamma^{0,\ep}(C) \Gamma_c \sigma_j} -
\frac{L_B \Gamma^{0,\ep}(0) \Gamma_c z_j^2}{1+ \Gamma^{0,\ep}(0) \Gamma_c \sigma_j}
- p(\tilde\xi(C)) + p(\tilde\xi(0))\} .
$$
These new constants satisfy the bulk electroneutrality condition (\ref{Neutrality}).
\end{remark}

\section{Linearization}
\label{sec4}

We now proceed to the linearization of electrokinetic equations (\ref{EPPR1}-\ref{OffdiagSD})
around the equilibrium solution computed in Section \ref{sec3}. We therefore assume
that the external forces, namely the static electric potential $\Psi^{ext}(x)$ and
the hydrodynamic force $\mathbf{f}(x)$, are small. However, the surface charge density
$\Sigma^*$ on the pore walls is not assumed to be small since it is part of the
equilibrium problem studied in Section \ref{sec3}. Such a linearization process
is classical in the ideal case (see the seminal paper \cite{OBW:78} by O'Brien et al.)
but it is new and slightly more complicated for the MSA model.
For small exterior forces, we write the perturbed electrokinetic unknowns as
\begin{gather}
n_i^\ep (x) = n^{0,\ep}_i (x) + \delta n_i^\ep (x), \quad  \Psi^\ep (x) = \Psi^{0,\ep} (x) + \delta \Psi^\ep (x), \notag \\
\mathbf{u}^\ep(x) = \mathbf{u}^{0, \ep}(x) + \delta\mathbf{u}^\ep(x), \quad  p^\ep (x) = p^{0, \ep} (x) + \delta p^\ep (x), \notag
\end{gather}
where $n^{0,\ep}_i, \Psi^{0,\ep}, \mathbf{u}^{0,\ep}, p^{0,\ep}$ are the equilibrium quantities, corresponding
to $\mathbf{f}=0$ and $\Psi^{ext}=0$. The $\delta$ prefix indicates a perturbation.
Since the equilibrium velocity vanishes $\mathbf{u}^{0,\ep}=0$, we identify in the sequel
$\mathbf{u}^\ep=\delta \mathbf{u}^\ep$.

Motivated by the form of the Boltzmann equilibrium distribution and the calculation of $n_i^{0,\ep}$, we follow the lead of \cite{OBW:78} and introduce a so-called ionic potential $\Phi_i^\ep$ which is defined in terms of $n_i^\ep$ by
\begin{equation}
\label{BP2}
n_i^\ep (x) \gamma_i^\ep (x) = n_i^0(\infty) \gamma_j^0(\infty) \exp \{ - z_i (\Psi^\ep (x)  + \Phi_i^\ep (x) + \Psi^{ext,*}(x) )  \} ,
\end{equation}
where
\begin{equation}
\label{BP2b}
\gamma_i^\ep = \exp \{ p(\xi) - \frac{L_B \Gamma^\ep \Gamma_c z^2_i}{1+\Gamma^\ep \Gamma_c \sigma_i} \}
\quad \mbox{ and } \quad
(\Gamma^\ep)^2 = \sum_{k=1}^N \frac{n_k^\ep z_k^2}{(1+ \Gamma^\ep \Gamma_c \sigma_k )^2} ,
\end{equation}
with
$$
p(\xi) = \xi \frac{8-9\xi + 3\xi^2}{(1-\xi)^3}  \quad \mbox{ and } \quad
\xi = \xi_c \sum_{j=1}^N n_j^{\ep} (\frac{\sigma_j}{\sigma_c})^3  .
$$
Since $\Phi_i^{0,\ep}=0$ by virtue of formula (\ref{NJ00}) for $n_i^{0,\ep}$,
we identify $\delta\Phi_i^\ep$ with $\Phi_i^\ep$.

\begin{lemma}
\label{lem.lin}
The linearization of (\ref{BP2}-\ref{BP2b}) yields
\begin{gather}
\label{BP301}
\delta n_i^\ep(x) = \sum_{k=1}^N z_k \alpha_{i,k}^{0,\ep}(x)
\Big( \delta \Psi^\ep(x) + \Phi_k^\ep(x) + \Psi^{ext,*}(x) \Big) ,
\end{gather}
with
\begin{gather}
\label{BP302}
\alpha_{i,k}^{0,\ep} = - n_i^{0,\ep} \delta_{ik} + \Be n_i^{0,\ep} n_k^{0,\ep} \sigma_k^3
- \frac{L_B \Gamma_c}{\Ae}  n_i^{0,\ep} n_k^{0,\ep} \\
\times
\left( \frac{z^2_i}{(1+\Gamma^{0,\ep} \Gamma_c \sigma_i)^2} -\Be \Ce \right)
\left( \frac{z^2_k}{(1+\Gamma^{0,\ep} \Gamma_c \sigma_k)^2} -\Be \De \sigma_k^3 \right) ,  \notag
\end{gather}
where
$$
\Be = \frac{\frac{\pi}{6} n_c p^\prime(\xi)}{1 + \xi p^\prime(\xi)}
\ , \quad
\Ce = \sum_{k=1}^N \frac{z^2_k \sigma_k^3 n_k^{0,\ep}}{(1+\Gamma^{0,\ep} \Gamma_c \sigma_k)^2}
\ , \quad
\De = \sum_{k=1}^N \frac{z^2_k n_k^{0,\ep}}{(1+\Gamma^{0,\ep} \Gamma_c \sigma_k)^2}
$$
and
\begin{gather}
\Ae = 2 \Gamma^{0,\ep} + 2\Gamma_c \sum_{k=1}^N \frac{n_k^{0,\ep} z_k^2 \sigma_k}{(1+ \Gamma^{0,\ep} \Gamma_c \sigma_k )^3} - L_B \Gamma_c \sum_{k=1}^N \frac{n_k^{0,\ep} z_k^4}{(1+ \Gamma^{0,\ep} \Gamma_c \sigma_k )^4}  \notag \\
+ L_B \Gamma_c \Be \Ce \sum_{k=1}^N \frac{n_k^{0,\ep} z_k^2}{(1+ \Gamma^{0,\ep} \Gamma_c \sigma_k )^2} . \notag
\end{gather}
Under assumption (\ref{Bound0}) of Theorem \ref{thm.exist3} the coefficient $\Ae$ is positive.

Furthermore, at equilibrium, if we consider $n_i^{0,\ep}$ as a function of $\Psi^{0,\ep}$,
we have
\begin{gather}
\label{BP311}
\frac{d n_i^{0,\ep}}{d \Psi^{0,\ep}} = \sum_{k=1}^N z_k \alpha_{i,k}^{0,\ep} .
\end{gather}
If all ions have the same diameter ($\sigma_k=\sigma_i$ for all $i,k$), then the
coefficients $\alpha_{i,k}^{0,\ep}=\alpha_{k,i}^{0,\ep}$ are symmetric.
\end{lemma}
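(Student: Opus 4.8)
The plan is to linearize the algebraic relations \eqref{BP2}--\eqref{BP2b} around the equilibrium state $(n_i^{0,\ep},\Psi^{0,\ep})$, treating $\gamma_i^\ep$ and $\Gamma^\ep$ as implicit functions of the concentrations. Starting from \eqref{BP2}, I would take the differential of both sides. Writing $\ln(n_i^\ep\gamma_i^\ep) = \ln n_i^0(\infty)\gamma_j^0(\infty) - z_i(\Psi^\ep+\Phi_i^\ep+\Psi^{ext,*})$ and differentiating, the left side gives $\delta n_i^\ep/n_i^{0,\ep} + \delta(\ln\gamma_i^\ep)$, while the right side gives $-z_i(\delta\Psi^\ep+\Phi_i^\ep+\Psi^{ext,*})$ since the equilibrium values of $\Phi_i^\ep$ and $\Psi^{ext,*}$ vanish. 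So the crux is to express $\delta(\ln\gamma_i^\ep)$ in terms of the $\delta n_k^\ep$ and then invert the resulting linear system.

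The second step is the computation of $\delta(\ln\gamma_i^\ep)$. From \eqref{BP2b}, $\ln\gamma_i^\ep = p(\xi) - L_B\Gamma_c z_i^2\Gamma^\ep/(1+\Gamma^\ep\Gamma_c\sigma_i)$, so
\[
\delta(\ln\gamma_i^\ep) = p'(\xi)\,\delta\xi - \frac{L_B\Gamma_c z_i^2}{(1+\Gamma^{0,\ep}\Gamma_c\sigma_i)^2}\,\delta\Gamma^\ep .
\]
Here $\delta\xi = \xi_c\sum_k(\sigma_k/\sigma_c)^3\delta n_k^\ep = \frac{\pi}{6}\sum_k \sigma_k^3\,\delta n_k^\ep \cdot n_c^{-1}\cdot n_c$ (I would be careful with the normalization $\xi_c n_c^{-1}(\sigma_k/\sigma_c)^3 = \frac{\pi}{6}\sigma_k^3$ from \eqref{xic}), which explains the prefactor $\frac{\pi}{6}n_c$ appearing in $\Be$. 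For $\delta\Gamma^\ep$ I differentiate the implicit relation $(\Gamma^\ep)^2 = \sum_k n_k^\ep z_k^2/(1+\Gamma^\ep\Gamma_c\sigma_k)^2$: this yields $2\Gamma^\ep\delta\Gamma^\ep = \sum_k z_k^2\delta n_k^\ep/(1+\cdots)^2 - 2\Gamma_c\delta\Gamma^\ep\sum_k n_k^\ep z_k^2\sigma_k/(1+\cdots)^3$, hence $\delta\Gamma^\ep$ is a linear combination of the $\delta n_k^\ep$ with denominator essentially $2\Gamma^{0,\ep} + 2\Gamma_c\sum_k\cdots$; the extra terms in $\Ae$ come from the fact that $\delta\xi$ also feeds back through $p'(\xi)$ into the $\gamma$'s and hence, once everything is solved simultaneously, into the effective coefficient. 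I would set up the two scalar unknowns $\delta\xi$ and $\delta\Gamma^\ep$ as linear functionals of $(\delta n_k^\ep)$, solve the resulting $2\times 2$-type coupling (the coupling enters because $\Ae$ contains the $\Be\Ce$ correction), and substitute back. Collecting all contributions into the relation $\delta n_i^\ep(1/n_i^{0,\ep}) + (\text{linear in }\delta n_k^\ep) = -z_i(\delta\Psi^\ep+\Phi_i^\ep+\Psi^{ext,*})$ gives a linear system $\sum_k \mathcal{B}_{ik}\delta n_k^\ep = -z_i n_i^{0,\ep}(\cdots)$ whose inverse is, by direct verification, the matrix with entries $z_k\alpha_{i,k}^{0,\ep}/z_i$; equivalently one checks $\sum_k\mathcal{B}_{ik}\alpha_{k,j}^{0,\ep} = -n_i^{0,\ep}\delta_{ij}$, which is where the precise form \eqref{BP302} with the rank-one correction terms is pinned down.

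For positivity of $\Ae$ under \eqref{Bound0}: I would bound the negative term $L_B\Gamma_c\sum_k n_k^{0,\ep}z_k^4/(1+\Gamma^{0,\ep}\Gamma_c\sigma_k)^4$ against the positive terms $2\Gamma^{0,\ep}$ and $2\Gamma_c\sum_k n_k^{0,\ep}z_k^2\sigma_k/(1+\cdots)^3$. Using $z_k^2 \le z_k^2$ and the hypothesis $L_B z_j^2 < (6+4\sqrt2)\sigma_j$ for each $j$, one gets $L_B\Gamma_c n_k z_k^4/(1+\cdots)^4 \le (6+4\sqrt2)\,\Gamma_c n_k z_k^2\sigma_k/(1+\cdots)^4 \le (6+4\sqrt2)\,\Gamma_c n_k z_k^2\sigma_k/(1+\cdots)^3$; comparing with the full positive part and using $(\Gamma^{0,\ep})^2 = \sum_k n_k z_k^2/(1+\cdots)^2$ to relate $2\Gamma^{0,\ep}$ to the sums, the constant $6+4\sqrt2$ is exactly what makes the quadratic inequality (after an AM-GM or discriminant argument on $\Gamma^{0,\ep}$ versus the sums) close with room to spare, the remaining $\Be\Ce$-term being manifestly nonnegative since $p'(\xi)\ge 0$ on $[0,1)$. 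I expect this positivity estimate to be the main obstacle: it requires finding the right way to split and bound the terms so that the sharp constant $6+4\sqrt2$ emerges, rather than the bookkeeping of the linearization itself.

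Finally, formula \eqref{BP311} follows by specializing \eqref{BP301} to the equilibrium path: along equilibrium $\Phi_k^\ep=\Psi^{ext,*}=0$ and $\delta n_i^\ep$, $\delta\Psi^\ep$ become $dn_i^{0,\ep}$, $d\Psi^{0,\ep}$, so \eqref{BP301} reads $dn_i^{0,\ep}/d\Psi^{0,\ep} = \sum_k z_k\alpha_{i,k}^{0,\ep}$. The symmetry claim when all $\sigma_k=\sigma_i=:\sigma$ is read off \eqref{BP302}: the first term $-n_i^{0,\ep}\delta_{ik}$ is symmetric; the second term $\Be n_i^{0,\ep}n_k^{0,\ep}\sigma^3$ is symmetric; and in the third term the two parenthesized factors become $n_i^{0,\ep}n_k^{0,\ep}$ times $\big(z_i^2/(1+\Gamma^{0,\ep}\Gamma_c\sigma)^2 - \Be\Ce\big)\big(z_k^2/(1+\Gamma^{0,\ep}\Gamma_c\sigma)^2 - \Be\De\sigma^3\big)$, and since with equal diameters $\Ce = \sigma^3\De$, both parenthesized expressions coincide with the common factor $\big(z_\bullet^2/(1+\Gamma^{0,\ep}\Gamma_c\sigma)^2 - \Be\sigma^3\De\big)$ evaluated at the respective index, making the product symmetric in $i\leftrightarrow k$.
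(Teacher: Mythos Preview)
Your proposal is correct and follows essentially the same approach as the paper. The paper organizes the linear algebra slightly differently: rather than setting up the full $N\times N$ system in $\delta n_k^\ep$ and verifying its inverse, it proceeds by direct sequential elimination --- first multiplying \eqref{BP303} by $\sigma_i^3$ and summing to express $\sum_k\sigma_k^3\delta n_k^\ep$ (hence $\delta\xi$) in terms of $\delta\Gamma^\ep$ and the forcings, yielding an intermediate expression \eqref{BP3} for $\delta n_i^\ep$ in terms of $\delta\Gamma^\ep$; then substituting \eqref{BP3} into the linearized $\Gamma$-equation to isolate $\delta\Gamma^\ep$ with coefficient $\Ae$; and finally back-substituting. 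This avoids any explicit matrix inversion, but the content is identical to your route. For the positivity of $\Ae$, the paper uses exactly the trick you identify (rewrite $2\Gamma^{0,\ep}=2(\Gamma^{0,\ep})^2/\Gamma^{0,\ep}$ and invoke the defining relation for $(\Gamma^{0,\ep})^2$), and then refers to the polynomial $P(x)=4x^2+(6-L_Bz_j^2/\sigma_j)x+2$ already analyzed in Lemma~\ref{lem.gamma2}, whose positivity on $x\ge 0$ is precisely equivalent to \eqref{Bound0}; your discriminant remark is the same argument. Your treatment of \eqref{BP311} and of the symmetry (via $\Ce=\sigma^3\De$ when all diameters coincide) matches the paper's.
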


\begin{remark}
At equilibrium, the concentrations $n_i^{0,\ep}$, as well as the screening
parameter $\Gamma^{0,\ep}$, depend
only on $\Psi^{0,\ep}$ through the
algebraic equations (\ref{NJ00}), (\ref{NJ001}),
(\ref{gammaeps0}) and (\ref{Gammaeps0}). However, outside equilibrium the
concentrations $n_i^{\ep}$ and the screening parameter $\Gamma^{\ep}$
depend through (\ref{BP2}-\ref{BP2b}) on the entire family
$( \delta \Psi^\ep + \Phi_k^\ep + \Psi^{ext,*} )$, $1\leq k\leq N$.
\end{remark}

\begin{proof}
Linearizing (\ref{BP2}) leads to
$$
\delta n_i^\ep = \frac{-\delta \gamma_i^\ep}{(\gamma_i^{0,\ep})^2} n_i^0(\infty) \gamma_j^0(\infty)
\exp \{ - z_i \Psi^{0,\ep} \} - \frac{z_i n_i^0(\infty)\gamma_j^0(\infty)}{\gamma_i^{0,\ep}}
\exp \{ - z_i \Psi^{0,\ep} \} \Big( \delta \Psi^\ep + \Phi_i^\ep + \Psi^{ext,*} \Big)
$$
which is equivalent to
\begin{gather}
\label{BP303}
\delta n_i^\ep = \frac{-\delta \gamma_i^\ep}{\gamma_i^{0,\ep}} n_i^{0,\ep}
- z_i n_i^{0,\ep} \Big( \delta \Psi^\ep + \Phi_i^\ep + \Psi^{ext,*} \Big) .
\end{gather}
Linearization of the first equation of (\ref{BP2b}) yields
\begin{gather}
\label{BP304}
\frac{\delta \gamma_i^\ep}{\gamma_i^{0,\ep}}
= p^\prime(\xi) \delta\xi - \frac{L_B \Gamma_c z^2_i}{(1+\Gamma^{0,\ep} \Gamma_c \sigma_i)^2} \delta \Gamma^\ep
\quad \mbox{ with } \quad \delta\xi = \frac{\pi}{6} n_c \sum_{k=1}^N \sigma_k^3 \delta n_k^\ep .
\end{gather}
{  Multiplying (\ref{BP303}) by $\sigma_i^3$ and (\ref{BP304})
by $\sigma_i^3n_i^{0,\ep}$, then summing up to eliminate
$\delta \gamma_i^\ep / \gamma_i^{0,\ep}$, gives}
$$
\left( \sum_{k=1}^N \sigma_k^3 \delta n_k^\ep \right)
\left( 1 + \frac{\pi}{6} n_c p^\prime(\xi)  \sum_{k=1}^N \sigma_k^3 n_k^{0,\ep} \right)
= - \sum_{k=1}^N \sigma_k^3 z_k n_k^{0,\ep} \Big( \delta \Psi^\ep + \Phi_k^\ep + \Psi^{ext,*} \Big)
+ \delta \Gamma^\ep \sum_{k=1}^N \frac{L_B \Gamma_c z^2_k \sigma_k^3 n_k^{0,\ep}}{(1+\Gamma^{0,\ep} \Gamma_c \sigma_k)^2} ,
$$
from which, together with (\ref{BP303}), we deduce
\begin{gather}
\label{BP3}
\delta n_i^\ep(x) = - z_i n_i^{0,\ep}(x)
\Big( \delta \Psi^\ep(x) + \Phi_i^\ep(x) + \Psi^{ext,*}(x) \Big)
+ \frac{L_B \Gamma_c z^2_i n_i^{0,\ep}(x)}{(1+\Gamma^{0,\ep}(x) \Gamma_c \sigma_i)^2} \delta \Gamma^\ep(x) \\
+ \Be n_i^{0,\ep}(x) \sum_{k=1}^N \sigma_k^3 z_k n_k^{0,\ep}(x) \Big( \delta \Psi^\ep(x) + \Phi_k^\ep(x) + \Psi^{ext,*}(x) \Big)
- \Be \Ce n_i^{0,\ep}(x) L_B \Gamma_c \delta \Gamma^\ep(x) . \notag
\end{gather}
Next, we linearize the second formula of (\ref{BP2b}) to obtain
$$
2 \Gamma^{0,\ep} \delta\Gamma^\ep = \sum_{k=1}^N \left(
\frac{z_k^2 \delta n_k^\ep}{(1+ \Gamma^{0,\ep} \Gamma_c \sigma_k )^2} -
\frac{2 n_k^{0,\ep} z_k^2 \sigma_k \Gamma_c}{(1+ \Gamma^{0,\ep} \Gamma_c \sigma_k )^3}
\delta\Gamma^\ep \right) .
$$
Combining it with (\ref{BP3}) leads to
\begin{gather}
\label{BP3b}
\Ae(x) \, \delta \Gamma^\ep(x)  = -
\sum_{k=1}^N \frac{n_k^{0,\ep}(x) z_k^3}{(1+ \Gamma^{0,\ep}(x) \Gamma_c \sigma_k )^2}
\Big( \delta \Psi^\ep(x) + \Phi_k^\ep(x) + \Psi^{ext,*}(x) \Big) \\
+ \Be(x) \De(x) \sum_{k=1}^N n_k^{0,\ep}(x) z_k\sigma_k^3
\Big( \delta \Psi^\ep(x) + \Phi_k^\ep(x) + \Psi^{ext,*}(x) \Big) . \notag
\end{gather}
Eventually, plugging (\ref{BP3b}) into (\ref{BP3}) yields (\ref{BP301}) and (\ref{BP302}).

Since we divide by $\Ae$ we check that it does not vanish in some range of the
physical parameters. Using  definition
(\ref{BP2b}) of $(\Gamma^{0,\ep})^2$ in the equality $2\Gamma^{0,\ep}=2(\Gamma^{0,\ep})^2/\Gamma^{0,\ep}$
allows us to rewrite the coefficient $\Ae$ as
\begin{gather}
\Ae(x) = \Gamma_c \sum_{k=1}^N \frac{n_k^{0,\ep} z_k^2}{(1+ \Gamma^{0,\ep} \Gamma_c \sigma_k)^2}
\left( \frac{2}{\Gamma^{0,\ep} \Gamma_c} + \frac{2 \sigma_k}{(1+ \Gamma^{0,\ep} \Gamma_c \sigma_k)}
- \frac{L_B z_k^2}{(1+ \Gamma^{0,\ep} \Gamma_c \sigma_k )^2} \right) \notag \\
+ L_B \Gamma_c \Be \Ce \sum_{k=1}^N \frac{n_k^{0,\ep} z_k^2}{(1+ \Gamma^{0,\ep} \Gamma_c \sigma_k )^2} , \notag
\end{gather}
where each term in the sum of the first line is positive under the same condition (\ref{Bound0})
and same proof as in Lemma \ref{lem.gamma2}.

The computation leading to (\ref{BP311}) is completely similar. Finally, the symmetry
relation $\alpha_{i,k}^{0,\ep}=\alpha_{k,i}^{0,\ep}$ is obvious from formula (\ref{BP302})
when $\sigma_k=\sigma_i$ for all $i,k$.
\end{proof}

\begin{remark}
In the ideal case, $\gamma_i^\ep\equiv1$, Lemma \ref{lem.lin} simplifies
a lot since $\alpha_{i,k}^{0,\ep}= - n_i^{0,\ep} \delta_{ik}$ which implies
there is no coupling between the various ionic potentials in the definition
of a single species concentration.
\end{remark}

Thanks to the definition (\ref{BP2}) of the ionic potential,
the linearization of the convection-diffusion equation (\ref{Nernst1}) is easy
because the diffusive flux simplifies as
$$
M_j^\ep = \ln \left(n_j^\ep \gamma_j^\ep e^{z_j \Psi^\ep } \right)
= \ln \left( n_j^0(\infty)\gamma_j^0(\infty) \right) - z_j ( \Phi_j^\ep + \Psi^{ext,*} ) .
$$
Furthermore, the equilibrium solution satisfies $\nabla M_j^{0,\ep} = 0$, which implies
\begin{gather}
\mbox{div } \bigg( \sum_{j=1}^N n_i^{0,\ep} K_{ij}^{0,\ep} z_j \nabla (\Phi_j^\ep + \Psi^{ext,*} )
+ \pei n_i^{0,\ep} \mathbf{u}^\ep  \bigg) =0 \quad \mbox{ in } \quad \Oe
\label{lin1}\\
K_{ij}^{0,\ep} = \bigg( \frac{D_i^0}{k_B T} \delta_{ij} + \Omega_{ij}^0 \bigg) \bigg( 1+ \mathcal{R}_{ij}^0 \bigg), \; i,j =1, \dots , N . \label{lin2}
\end{gather}
The linearization of the Stokes equation (\ref{EPPR1}) is more tricky.
We first get
\begin{gather}
\ep^2 \Delta \mathbf{u}^\ep - \nabla \delta p^\ep = \mathbf{f}^* +
\sum_{j=1}^N z_j \left( \delta n_j^\ep \nabla\Psi^{0,\ep} + n_j^{0,\ep} \nabla\delta\Psi^\ep \right) \; \mbox{in} \; \O^\ep , \label{lin3} \\
\div \mathbf{u}^\ep =0 \quad \mbox{ in }  \Omega^\ep , \quad
\mathbf{u}^\ep =0 \ \mbox{ on } \p \Omega^\ep \setminus \p \O . \notag
\end{gather}
We rewrite the sum on the right hand side of (\ref{lin3}) as
\begin{gather}
\label{lin71}
\nabla \left( \sum_{j=1}^N z_j n_j^{0,\ep} \delta\Psi^\ep \right) + S^\ep
\quad \mbox{ with } \quad S^\ep = \sum_{j=1}^N z_j
\left( \delta n_j^\ep \nabla\Psi^{0,\ep} - \delta\Psi^\ep \nabla n_j^{0,\ep} \right) .
\end{gather}
Since $\nabla n_j^{0,\ep} = \frac{d n_j^{0,\ep}}{d \Psi^{0,\ep}} \nabla\Psi^{0,\ep}$
at equilibrium, from Lemma \ref{lem.lin} we deduce
\begin{gather}
\label{lin72}
S^\ep = \sum_{j,k=1}^N z_j \left( z_k \alpha_{j,k}^{0,\ep}
\Big( \delta \Psi^\ep + \Phi_k^\ep + \Psi^{ext,*} \Big)
- z_k \alpha_{j,k}^{0,\ep} \delta\Psi^\ep  \right) \nabla\Psi^{0,\ep} \notag \\
= \sum_{j,k=1}^N z_j z_k \alpha_{j,k}^{0,\ep}
\Big( \Phi_k^\ep + \Psi^{ext,*} \Big)  \nabla\Psi^{0,\ep}
\end{gather}
If all ions have the same diameter, the coefficients $\alpha_{j,k}^{0,\ep}$ are
symmetric, i.e. $\alpha_{j,k}^{0,\ep} = \alpha_{k,j}^{0,\ep}$, so we deduce
$$
S^\ep = \sum_{k=1}^N z_k \frac{d n_k^{0,\ep}}{d \Psi^{0,\ep}}
\Big( \Phi_k^\ep + \Psi^{ext,*} \Big)  \nabla\Psi^{0,\ep}
= \sum_{k=1}^N z_k \Big( \Phi_k^\ep + \Psi^{ext,*} \Big) \nabla n_k^{0,\ep} .
$$
Thus, we rewrite (\ref{lin3}) as
\begin{gather}
\ep^2 \Delta \mathbf{u}^\ep - \nabla P^\ep = \mathbf{f}^* -
\sum_{j=1}^N z_j n_j^{0,\ep} \nabla \left( \Phi_j^\ep + \Psi^{ext,*} \right) , \label{lin4}
\end{gather}
where the new pressure $P^\ep$ is defined by
$$
P^\ep = \delta p^\ep +
\sum_{j=1}^N z_j n_j^{0,\ep} \left( \delta\Psi^\ep + \Phi_j^\ep + \Psi^{ext,*} \right)  .
$$

\begin{remark}
When the ion diameters are different, we can merely introduce
nonlinear functions $F_j$ (defined by their derivatives) such that
$$
S^\ep = \sum_{j=1}^N z_j \left( \Phi_j^\ep + \Psi^{ext,*} \right)
\nabla \Big( F_j(\Psi^{0,\ep}) \Big) .
$$
In general it is not clear whether $F_j=n_j$.
\end{remark}

Of course, one can deduce a linearized equation for $\delta\Psi^\ep$ from
the non-linear Poisson equation (\ref{EPPR3b}) too. But, since $\delta\Psi^\ep$
does not enter the previous equations (upon redefining the pressure $P^\ep$),
it is {\bf decoupled} from the main unknowns $\mathbf{u}^\ep$, $P^\ep$ and $\Phi_i^\ep$.
Therefore it is not necessary to write its equation in details.

To summarize, we have just proved the following result.

\begin{proposition}
\label{prop.linear}
Assume that all ions have the same diameter.
The linearized system, around the equilibrium solution of Section \ref{sec3},
of the electrokinetic equations (\ref{EPPR1}-\ref{OffdiagSD}) is
\begin{gather}
\ep^2 \Delta \mathbf{u}^\ep - \nabla P^\ep = \mathbf{f}^* -
\sum_{j=1}^N z_j n_j^{0,\ep} \nabla \left( \Phi_j^\ep + \Psi^{ext,*} \right) \;
\mbox{in} \; \O^\ep , \label{lin4b} \\
\div \mathbf{u}^\ep =0 \quad \mbox{ in }  \Omega^\ep , \quad
\mathbf{u}^\ep =0 \ \mbox{ on } \p \Omega^\ep \setminus \p \O , \label{lin4c} \\
\div \, n_i^{0,\ep} \bigg( \sum_{j=1}^N K_{ij}^{0,\ep} z_j \nabla (\Phi_j^\ep + \Psi^{ext,*} )
+ \pei \mathbf{u}^\ep  \bigg) =0 \quad \mbox{ in } \Oe , \ i=1, \dots , N , \label{lin1b}\\
\sum_{j=1}^N  K_{ij}^{0,\ep} z_j \nabla (\Phi_j^\ep + \Psi^{ext,*} ) \cdot \nu = 0
\ \mbox{ on } \p \Omega^\ep \setminus \p \O , \label{bc.phijeps} \\
\mathbf{u}^\ep , \ P^\ep , \ \Phi_j^\ep \ \mbox{ are } \; \Omega-\mbox{periodic} , \label{LIN4}
\end{gather}
where the coefficients $n_j^{0,\ep}$ and $K_{ij}^{0,\ep}$ (defined by (\ref{lin2}))
are evaluated at equilibrium.
\end{proposition}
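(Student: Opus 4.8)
The plan is to assemble the claimed linearized system by inserting the perturbation ansatz $n_i^\ep = n_i^{0,\ep} + \delta n_i^\ep$, $\Psi^\ep = \Psi^{0,\ep} + \delta\Psi^\ep$, $\mathbf{u}^\ep = \delta\mathbf{u}^\ep$, $p^\ep = p^{0,\ep} + \delta p^\ep$ into (\ref{EPPR1})--(\ref{OffdiagSD}), keeping only terms linear in the perturbations, and then treating the three blocks (charge density, convection--diffusion, Stokes) separately. The key structural device is the ionic potential $\Phi_i^\ep$ defined in (\ref{BP2}), which is the natural linearization variable suggested by the equilibrium relation (\ref{NJ00}); Lemma \ref{lem.lin} already supplies everything needed from the charge-density block, namely the affine law $\delta n_i^\ep = \sum_k z_k \alpha_{i,k}^{0,\ep}(\delta\Psi^\ep + \Phi_k^\ep + \Psi^{ext,*})$, the positivity of $\Ae$ under hypothesis (\ref{Bound0}) (so that the $\alpha_{i,k}^{0,\ep}$ are well defined), the identity (\ref{BP311}), and --- essential for the Stokes block --- the symmetry $\alpha_{i,k}^{0,\ep} = \alpha_{k,i}^{0,\ep}$ valid when all ion diameters coincide.

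For the convection--diffusion block I would substitute (\ref{BP2}) into $M_j^\ep = \ln(n_j^\ep \gamma_j^\ep e^{z_j\Psi^\ep})$, which collapses it to $\ln(n_j^0(\infty)\gamma_j^0(\infty)) - z_j(\Phi_j^\ep + \Psi^{ext,*})$, so that its gradient is already linear. Combining this with $\nabla M_j^{0,\ep} = 0$ and $\mathbf{u}^{0,\ep} = 0$ at equilibrium, the linearization of (\ref{Nernst1})--(\ref{Nernst1bc}) freezes the coefficients $n_i^{0,\ep} K_{ij}^{0,\ep}$ at equilibrium and yields (\ref{lin1b})--(\ref{bc.phijeps}) directly; note that no smallness of $\Sigma^*$ is invoked, since $\Sigma^*$ enters only through the reference state $\Psi^{0,\ep}$.

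The delicate block is Stokes. Linearizing (\ref{EPPR1}) gives (\ref{lin3}) with right-hand side $\mathbf{f}^* + \sum_j z_j(\delta n_j^\ep \nabla\Psi^{0,\ep} + n_j^{0,\ep}\nabla\delta\Psi^\ep)$, which I would reorganize as $\nabla(\sum_j z_j n_j^{0,\ep}\delta\Psi^\ep) + S^\ep$ with $S^\ep = \sum_j z_j(\delta n_j^\ep \nabla\Psi^{0,\ep} - \delta\Psi^\ep\nabla n_j^{0,\ep})$. Feeding in the expression for $\delta n_j^\ep$ from Lemma \ref{lem.lin} together with $\nabla n_j^{0,\ep} = (\sum_k z_k\alpha_{j,k}^{0,\ep})\nabla\Psi^{0,\ep}$ (i.e. (\ref{BP311})), the $\delta\Psi^\ep$-terms cancel and $S^\ep = \sum_{j,k} z_j z_k\alpha_{j,k}^{0,\ep}(\Phi_k^\ep + \Psi^{ext,*})\nabla\Psi^{0,\ep}$. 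At this point the symmetry $\alpha_{j,k}^{0,\ep} = \alpha_{k,j}^{0,\ep}$ --- which holds \emph{precisely because} all diameters are equal --- lets me recognize $S^\ep = \sum_k z_k(\Phi_k^\ep + \Psi^{ext,*})\nabla n_k^{0,\ep}$, hence a sum of the gradient $\sum_k\nabla(z_k n_k^{0,\ep}(\Phi_k^\ep + \Psi^{ext,*}))$ and the term $-\sum_k z_k n_k^{0,\ep}\nabla(\Phi_k^\ep + \Psi^{ext,*})$. Absorbing both gradient pieces into the redefined pressure $P^\ep = \delta p^\ep + \sum_j z_j n_j^{0,\ep}(\delta\Psi^\ep + \Phi_j^\ep + \Psi^{ext,*})$ converts (\ref{lin3}) into (\ref{lin4b}). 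I expect this manipulation to be the main obstacle: without the equal-diameter hypothesis $\alpha_{j,k}^{0,\ep}$ is not symmetric, $S^\ep$ is not the gradient of anything recognizable (only of the nonlinear functions $F_j$ of the subsequent remark), and $\delta\Psi^\ep$ no longer decouples from $(\mathbf{u}^\ep, P^\ep, \Phi_i^\ep)$. Finally, collecting the boundary and periodicity conditions from (\ref{EPPR6}), (\ref{Nernst1bc}) and (\ref{EPPR6bc}), and observing that $\delta\Psi^\ep$ has dropped out of the system, yields (\ref{lin4b})--(\ref{LIN4}).
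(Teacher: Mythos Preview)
Your proposal is correct and follows essentially the same approach as the paper: the derivation in the text preceding the proposition performs exactly the same three-block linearization, using the ionic potential to collapse $M_j^\ep$, the decomposition (\ref{lin71}) with the auxiliary term $S^\ep$, and the symmetry of $\alpha_{j,k}^{0,\ep}$ from Lemma~\ref{lem.lin} (valid under the equal-diameter hypothesis) to absorb $S^\ep$ into the redefined pressure $P^\ep$. Your identification of the equal-diameter assumption as the crux of the Stokes manipulation is precisely the point the paper makes in the remark following (\ref{lin4}).
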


This is the system of equations that we are going to homogenize in the next
sections. It is the extension to the non-ideal case of a similar ideal system
previously studied in \cite{AMP}, \cite{ABDMP}, \cite{AM:03}, \cite{A:01},
\cite{CSTA:96}, \cite{GCA:06}, \cite{MSA:01}, \cite{RPA:06}, \cite{LC:06}.
The mathematical structure of system (\ref{lin4b})-(\ref{LIN4}) is essentially
the same as in the ideal case. The only difference is the coupling
of the diffusion equations through the tensor $K_{ij}^{0,\ep}$.
Note that the tensor $K_{ij}^{0,\ep}$ is related to the original Onsager tensor
$L_{ij}$, defined in (\ref{Offdiag}): upon adimensionalization and evaluation
at equilibrium, $L_{ij}$ {  becomes $L_{ij}^{0,\ep} = n_i^{0,\ep} K_{ij}^{0,\ep} D_i^0 /(k_B T)$.
In particular, the tensor $L_{ij}^{0,\ep}$ inherits from the symmetry of $L_{ij}$
(it is thus symmetric positive definite).}

Next, we establish the variational formulation of (\ref{lin4b})-(\ref{LIN4})
and prove that it admits a unique  solution.
The functional spaces related to the velocity field are
$$
W^\ep =\{ \mathbf{v} \in H^1 (\Oe)^d , \; \mathbf{v}=0 \mbox{ on } \p \Oe \setminus \p\O , \;
\Omega-\mbox{periodic in } \; x  \}
$$
and
$$
H^\ep = \{ \mathbf{v} \in W^\ep , \; \div \,\mathbf{v}=0 \; \hbox{ in } \; \Oe \} .
$$
The variational formulation of (\ref{lin4b})-(\ref{LIN4}) is:
find $\mathbf{u}^\ep \in H^\ep$ and $\{ \Phi_j^\ep \}_{j=1, \dots , N}  \in H^1 (\Oe)^N$,
$\Phi_j^\ep$ being $\Omega$-periodic, such that, for any test functions $\mathbf{v} \in H^\ep$ and
$\{ \phi_j \}_{j=1, \dots , N} \in H^1 (\Oe)^N$, $\phi_j$ being $\Omega$-periodic,
$$
a\left( ( \mathbf{u}^\ep , \{ \Phi_j^\ep \} ) , ( \mathbf{v}, \{ \phi_j \} ) \right) =
\langle \mathcal{L}, ( \mathbf{v}, \{ \phi_j \} ) \rangle ,
$$
where the bilinear form $a$ and the linear form $\mathcal{L}$ are defined by
\begin{gather}
a\left( ( \mathbf{u}^\ep , \{ \Phi_j^\ep \} ) , ( \mathbf{v}, \{ \phi_j \} ) \right) := \ep^2 \int_{\Oe} \nabla \mathbf{u}^\ep : \nabla \mathbf{v} \, dx
+ \sum_{i,j=1}^N \frac{z_iz_j}{\pei} \int_{\Oe} n_i^{0,\ep} K_{ij}^{0,\ep} \nabla \Phi_j^\ep \cdot \nabla \phi_i \, dx
\notag \\
 + \sum_{j=1}^N z_j \int_{\Oe} n_j^{0,\ep} \big( \mathbf{u}^\ep \cdot \nabla \phi_j -
\mathbf{v} \cdot \nabla \Phi_j^\ep \big) \ dx  \notag \\
\langle \mathcal{L}, ( \mathbf{v}, \{ \phi_j \} ) \rangle :=
\sum_{j=1}^N z_j \int_{\Oe} n_j^{0,\ep} \mathbf{E}^* \cdot \mathbf{v} \, dx
-\! \sum_{i,j=1}^N \frac{z_iz_j}{\pei}  \int_{\Oe} n_i^{0,\ep} K_{ij}^{0,\ep} \mathbf{E}^*
\cdot \nabla \phi_i \, dx -\! \int_{\Oe} \mathbf{f}^*\cdot\mathbf{v} \, dx ,
\label{VAREP}
\end{gather}
where, for simplicity, we denote by $\mathbf{E}^*$ the electric field corresponding
to the potential $\Psi^{ext,*}$, i.e.,
$
\mathbf{E}^*(x) = \nabla \Psi^{ext,*}(x) .
$

\begin{lemma}
\label{lem.existvf}
For sufficiently small values of $n_c>0$ and $\xi_c>0$, and under
assumption (\ref{Bound0}), there exists a
unique solution of (\ref{lin4b})-(\ref{LIN4}), $\mathbf{u}^\ep \in H^\ep$ and
$\{ \Phi_j^\ep \}_{j=1, \dots , N} \in H^1(\Oe)^N$, $\Phi_j^\ep$ being $\Omega$-periodic.
Furthermore, there exists a positive constant $C$, independent of $\ep$, such that
\begin{gather}
\| \mathbf{u}^\ep \|_{L^2 (\Oe)^d} +
\ep \|\nabla\mathbf{u}^\ep \|_{L^2 (\Oe)^{d^2}} +
\max_{1\leq j\leq N} \| \Phi_j^\ep \|_{H^1 (\Oe)} \leq C
\bigg( \| \mathbf{E}^* \|_{L^2 (\O )^d} + \| \mathbf{f}^* \|_{L^2 (\O )^d} \bigg).
\label{Apriori}
\end{gather}
\end{lemma}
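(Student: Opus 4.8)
The plan is to recast the linearized system (\ref{lin4b})--(\ref{LIN4}) as a single variational problem on a Hilbert space and apply the Lax--Milgram lemma (the bilinear form $a$ is not symmetric, since the velocity--potential coupling enters with opposite signs in $\mathbf{u}^\ep\cdot\nabla\phi_j$ and $\mathbf{v}\cdot\nabla\Phi_j^\ep$, but that antisymmetry is precisely what makes those terms vanish on the diagonal). First I would fix the functional framework: the pressure $P^\ep$ is eliminated by looking for $\mathbf{u}^\ep$ in the divergence-free space $H^\ep$, and is recovered afterwards, uniquely up to an additive constant, from the orthogonality of the Stokes residual to $H^\ep$ through the standard de Rham / Ne\v{c}as argument in the periodically perforated domain. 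Since (\ref{lin4b})--(\ref{LIN4}) depends on the ionic potentials only through their gradients, each $\Phi_j^\ep$ is defined only up to an additive constant, so I would work with $\Phi_j^\ep$ in the space $V$ of $\Omega$-periodic $H^1(\Oe)$ functions with zero mean, normed by $\|\nabla\cdot\|_{L^2(\Oe)}$, and interpret uniqueness in that sense.

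Continuity of $a$ on $H^\ep\times V^N$ and of $\mathcal L$ is then a matter of Cauchy--Schwarz, using that the frozen equilibrium coefficients $n_j^{0,\ep}$, $\Gamma^{0,\ep}$ and $K_{ij}^{0,\ep}$ are bounded in $L^\infty(\Oe)$ uniformly in $\ep$ and that $n_j^{0,\ep}\geq c_0>0$, which follows from the algebraic relations (\ref{NJ00})--(\ref{Gammaeps0}), the uniform $L^\infty$ bound on $\Psi^{0,\ep}$ from Theorem \ref{thm.exist3}, and Lemma \ref{lem.ideal}. For coercivity, evaluating $a$ on the diagonal annihilates the coupling terms and leaves
$$
a\big((\mathbf{v},\{\phi_j\}),(\mathbf{v},\{\phi_j\})\big)=\ep^2\int_{\Oe}|\nabla\mathbf{v}|^2\,dx+\sum_{i,j=1}^N\frac{z_iz_j}{\pei}\int_{\Oe}n_i^{0,\ep}K_{ij}^{0,\ep}\,\nabla\phi_j\cdot\nabla\phi_i\,dx .
$$
Here $\frac{z_iz_j}{\pei}n_i^{0,\ep}K_{ij}^{0,\ep}$ is symmetric in $(i,j)$, being proportional to $z_iz_jL_{ij}^{0,\ep}$; and for $n_c,\xi_c$ small, Lemma \ref{lem.ideal} shows that $L_{ij}^{0,\ep}(x)$ is a uniformly positive definite perturbation of the positive diagonal matrix $\mathrm{diag}(n_i^{0,\ep})$, so pointwise in $x$ the second term dominates $c\sum_i z_i^2|\nabla\phi_i|^2\geq c(\min_i z_i^2)\sum_i|\nabla\phi_i|^2$ with $c$ independent of $\ep$ (all valencies being nonzero). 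Together with the classical Poincaré inequality in perforated domains, $\|\mathbf{v}\|_{L^2(\Oe)}\leq C\ep\|\nabla\mathbf{v}\|_{L^2(\Oe)}$, and Poincaré--Wirtinger for the $\phi_j$, this gives coercivity, and Lax--Milgram yields existence and uniqueness of $(\mathbf{u}^\ep,\{\Phi_j^\ep\})$.

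For the a priori bound (\ref{Apriori}) I would test with $(\mathbf{v},\{\phi_j\})=(\mathbf{u}^\ep,\{\Phi_j^\ep\})$: coercivity bounds the left side below by $\ep^2\|\nabla\mathbf{u}^\ep\|_{L^2(\Oe)}^2+c\sum_j\|\nabla\Phi_j^\ep\|_{L^2(\Oe)}^2$, while, after applying the perforated-domain Poincaré inequality to $\mathbf{u}^\ep$, the right side is bounded above by $C(\|\mathbf{E}^*\|_{L^2(\O)}+\|\mathbf{f}^*\|_{L^2(\O)})\big(\ep\|\nabla\mathbf{u}^\ep\|_{L^2(\Oe)}+\sum_j\|\nabla\Phi_j^\ep\|_{L^2(\Oe)}\big)$; absorbing the linear factor, then invoking Poincaré--Wirtinger to control $\|\Phi_j^\ep\|_{H^1}$ and the perforated Poincaré to control $\|\mathbf{u}^\ep\|_{L^2}$, gives (\ref{Apriori}) with an $\ep$-independent constant. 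The only genuinely delicate step is the one hidden in the smallness assumptions: establishing that $\Psi^{0,\ep}$ is bounded in $L^\infty(\Oe)$ uniformly in $\ep$ (from the invariant-region estimates behind Theorem \ref{thm.exist3}) and that the frozen Onsager matrix $L_{ij}^{0,\ep}(x)$ is positive definite with a lower bound uniform in both $x$ and $\ep$ --- which is exactly what the $O(\sqrt{\xi_c})$ expansion of Lemma \ref{lem.ideal}, valid for $n_c$ and $\xi_c$ small, delivers.
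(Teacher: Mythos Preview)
Your proof is correct and follows essentially the same approach as the paper: establish uniform $L^\infty$ bounds on $\Psi^{0,\ep}$ (hence on $n_j^{0,\ep}$, $\Gamma^{0,\ep}$, $K_{ij}^{0,\ep}$) via Theorem~\ref{thm.exist3}, use smallness of $n_c,\xi_c$ to make $K_{ij}^{0,\ep}$ a positive-definite perturbation of the identity, observe that the velocity--potential coupling is skew-symmetric so coercivity of $a$ reduces to the Stokes and diffusion blocks, and conclude by Lax--Milgram plus the scaled Poincar\'e inequality in the perforated domain. The paper's own proof is terse and refers to the ideal case in \cite{AMP} for the a~priori estimates, whereas you spell out the pressure recovery, the zero-mean normalization of the $\Phi_j^\ep$, and the Poincar\'e/Poincar\'e--Wirtinger steps explicitly, but the substance is the same.
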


\begin{proof}
Assumption (\ref{Bound0}) and $\xi_c>0$ small implies that the potential $\Psi^{0,\ep}$
is bounded in $L^\infty(\Oe)$ (see Theorem \ref{thm.main}).
The same holds true for $\xi$ and $\Gamma^{0,\ep}$
which are algebraic functions of $\Psi^{0,\ep}$. Thus, the concentrations
$n_j^{0,\ep}$, defined by (\ref{NJ0HS}) are uniformly positive and bounded
in $L^\infty(\Oe)$. Due to the structure of ${\bf\Omega}^c_{ij}$, ${\bf\Omega}^{HS}_{ij}$
and $\mathcal{R}_{ij}$, these coefficients, evaluated at equilibrium, are arbitrary
small in $L^\infty(\Oe)$ for small $n_c$. Consequently, the tensor $K_{ij}^{0,\ep}$
is positive definite (as a perturbation of the identity) and the bilinear form $a$
is coercive for $n_c \leq n_c^{\rm cr}$. The rest of the proof, including the
a priori estimates, is similar to
the ideal case, studied in \cite{AMP}, where we had $K_{ij}^{0,\ep}=\delta_{ij}$.
\end{proof}

\section{Homogenization}
\label{Passlimit}

In the previous sections \ref{sec3} and \ref{sec4} we did not use our
assumption that the porous medium and the surface charge distribution
are $\ep$-periodic (see the end of
section \ref{sec2}). Our further analysis relies  crucially  on this $\ep$-periodicity
hypothesis. Theorem \ref{thm.exist3} gives the existence of a solution
to the Poisson-Boltzmann equation {  (\ref{BP0})} but not its uniqueness.
Nevertheless, we can define a particular solution of (\ref{BP0}), which
is $\ep$-periodic,
\begin{equation}
\label{eq.psi0}
\Psi^{0,\ep} (x) = \Psi^{0}(\frac{x}{\ep}) ,
\end{equation}
where $\Psi^{0}(y)$ is a solution of the unit cell Poisson-Boltzmann
equation
\begin{equation}
\label{BP122}
\left\{ \begin{array}{ll}
\dsp - \Delta_y \Psi^{0}(y) = \beta \sum_{j=1}^N z_j n^0_j(y) & \mbox{ in } Y_F , \\
\dsp \nabla_y \Psi^{0} \cdot \nu = -N_\sigma \Sigma^*(y) & \mbox{ on } \partial Y_F
\setminus \partial Y ,\\
\dsp y \to \Psi^{0}(y) \; \mbox{ is } 1-\mbox{periodic,} & \\
\dsp n^0_j(y) = n_j^0(\infty)\gamma_j^0(\infty) \frac{\exp\left\{ - z_j \Psi^{0}(y) \right\} }{\gamma^0_j(y)} ,&
\end{array} \right.
\end{equation}
with the activity coefficient defined by
$$
\gamma^0_j(y) = \gamma^{HS}(y) \exp \{ - \frac{L_B \Gamma^0(y) \Gamma_c z^2_j}{(1+\Gamma^0(y) \Gamma_c \sigma_j)} \} \quad \mbox{ and } \quad (\Gamma^0(y))^2 =
\sum_{k=1}^N \frac{n^0_k(y) z_k^2}{(1+ \Gamma_c \Gamma^0(y) \sigma_k )^2} ,
$$
and
$$
\gamma^{HS} = \exp \{ p(\xi) \} \quad \mbox{ with } \quad
p(\xi) = \xi \frac{8-9\xi + 3\xi^2}{(1-\xi)^3}  \quad \mbox{ and } \quad
\xi(y) = \frac{\pi n_c}{6} \sum_{k=1}^N n_k^0(y) \sigma_k^3  .
$$

The formal two-scale asymptotic expansion method \cite{blp}, \cite{hornung},
\cite{SP80} can be applied to system (\ref{lin4b})-(\ref{LIN4}) as in
the ideal case studied by \cite{LC:06}, \cite{MM:02}, \cite{MM:03},
\cite{MM:06}, \cite{MM:08}, \cite{AMP} and \cite{ABDMP}.
Introducing the fast variable $y=x/\ep$, it assumes that the solution of
(\ref{lin4b})-(\ref{LIN4}) is given by
{
\begin{equation}
\label{eq.ansatz}
   \left\{
     \begin{array}{l}
       \dsp \mathbf{u}^\ep (x) = \mathbf{u}^0 (x,x/\ep) + \ep \mathbf{u}^1 (x,x/\ep) +\dots ,  \\
       \dsp P^\ep (x) = p^0 (x) + \ep p^1 (x,x/\ep) + \dots , \\
       \dsp \Phi^\ep_j (x) = \Phi^0_j (x) + \ep \Phi^1_j (x,x/\ep) +\dots .
     \end{array}
   \right.
\end{equation}
}
We then plug this ansatz in the equations (\ref{lin4b})-(\ref{LIN4}) and
use the chain-rule lemma for a function $\phi(x,\frac{x}{\ep})$
$$
\nabla \left( \phi(x,\frac{x}{\ep}) \right) =
\left( \nabla_x \phi + \frac{1}{\ep} \nabla_y \phi \right) (x,\frac{x}{\ep}) .
$$
Identifying the various powers of $\ep$ we obtain a cascade of
equations from which we retain only the first ones that constitute
the following two-scale homogenized problem. This type of calculation
is classical and we do not reproduce it here. It can be made rigorous
thanks to the notion of two-scale convergence \cite{All92}, \cite{NGU}.

\begin{proposition}
\label{thm.2sc}
From each bounded sequence $\{w^\ep\}$ in $L^2(\Omega^\ep)$ one can extract a
subsequence which two-scale converges to a limit $w\in L^2(\Omega\times Y_F)$
in the sense that
$$
\lim_{\ep\rightarrow 0}\int_{\Omega^\ep} w^\ep(x)\varphi\left(x, \frac{x}{\ep}\right)\, dx
= \int_{\Omega} \int_{Y_F} w(x,y)\varphi(x,y)\,dy\,dx
$$
for any $\varphi\in L^2\big(\Omega;C_{\rm per}(Y)\big)$ (``per'' denotes $1$-periodicity).
\end{proposition}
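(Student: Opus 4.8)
The plan is to follow the classical argument of Nguetseng and Allaire \cite{NGU}, \cite{All92}, adapted to the perforated domain $\Oe$: the candidate two-scale limit will be produced as the Riesz representative of a bounded linear functional obtained by extraction. For $\varphi \in L^2(\Omega; C_{\rm per}(Y))$ I would set
$$
\mu^\ep(\varphi) := \int_{\Oe} w^\ep(x)\,\varphi\Big(x, \frac{x}{\ep}\Big)\,dx .
$$
This makes sense because, for such $\varphi$, the map $x \mapsto \varphi(x,x/\ep)$ is measurable and lies in $L^2(\Omega)$; and by Cauchy--Schwarz,
$$
|\mu^\ep(\varphi)| \le \|w^\ep\|_{L^2(\Oe)}\, \big\|\varphi(\cdot, \cdot/\ep)\big\|_{L^2(\Oe)} .
$$

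The key ingredient is the \emph{oscillation lemma}. For $\varphi$ in a convenient dense subclass, say $\varphi \in C\big(\overline{\Omega}; C_{\rm per}(Y)\big)$ (or $\mathcal{D}(\Omega; C^\infty_{\rm per}(Y))$), I would prove
$$
\lim_{\ep\to 0} \int_{\Oe} \Big|\varphi\Big(x, \frac{x}{\ep}\Big)\Big|^2\,dx
= \int_{\Omega}\int_{Y_F} |\varphi(x,y)|^2\,dy\,dx .
$$
Indeed, since $L_k/\ep \in \mathbb{Z}$ for every $k$, the domain $\Omega$ is exactly tiled by the cells $\ep(j + Y)$, $j\in\mathbb{Z}^d$, and $\Oe$ by the fluid cells $\ep(j + Y_F)$; on the $j$-th fluid cell the change of variables $x=\ep(j+y)$ shows that the integral equals $\ep^d\int_{Y_F}|\varphi(\ep(j+y),y)|^2\,dy$, which is $\ep^d\int_{Y_F}|\varphi(\ep j,y)|^2\,dy$ up to an error controlled by the (slow-variable) modulus of continuity of $\varphi$ and the cell size $\ep$; summing over the $O(\ep^{-d})$ cells yields a Riemann sum converging to the right-hand side. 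For a \emph{general} $\varphi \in L^2(\Omega; C_{\rm per}(Y))$ one keeps, by density, the one-sided estimate
$$
\limsup_{\ep\to 0} \big\|\varphi(\cdot,\cdot/\ep)\big\|_{L^2(\Oe)} \le \|\varphi\|_{L^2(\Omega\times Y_F)} .
$$
Together with the displayed Cauchy--Schwarz inequality and the boundedness of $\{w^\ep\}$ in $L^2(\Oe)$, this gives the uniform bound $\limsup_{\ep\to 0}|\mu^\ep(\varphi)| \le C\,\|\varphi\|_{L^2(\Omega\times Y_F)}$.

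Next I would extract the subsequence. The space $L^2(\Omega; C_{\rm per}(Y))$ is separable, so fix a countable dense family $\{\varphi_m\}_{m\in\mathbb{N}}$. For each $m$ the scalar sequence $\{\mu^\ep(\varphi_m)\}_\ep$ is bounded, and a diagonal extraction provides a subsequence (not relabelled) along which $\mu^\ep(\varphi_m)$ converges for all $m$. Using $|\mu^\ep(\varphi) - \mu^\ep(\varphi_m)| \le \|w^\ep\|_{L^2(\Oe)}\|(\varphi-\varphi_m)(\cdot,\cdot/\ep)\|_{L^2(\Oe)}$ and the oscillation bound, convergence propagates to every $\varphi \in L^2(\Omega; C_{\rm per}(Y))$, defining a linear functional $\mu^0(\varphi) := \lim_\ep \mu^\ep(\varphi)$ with $|\mu^0(\varphi)| \le C\|\varphi\|_{L^2(\Omega\times Y_F)}$. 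Since restrictions to $Y_F$ of functions in $L^2(\Omega; C_{\rm per}(Y))$ are dense in $L^2(\Omega\times Y_F)$, $\mu^0$ extends uniquely to a bounded functional on $L^2(\Omega\times Y_F)$, and the Riesz representation theorem yields $w \in L^2(\Omega\times Y_F)$ with $\mu^0(\varphi) = \int_\Omega\int_{Y_F} w(x,y)\varphi(x,y)\,dy\,dx$ for all admissible $\varphi$, which is exactly the asserted two-scale convergence along the subsequence.

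The main obstacle is the oscillation lemma together with the measurability of $x\mapsto\varphi(x,x/\ep)$: one must choose the class of admissible test functions carefully (enough smoothness or joint continuity for the Riemann-sum argument to be rigorous) and then propagate the one-sided $L^2$-bound to all of $L^2(\Omega; C_{\rm per}(Y))$ by density. The perforation enters only through the replacement of $Y$ by $Y_F$ in the cell integrals, which is harmless thanks to the exact tiling $L_k/\ep\in\mathbb{Z}$; the remaining steps (separability, diagonal extraction, Riesz representation) are soft functional analysis. A detailed proof of this compactness theorem is given in \cite{All92}, and the perforated-domain variant is a routine adaptation.
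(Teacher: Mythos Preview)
Your proof sketch is correct and follows the standard Nguetseng--Allaire argument from \cite{NGU}, \cite{All92}. Note, however, that the paper does not actually prove this proposition: it is stated as a known compactness result, with the references to \cite{All92} and \cite{NGU} serving as the proof. Your write-up is precisely the classical argument contained in those references, adapted to the perforated setting via the exact tiling hypothesis $L_k/\ep\in\mathbb{Z}$, so there is nothing to compare against beyond confirming that you have reproduced the intended citation.
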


{   For sequences of functions ${w^\ep}$  defined in the perforated domain $\Omega^\ep$ and satisfying uniform in $\ep$  $H^1$-bounds, it is well-known \cite{hornung} that
one can build extensions to the entire domain $\Omega$ satisfying
the same uniform bounds. We implicitly assume such extensions in
the theorem below but do not give details which are classical and
may be found in \cite{AMP}.}

\begin{theorem}
\label{1.15}
Under the assumptions of Lemma \ref{lem.existvf} the solution
of (\ref{lin4b})-(\ref{LIN4}) converges in the following sense
\begin{gather*}
\mathbf{u}^\ep \to  \mathbf{u}^0 (x,y)  \quad \hbox{ in the two-scale sense } \label{1.69} \\
\ep \nabla \mathbf{u}^\ep  \to  \nabla_y \mathbf{u}^0
 (x,y)  \quad \hbox{ in the two-scale sense }
\label{1.70}\\
P^\ep \to  p^0 (x) \quad \hbox{ strongly in } \, L^2(\O) \label{1.71} \\
\Phi_j^\ep  \to  \Phi_j^0(x) \quad \hbox{ weakly in } \, H^1 (\Omega)
\mbox{ and strongly in }  L^2 (\Omega)
\label{convPhi}\\
\nabla \Phi_j^\ep  \to \nabla_x \Phi_j^0(x) + \nabla_y \Phi_j^1(x,y) \quad
\hbox{ in the two-scale sense }
\label{convgradPhi}
\end{gather*}
where $(\mathbf{u}^0 , p^0) \in L^2 (\O ; H^1_{per}(Y)^d) \times L^2_0 (\O)$
and $\{ \Phi_j^0 , \Phi_j^1 \}_{j=1, \dots , N} \in \left(H^1 (\O)\times L^2(\O; H^1_{per}(Y))\right)^N$ is the unique solution of the two-scale homogenized problem
\begin{gather}
- \Delta_y \mathbf{u}^0(x,y) + \nabla_y p^1(x,y) = - \nabla_x p^0(x) - \mathbf{f}^*(x) \notag \\
+ \sum_{j=1}^N z_j n_j^{0}(y) \left(\nabla_x \Phi_j^0(x) + \nabla_y \Phi_j^1(x,y) + \mathbf{E}^*(x)\right)
\ \mbox{ in } \, \O\times Y_F , \label{Stokes1} \\
\div_y \mathbf{u}^0 (x,y) =0 \ \mbox{ in } \, \O \times Y_F , \quad\quad \mathbf{u}^0 (x,y) =0 \ \mbox{on } \, \O \times S , \label{Stokes2}\end{gather} \begin{gather}
\div_x \left(\int_{Y_F}\mathbf{u}^0(x,y)  \, dy \right) =0 \, \mbox{ in } \O , \label{VAREP51}\\
- \div_y n^0_i(y) \bigg( \sum_{j=1}^N  K_{ij}(y) z_j \big( \nabla_y \Phi^1_j(x,y)  + \nabla_x \Phi_j ^0 (x) +  \mathbf{E}^*(x)\big)  + \pei \mathbf{u}^0(x,y) \bigg) =0  \ \mbox{ in } \O\times Y_F ,\label{Diff1}\\
\sum_{j=1}^N  K_{ij}(y) z_j \big( \nabla_y \Phi^1_j  + \nabla_x \Phi_j ^0  + \mathbf{E}^* \big) \cdot \nu(y) =0 \ \mbox{ on } \O\times S, \label{Diff2} \\
-\div_x  \int_{Y_F} n^0_i(y) \bigg( \sum_{j=1}^N  K_{ij}(y) z_j \big( \nabla_y \Phi^1_j(x,y)  + \nabla_x \Phi_j ^0 (x) +  \mathbf{E}^*(x)\big)  + \pei \mathbf{u}^0(x,y)  \bigg) \, dy  =0  \; \mbox{in } \, \O  ,\label{Diffg1}\\
\Phi_i^0 \ , \; \int_{Y_F}\mathbf{u}^0 \, dy \ \mbox{ and } \ p^0 \mbox{ being $\Omega$-periodic in } x , \label{Diffg2}
\end{gather}
with periodic boundary conditions on the unit cell $Y_F$ for all functions depending on $y$
and $S=\partial Y_S \setminus \p Y$.
\end{theorem}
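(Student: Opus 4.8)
\emph{Plan.} The plan is to follow the two-scale convergence proof of the ideal case in \cite{AMP}, checking that the extra coupling through the tensor $K_{ij}^{0,\ep}$ does not spoil the argument. First I would record the compactness input: by Lemma \ref{lem.existvf} the sequences $\mathbf{u}^\ep$ and $\ep\nabla\mathbf{u}^\ep$ are bounded in $L^2(\Oe)$ and each $\Phi_j^\ep$ is bounded in $H^1(\Oe)$; after the classical $H^1$-extension to $\O$ (see the remark preceding the theorem and \cite{hornung}), Proposition \ref{thm.2sc} together with the standard two-scale compactness lemmas of \cite{All92}, \cite{NGU} yields, along a subsequence, the stated limits. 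That $\mathbf{u}^0\in L^2(\O;H^1_{per}(Y)^d)$ with $\div_y\mathbf{u}^0=0$ and $\mathbf{u}^0=0$ on $\O\times S$ comes from the bound on $\ep\nabla\mathbf{u}^\ep$ and the incompressibility/no-slip conditions; that $\Phi_j^0$ depends only on $x$ and that the two-scale limit of $\nabla\Phi_j^\ep$ equals $\nabla_x\Phi_j^0+\nabla_y\Phi_j^1$ are the usual consequences of the $H^1$-bound; strong $L^2$-convergence of $\Phi_j^\ep$ follows from Rellich. For the pressure I would introduce the $\ep$-uniform pressure extension on $\O$ (Tartar's restriction operator, as used in \cite{AMP}); the extension is bounded in $L^2_0(\O)$ and, since the microscopic pressure oscillations are of order $\ep$, it converges strongly to a limit $p^0(x)$ of the slow variable alone.

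Next I would pass to the limit in the variational formulation (\ref{VAREP}). The coefficients $n_i^{0,\ep}(x)=n_i^{0}(x/\ep)$ and $K_{ij}^{0,\ep}(x)=K_{ij}(x/\ep)$ are fixed $\ep$-periodic functions, uniformly bounded in $L^\infty$ by Theorem \ref{thm.main} (applied to $\Psi^{0,\ep}$, hence to $\xi$, $\Gamma^{0,\ep}$, $n_j^{0,\ep}$) and by the smallness of $n_c$ (Lemma \ref{lem.existvf}); they are therefore admissible test coefficients for two-scale convergence. I would insert oscillating test functions $\mathbf{v}(x)=\mathbf{v}_0(x,x/\ep)$ with $\mathbf{v}_0\in\mathcal{D}(\O;C^\infty_{per}(Y)^d)$, $\div_y\mathbf{v}_0=0$, $\mathbf{v}_0=0$ on $\O\times S$, as well as $\mathbf{v}(x)=\mathbf{v}_0(x)$ to recover the macroscopic divergence constraint, and $\phi_j(x)=\varphi_j(x)+\ep\,\theta_j(x,x/\ep)$ for the ionic equations. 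Every term of $a$ and $\mathcal{L}$ then passes to the two-scale limit; in particular the antisymmetric coupling $\sum_j z_j\int_{\Oe}n_j^{0,\ep}(\mathbf{u}^\ep\cdot\nabla\phi_j-\mathbf{v}\cdot\nabla\Phi_j^\ep)$ is linear in each of the two-scale convergent factors, so it poses no difficulty. Separating test functions that depend only on $x$ from those that depend only on the fast variable splits the identified limit into the cell problems (\ref{Stokes1})--(\ref{Stokes2}), (\ref{Diff1})--(\ref{Diff2}) and the macroscopic conservation laws (\ref{VAREP51}), (\ref{Diffg1}), i.e.\ exactly the two-scale homogenized system (\ref{Stokes1})--(\ref{Diffg2}).

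Finally I would prove uniqueness of the two-scale homogenized problem, which upgrades the subsequence convergence to convergence of the whole sequence. The system (\ref{Stokes1})--(\ref{Diffg2}) is the Euler--Lagrange system of a bilinear form on the Hilbert space $\big(L^2(\O;H^1_{per}(Y)^d)\cap\{\div_y\cdot=0,\ \cdot=0\text{ on }S\}\big)\times\big(H^1(\O)\times L^2(\O;H^1_{per}(Y))\big)^N$, with the macroscopic constraint $\div_x\int_{Y_F}\mathbf{u}^0\,dy=0$ enforced by the Lagrange multiplier $p^0$. Testing with the solution itself cancels the antisymmetric coupling terms, so coercivity is inherited verbatim from the proof of Lemma \ref{lem.existvf} (positive-definiteness of $K_{ij}$ for small $n_c$, Poincar\'e/Korn on the connected cell $Y_F$, positivity and boundedness of $n_j^0$). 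Lax--Milgram (plus the Stokes inf-sup argument for $p^0$) then gives existence and uniqueness.

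\emph{Main obstacle.} As always in the homogenization of Stokes-type equations toward a Darcy law, the delicate point is the pressure: constructing the $\ep$-uniform pressure extension off $\Oe$ and showing the extended pressure converges \emph{strongly} in $L^2$ to a function $p^0(x)$ of the macroscopic variable only. The remaining steps are routine once the $L^\infty$ bounds on $\Psi^{0,\ep}$, $n_j^{0,\ep}$, $K_{ij}^{0,\ep}$ from Section \ref{secexist} and the coercivity of Lemma \ref{lem.existvf} are in hand.
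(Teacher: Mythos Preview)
Your proposal is correct and follows essentially the same approach as the paper: the convergence and derivation part is handled exactly as in \cite{AMP} via two-scale compactness, oscillating test functions, and the Tartar pressure extension, while the well-posedness of the two-scale homogenized problem is obtained by Lax--Milgram after observing that the antisymmetric coupling cancels and the local diffusion tensor $\tilde K(y)=\big(\frac{z_iz_j}{\pei}K_{ij}(y)n_i^0(y)\big)$ is symmetric positive definite for small $n_c$, $\xi_c$. The paper in fact omits the convergence argument entirely by reference to \cite{AMP} and only spells out the coercivity/Lax--Milgram step, so your write-up is, if anything, more explicit.
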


The limit problem introduced in Theorem \ref{1.15} is called the two-scale
and two-pressure homogenized problem, following the terminology of \cite{hornung},
\cite{LIO:81}. It features two incompressibility constraints
(\ref{Stokes2}) and (\ref{VAREP51}) which are exactly dual to the two pressures $p^0(x)$
and $p^1(x,y)$ which are their corresponding Lagrange multipliers.
Remark that equations (\ref{Stokes1}), (\ref{Stokes2}) and (\ref{Diff1})
are just the leading order terms in the ansatz of the original equations.
On the other hand, equations (\ref{VAREP51}) and (\ref{Diffg1}) are averages
on the unit cell $Y_F$ of the next order terms in the ansatz.
For example, (\ref{VAREP51}) is deduced from
$$
\div_y \mathbf{u}^1 (x,y) + \div_x \mathbf{u}^0 (x,y)=0 \ \mbox{ in } \, \O \times Y_F
$$
by averaging on $Y_F$, recalling that $\mathbf{u}^1(x,y) =0$ on $\O\times S$.

\begin{proof}
The proof of convergence and the derivation of the homogenized system
is completely similar to the proof of Theorem 1 in \cite{AMP} which
holds in the ideal case. The only point which deserves to be made
precise here is the well-posedness of the two-scale homogenized problem.

Following section 3.1.2 in \cite{All97}, we introduce the functional space for the velocities
$$
V= \{ \mathbf{u}^0(x,y) \in L^2_{per}\left(\O ; H^1_{per}(Y_F)^d\right) \; \mbox{ satisfying }
(\ref{Stokes2})-(\ref{VAREP51}) \} ,
$$
which is known to be orthogonal in $L^2_{per}\left(\O ; H^1_{per}(Y_F)^d\right)$ to
the space of gradients of the form $\nabla_x q(x) + \nabla_y q_1(x,y)$ with $q(x)\in
H^1_{per}(\O)/\RR$ and $q_1(x,y)\in L^2_{per}\left(\O ; L^2_{per}(Y_F)/\RR\right)$.
We define the functional space
$X=V \times H^1_{per} (\O)/ \RR \times L^2_{per}(\O; H^1_{per}(Y_F)^d /\RR)$
and the variational formulation of (\ref{Stokes1})-(\ref{Diffg2}) is to find
$( \mathbf{u}^0 , \{ \Phi_j^0 , \Phi_j^1 \} ) \in X$ such that,
for any test functions $(\mathbf{v} , \{ \phi^0_j , \phi^1_j \} ) \in X$,
\begin{equation}
\label{VAR0}
a\left( ( \mathbf{u}^0 , \{ \Phi_j^0 , \Phi_j^1 \} ) , ( \mathbf{v}, \{ \phi^0_j , \phi^1_j \} ) \right) =
\langle \mathcal{L}, ( \mathbf{v}, \{ \phi^0_j , \phi^1_j \} ) \rangle ,
\end{equation}
where the bilinear form $a$ and the linear form $\mathcal{L}$ are defined by
\begin{gather}
a\left( ( \mathbf{u}^0 , \{ \Phi_j^0 , \Phi_j^1 \} ) , ( \mathbf{v}, \{ \phi^0_j , \phi^1_j \} ) \right) := \int_{\O} \int_{Y_F} \nabla_y \mathbf{u}^0 : \nabla \mathbf{v} \, dx \, dy \notag \\
+ \sum_{i,j=1}^N \frac{z_iz_j}{\pei} \int_{\O} \int_{Y_F} n_i^{0} K_{ij} (\nabla_x \Phi_j^0
+ \nabla_y \Phi_j^1 ) \cdot (\nabla_x \phi_j^0 + \nabla_y \phi_j^1 ) \, dx \, dy
\label{VAR1} \\
 + \sum_{j=1}^N z_j \int_{\O} \int_{Y_F} n_j^{0} \big( \mathbf{u}^0
\cdot (\nabla_x \phi_j^0 + \nabla_y \phi_j^1 ) -
\mathbf{v} \cdot (\nabla_x \Phi_j^0 + \nabla_y \Phi_j^1 ) \big) \ dx  \, dy \notag
\end{gather}
and
\begin{gather}
< \mathcal{L}, ( \mathbf{v}, \{ \phi_j \} ) > :=
\sum_{j=1}^N z_j \int_{\O} \int_{Y_F} n_j^{0} \mathbf{E}^* \cdot \mathbf{v} \, dx  \, dy
- \int_{\O} \int_{Y_F} \mathbf{f}^*\cdot\mathbf{v} \, dx \, dy \notag \\
- \sum_{i,j=1}^N \frac{z_iz_j}{\pei} \int_{\O} \int_{Y_F} n_i^{0} K_{ij} \mathbf{E}^*
\cdot (\nabla_x \phi_j^0 + \nabla_y \phi_j^1 ) \, dx \, dy  ,\notag
\end{gather}
We apply the Lax-Milgram lemma to prove the existence and uniqueness of
the solution in $X$ of (\ref{VAR0}).
The only point which requires to be checked is the coercivity of the bilinear form.
We take $\mathbf{v} =\mathbf{u}^0$, $\phi_j^0 = \Phi_j^0$ and $\phi_j^1 = \Phi_j^1$ as the test functions in (\ref{VAR0}).

We define a local diffusion tensor
\begin{equation}
\label{local.tensor}
\tilde K(y) = \left( \frac{z_iz_j}{\pei} K_{ij}(y)n^0_i(y) \right)_{1\leq i,j\leq N}
= \left(  \frac{k_B T}{u_c L} z_iz_j L_{ij}(y) \right)_{1\leq i,j\leq N} ,
\end{equation}
which is symmetric since $(L_{ij})$ is symmetric too. As already remarked in the
proof of Lemma \ref{lem.existvf}, $\tilde K$ is uniformly coercive for small enough
$n_c>0$ and $\xi_c>0$.
Therefore, the second integral on the right hand side of (\ref{VAR1}) is positive.
The third integral, being skew-symmetric, vanishes, which proves the coercivity of $a$.
\end{proof}

Of course, one should extract from (\ref{Stokes1})-(\ref{Diffg2}) the macroscopic
homogenized problem, which requires to separate the fast and slow scale. In the ideal
case, Looker and Carnie in \cite{LC:06} proposed a first approach which was further
improved in \cite{AMP} and  \cite{ABDMP}.

The main idea is to recognize in the two-scale homogenized problem
(\ref{Stokes1})-(\ref{Diffg2}) that there are two different macroscopic
fluxes, namely $(\nabla_x p^0 (x)+\mathbf{f}^*(x))$ and
$\{\nabla_x \Phi_j^0(x) + \mathbf{E}^*(x)\}_{1\leq j\leq N}$.
Therefore we introduce two family of cell problems, indexed by $k\in\{1,...,d\}$
for each component of these fluxes. We denote by $\{\mathbf{e}^k\}_{1\leq k\leq d}$
the canonical basis of $\RR^d$.

The first cell problem, corresponding to the macroscopic pressure gradient, is
\begin{gather}
- \Delta_y \mathbf{v}^{0,k}(y) + \nabla_y \pi^{0,k}(y) = \mathbf{e}^k +
\sum_{j=1}^N z_j n_j^{0}(y) \nabla_y \theta^{0,k}_j(y) \; \mbox{ in }  Y_F \label{StokesAux0}\\
\div_y \mathbf{v}^{0,k}(y) =0 \quad \mbox{ in }  Y_F , \quad  \mathbf{v}^{0,k}(y) =0 \, \mbox{ on } S , \label{divAux0}\\
-\div_y n^0_i(y) \bigg( \sum_{j=1}^N  K_{ij}(y) z_j \nabla_y \theta^{0,k}_j(y) +
\pei \mathbf{v}^{0,k}(y) \bigg) = 0  \; \mbox{ in }  Y_F \label{DiffAux0}\\
\sum_{j=1}^N  K_{ij}(y) z_j \nabla_y \theta^{0,k}_j(y) \cdot \nu =0  \; \mbox{ on } S. \label{bcAux0}
\end{gather}
The second cell problem, corresponding to the macroscopic diffusive flux, is
for each species $l\in\{1,...,N\}$
\begin{gather}
- \Delta_y \mathbf{v}^{l,k}(y) + \nabla_y \pi^{l,k}(y) = \sum_{j=1}^N z_j n_j^{0}(y) (
\delta_{lj} \mathbf{e}^k  +  \nabla_y \theta^{l,k}_j(y) ) \; \mbox{ in }  Y_F \label{StokesAuxi}\\
\div_y \mathbf{v}^{l,k}(y) =0 \quad \mbox{ in }  Y_F , \quad \mathbf{v}^{l,k}(y) =0 \quad \mbox{ on } S , \label{divAuxi} \\
-\div_y n^0_i(y) \bigg( \sum_{j=1}^N  K_{ij}(y) z_j \big( \delta_{lj} \mathbf{e}^k +\nabla_y \theta^{l,k}_j(y) \big) +
\pei \mathbf{v}^{l,k}(y) \bigg) =0  \; \mbox{ in }  Y_F \label{DiffAuxi}\\
\sum_{j=1}^N  K_{ij}(y) z_j \big( \delta_{lj} \mathbf{e}^k +\nabla_y \theta^{l,k}_j(y) \big) \cdot \nu =0  \; \mbox{ on } S, \label{bcAuxi}
\end{gather}
where $\delta_{ij}$ is the Kronecker symbol. As usual the cell problems
are complemented with periodic boundary conditions.

Then, we can decompose the solution of (\ref{Stokes1})-(\ref{Diffg2}) as
\begin{gather}
\mathbf{u}^0 (x,y) = \sum_{k=1}^d \left( - \mathbf{v}^{0,k}(y)
\left( \frac{\p p^0}{\p x_k} +
f^*_k \right)(x) +
\sum_{i=1}^N \mathbf{v}^{i,k}(y) \left( E^*_k + \frac{\p \Phi^0_i}{\p x_k} \right)(x) \right) \label{micro1}\\
 p^1 (x,y) = \sum_{k=1}^d  \left( - \pi^{0,k}(y)
\left( \frac{\p p^0}{\p x_k} +
f^*_k \right)(x) +
\sum_{i=1}^N \pi^{i,k}(y) \left( E^*_k + \frac{\p \Phi^0_i}{\p x_k} \right)(x) \right) \label{micro2}\\
 \Phi_j^1 (x,y) = \sum_{k=1}^d  \left( - \theta^{0,k}_j(y)
\left( \frac{\p p^0}{\p x_k} +
f^*_k \right)(x) +
\sum_{i=1}^N \theta^{i,k}_j(y) \left( E^*_k + \frac{\p \Phi^0_i}{\p x_k} \right)(x) \right) .\label{micro3}
\end{gather}
We average (\ref{micro1})-(\ref{micro3}) in order to get a purely
macroscopic homogenized problem.
We define the homogenized quantities: first, the electrochemical potential
\begin{equation}
\label{qhom1}
\mu_j(x) = - z_j (\Phi_j^0 (x) + \Psi^{ext,*}(x) ) ,
\end{equation}
then, the ionic flux of the $j$th species
\begin{equation}
\label{qhom2}
\mathbf{j}_j(x) = \frac{1}{|Y_F |} \int_{Y_F}
n^0_j(y) \bigg( \sum_{l=1}^N  K_{jl}(y) \frac{z_l}{\pej} \big( \nabla_y \Phi^1_l(x,y)  + \nabla_x \Phi_l ^0 (x) +  \mathbf{E}^*(x)\big)  + \mathbf{u}^0 \bigg) dy ,
\end{equation}
and finally the filtration velocity
\begin{equation}
\label{qhom3}
\mathbf{u}(x) =  \frac{1}{|Y_F|} \int_{Y_F} \mathbf{u}^{0} (x,y) \, dy .
\end{equation}
From (\ref{micro1})-(\ref{micro3}) we deduce the homogenized or upscaled equations
for the above effective fields.

\begin{proposition}
\label{prop.eff}
Introducing the flux
$\mathcal{J}(x) = (\mathbf{u},\{\mathbf{j}_j\}_{1\leq j\leq N})$
and the gradient
$\mathcal{F}(x) =(\nabla_x p^0, \{\nabla_x \mu_j\}_{1\leq j\leq N})$,
the macroscopic equations are
\begin{gather}
\div_x \mathcal{J} =0 \quad \mbox{in } \; \O , \label{hom1}\\
\mathcal{J} = - \mathcal{M} \mathcal{F} - \mathcal{M} (\mathbf{f}^* , \{ 0\}) , \label{hom2}
\end{gather}
with a homogenized tensor $\mathcal{M}$ defined by
\begin{equation}
\label{Onsager}
   \mathcal{M}  = \left(
                    \begin{array}{cccc}
\dsp  \mathbb{K}    & \dsp \frac{\mathbb{J}_1}{z_1}  & \dots  &  \dsp  \frac{\mathbb{J}_N}{z_N}  \\
\dsp  \mathbb{L}_1 & \dsp \frac{\mathbb{D}_{11}}{z_1} & \cdots &  \dsp       \frac{\mathbb{D}_{1N}}{z_N}\\
\dsp  \vdots  & \vdots & \ddots & \vdots \\
\dsp  \mathbb{L}_N & \dsp  \frac{\mathbb{D}_{N1}}{z_1} & \cdots & \dsp \frac{\mathbb{D}_{NN}}{z_N} \\
                    \end{array}
                  \right) ,
\end{equation}
and complemented with periodic boundary conditions for $p^0$
and $\{\Phi_j^0\}_{1\leq j\leq N}$.
The matrices $\mathbb{J}_i$, $\mathbb{K}$, $\mathbb{D}_{ji}$ and $\mathbb{L}_j$
are defined by their entries
\begin{gather}
\{ \mathbb{J}_i \}_{lk} = \frac{1}{|Y_F |}\int_{Y_F } \mathbf{v}^{i,k}(y) \cdot \mathbf{e}^l \, dy ,\notag \\
\{ \mathbb{K} \}_{lk} = \frac{1}{|Y_F |} \int_{Y_F } \mathbf{v}^{0,k}(y) \cdot \mathbf{e}^l \, dy ,\notag\\
\{ \mathbb{D}_{ji} \}_{lk} = \frac{1}{|Y_F |} \int_{Y_F } n_j^0 (y) \Big( \mathbf{v}^{i,k}(y)
+ \sum_{m=1}^N  K_{jm}(y) \frac{z_m}{\pej}
\left( \delta_{im} \mathbf{e}^k + \nabla_y \theta^{i,k}_m (y) \right) \Big) \cdot \mathbf{e}^l \ dy ,\notag \\
\{ \mathbb{L}_j \}_{lk} = \frac{1}{|Y_F |} \int_{Y_F } n_j^0 (y) \Big( \mathbf{v}^{0,k}(y)
+ \sum_{m=1}^N  K_{jm}(y) \frac{z_m}{\pej}
\nabla_y \theta^{0,k}_m (y) \Big) \cdot \mathbf{e}^l \ dy .\notag
\end{gather}
Furthermore, $\mathcal{M}$ is symmetric positive definite,
which implies that the homogenized equations (\ref{hom1})-(\ref{hom2}) have a
unique solution.
\end{proposition}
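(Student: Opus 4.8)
The plan is to first read off the macroscopic system and the formula for $\mathcal M$ by averaging the cell decomposition, and then to deduce both the symmetry and the positive definiteness of $\mathcal M$ from a single quadratic--form identity, following the strategy used in the ideal case \cite{AMP}.

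First I would substitute the representations (\ref{micro1})--(\ref{micro3}) of $\mathbf u^0,p^1,\Phi^1_j$ into the definitions (\ref{qhom2})--(\ref{qhom3}) of the effective ionic fluxes and the filtration velocity, collect the $y$--integrals into the matrices $\mathbb K,\mathbb J_i,\mathbb L_j,\mathbb D_{ji}$ of the statement, and use $\nabla_x\Phi^0_i+\mathbf E^*=-\frac1{z_i}\nabla_x\mu_i$ from (\ref{qhom1}); this gives the constitutive law (\ref{hom2}) with $\mathcal M$ of the block form (\ref{Onsager}), while $\div_x\mathbf u=0$ is exactly (\ref{VAREP51}) and $\div_x\mathbf j_j=0$ is (\ref{Diffg1}), i.e. (\ref{hom1}). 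This part is a routine computation that I would not reproduce in detail.

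The substance is the last assertion. Fix constant vectors $G_0,\dots,G_N\in\RR^d$, set $\tilde G=(G_0,-z_1G_1,\dots,-z_NG_N)$ (a linear bijection of $(\RR^d)^{N+1}$), and form by linearity the combined cell fields $\mathbf w=-\sum_k\mathbf v^{0,k}(G_0)_k+\sum_{i,k}\mathbf v^{i,k}(G_i)_k$ and $\Theta_j=-\sum_k\theta^{0,k}_j(G_0)_k+\sum_{i,k}\theta^{i,k}_j(G_i)_k$, together with the analogous pressure $\pi_{\mathbf w}$; superposing (\ref{StokesAux0})--(\ref{bcAux0}) and (\ref{StokesAuxi})--(\ref{bcAuxi}) shows these solve the cell problem with data $(G_0,\{G_j\})$, namely Stokes with source $-G_0+\sum_j z_jn^0_j(G_j+\nabla_y\Theta_j)$ and $-\div_y n^0_i(\sum_j K_{ij}z_j(G_j+\nabla_y\Theta_j)+\pei\mathbf w)=0$, with $\mathbf w=0$ and the no--flux condition on $S$. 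By construction $-\mathcal M\tilde G=\big(\frac1{|Y_F|}\int_{Y_F}\mathbf w\,dy,\{\mathbf j_j\}\big)$ with $\mathbf j_j$ given by (\ref{qhom2}) for these constant gradients, so, recalling $\mathcal J=-\mathcal M\tilde G$, $\mathcal M\tilde G\cdot\tilde G=-\frac1{|Y_F|}\int_{Y_F}\mathbf w\cdot G_0\,dy+\sum_j\frac{z_j}{|Y_F|}\int_{Y_F}n^0_j\big(\sum_l K_{jl}\frac{z_l}{\pej}(G_l+\nabla_y\Theta_l)+\mathbf w\big)\cdot G_j\,dy$. I would then test the Stokes equation with $\mathbf w$ itself (the pressure drops out since $\div_y\mathbf w=0$, $\mathbf w=0$ on $S$) and the $i$--th diffusion equation with $\Theta_i$ weighted by $z_i/\pei$, summed over $i$; substituting these two identities to eliminate the mixed terms $\int n^0_j\mathbf w\cdot G_j$ and $\int n^0_j\mathbf w\cdot\nabla_y\Theta_j$, and invoking the symmetry in $(i,j)$ of the local tensor $\tilde K_{ij}=\frac{z_iz_j}{\pei}K_{ij}n^0_i$ of (\ref{local.tensor}) to merge the resulting double sums, I expect to land on
$$
\mathcal M\tilde G\cdot\tilde G=\frac1{|Y_F|}\left(\int_{Y_F}|\nabla_y\mathbf w|^2\,dy+\sum_{i,j=1}^N\int_{Y_F}\tilde K_{ij}(y)\big(G_i+\nabla_y\Theta_i\big)\cdot\big(G_j+\nabla_y\Theta_j\big)\,dy\right).
$$
This bookkeeping — keeping track of the skew--symmetric convective cross terms and correctly using the symmetry of $\tilde K$ — is the step I expect to be the main obstacle; everything else is soft.

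From this identity the conclusion is immediate. Its right--hand side is a symmetric quadratic form in $\tilde G$ (both $\mathbf w$ and the $\Theta_j$ depend linearly on $\tilde G$, and $\int|\nabla_y\mathbf w|^2$ and the $\tilde K$--form are symmetric since $\tilde K=\tilde K^{T}$), so polarization yields $\mathcal M=\mathcal M^{T}$. For positivity, under the standing hypotheses (small $n_c$, small $\xi_c$, and (\ref{Bound0})) the interaction coefficients entering $K^{0,\ep}_{ij}$ are small in $L^\infty$, so $\tilde K(y)$ is uniformly positive definite, as already noted in the proof of Lemma \ref{lem.existvf}; hence $\mathcal M\tilde G\cdot\tilde G\ge 0$, and if it vanishes then $\nabla_y\mathbf w\equiv 0$ and $G_j+\nabla_y\Theta_j\equiv 0$ in $Y_F$. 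Thus $\mathbf w\equiv 0$ ($\mathbf w=0$ on $S$ and $Y_F$ is connected) and $\Theta_j=-G_j\cdot y+\mathrm{const}$ on the connected set $Y_F$, which is $Y$--periodic only if $G_j=0$; plugging $\mathbf w\equiv 0$ and $G_j=0$ into the Stokes equation then forces $\nabla_y\pi_{\mathbf w}=-G_0$, i.e. $\pi_{\mathbf w}=-G_0\cdot y+\mathrm{const}$, periodic only if $G_0=0$. Therefore $\tilde G=0$, so $\mathcal M$ is symmetric positive definite, and the elliptic system (\ref{hom1})--(\ref{hom2}) with periodic boundary conditions is uniquely solvable by the Lax--Milgram theorem.
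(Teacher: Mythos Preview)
Your derivation of the macroscopic system and of the quadratic--form identity
\[
\mathcal M\tilde G\cdot\tilde G=\frac1{|Y_F|}\Bigl(\int_{Y_F}|\nabla_y\mathbf w|^2\,dy+\sum_{i,j}\int_{Y_F}\tilde K_{ij}(G_i+\nabla_y\Theta_i)\cdot(G_j+\nabla_y\Theta_j)\,dy\Bigr)
\]
is correct and matches the paper's computation for positive definiteness; your argument that the right--hand side vanishes only for $\tilde G=0$ (via periodicity of $\Theta_j+G_j\cdot y$ and of $\pi_{\mathbf w}+G_0\cdot y$) is in fact more explicit than what the paper writes.

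The gap is in the symmetry step. From the diagonal identity above you can only recover the \emph{symmetric part} $\tfrac12(\mathcal M+\mathcal M^T)$ by polarization: for any matrix $A$, the quadratic form $x\mapsto Ax\cdot x$ depends only on $\tfrac12(A+A^T)$, so equality of quadratic forms says nothing about the antisymmetric part of $\mathcal M$. Your sentence ``so polarization yields $\mathcal M=\mathcal M^T$'' is therefore unjustified, and this is precisely the delicate point since the convective coupling term $\sum_j z_j\int n^0_j(\mathbf u\cdot\nabla\phi_j-\mathbf v\cdot\nabla\Phi_j)$ in the cell bilinear form is skew, not symmetric.

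The paper closes this gap with an off--diagonal computation: for two sets of vectors $\lambda,\tilde\lambda$ one tests the Stokes equation for $\mathbf v^{\lambda}$ against $\mathbf v^{\tilde\lambda}$, but the diffusion equation for $\theta^{\tilde\lambda}_j$ against $\theta^{\lambda}_j$ (note the deliberate swap). Adding these, the symmetry of $\tilde K$ makes the left--hand side symmetric in $(\lambda,\tilde\lambda)$, while the skew convective terms combine so that the right--hand side becomes the genuine bilinear pairing $\mathcal M(\lambda^0,\{z_i\lambda^i\})\cdot(\tilde\lambda^0,\{z_i\tilde\lambda^i\})$; equating the two orderings then gives $\mathcal M=\mathcal M^T$. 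You need this (or an equivalent bilinear identity) rather than the diagonal one.
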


\begin{remark}
The symmetry of $\mathcal{M}$ is equivalent to the famous Onsager's reciprocal relations.
In the ideal case, the symmetry of the tensor $\mathcal{M}$ was proved in \cite{LC:06}, \cite{AMP}.
\end{remark}

\begin{proof}
The conservation law (\ref{hom1}) is just a rewriting of (\ref{VAREP51})
and (\ref{Diffg1}). The constitutive equation (\ref{hom2}) is an immediate
consequence of the definitions (\ref{qhom2}) and (\ref{qhom3}) of the
homogenized fluxes, taking into account the decomposition (\ref{micro1})-(\ref{micro3}).

We now prove that $\mathcal{M}$ is positive definite.
For any collection of vectors $\lambda^0,\{\lambda^i\}_{1\leq i\leq N}\in\RR^d$
let us introduce the following linear combinations of the cell solutions
\begin{gather}
\mathbf{v}^{\lambda} = \sum_{k=1}^d \left( \lambda^0_k \mathbf{v}^{0,k} +
\sum_{i=1}^N \lambda^i_k \mathbf{v}^{i,k} \right) , \quad
\theta^{\lambda}_j = \sum_{k=1}^d \left(\lambda^0_k \theta^{0,k}_j +
\sum_{i=1}^N \lambda^i_k \theta_j^{i,k} \right) , \label{vlambda}
\end{gather}
which satisfy
\begin{gather}
- \Delta_y \mathbf{v}^{\lambda}(y) + \nabla_y \pi^{\lambda}(y) = \lambda^0 +
\sum_{j=1}^N z_j n_j^0(y) \left(
\lambda^j  + \nabla_y \theta^{\lambda}_j(y) \right) \; \mbox{ in }  Y_F \label{StokesAuxl}\\
\div_y \mathbf{v}^{\lambda}(y) =0 \quad \mbox{ in }  Y_F , \quad \mathbf{v}^{\lambda}(y) =0 \quad \mbox{ on } S , \label{divAuxl}\\
-\div_y \left( n^0_i (y) \left( \sum_{j=1}^N z_j K_{ij}
(\lambda^j +\nabla_y \theta^{\lambda}_j(y)) + \pei \mathbf{v}^{\lambda}(y) \right) \right) =0  \; \mbox{ in }  Y_F \label{DiffAuxl}\\
\sum_{j=1}^N z_j K_{ij}
(\lambda^j +\nabla_y \theta^{\lambda}_j(y)) \cdot \nu =0  \; \mbox{ on } S, \label{bcAuxl}
\end{gather}
Multiplying the Stokes equation (\ref{StokesAuxl}) by $\mathbf{v}^{\lambda}$,
the convection-diffusion equation (\ref{DiffAuxl}) by $\theta_j^{\lambda}$ and
summing up, we obtain
\begin{gather}
\int_{Y_F} \left( | \nabla_y \mathbf{v}^{\lambda} (y) |^2  +
\sum_{i,j=1}^N \frac{z_iz_j}{\pei} n_i^0(y) K_{ij}(y) (\nabla_y \theta^{\lambda}_j(y)
+ \lambda^j ) \cdot (\nabla_y \theta^{\lambda}_i(y) + \lambda^i ) \right) dy \notag\\
= \int_{Y_F} \lambda^0\cdot \mathbf{v}^{\lambda} \, dy + \sum_{i=1}^N
\int_{Y_F} z_i n_i^0 \lambda^i\cdot \mathbf{v}^{\lambda} \, dy
+ \sum_{i,j=1}^N \int_{Y_F} \frac{z_iz_j}{\pei} n_i^0 K_{ij}
(\nabla_y\theta^{\lambda}_j + \lambda^j) \cdot \lambda^i \, dy \notag\\
= \mathbb{K} \lambda^0\cdot\lambda^0 + \sum_{i=1}^N \mathbb{J}_i \lambda^i\cdot\lambda^0
+ \sum_{i,j=1}^N z_i \lambda^i\cdot \mathbb{D}_{ij} \lambda^j
+ \sum_{i=1}^N z_i \lambda^i\cdot \mathbb{L}_{i} \lambda^0
= \mathcal{M} (\lambda^0,\{z_i\lambda^i\})^T \cdot (\lambda^0,\{z_i\lambda^i\})^T .\notag
\end{gather}
The left hand side of the above equality is positive.
This proves the positive definite character of $\mathcal{M}$.

Following a computation of \cite{AMP} in the ideal case, we prove the
symmetry of $\mathcal{M}$. For another set of vectors
$\tlambda^0,\{\tlambda^i\}_{1\leq i\leq N}\in\RR^d$, we define
$\mathbf{v}^{\tlambda}$ and $\theta^{\tlambda}_j$ by (\ref{vlambda}).
Multiplying the Stokes equation for $\mathbf{v}^{\lambda}$ by $\mathbf{v}^{\tlambda}$
and the convection-diffusion equation for $\theta_j^{\tlambda}$ by $\theta_j^{\lambda}$
(note the skew-symmetry of this computation), then adding the two variational
formulations yields
\begin{gather}
\int_{Y_F} \nabla_y \mathbf{v}^{\lambda} \cdot \nabla_y \mathbf{v}^{\tlambda} \, dy +
\sum_{i,j=1}^N \int_{Y_F} \frac{z_iz_j}{\pei} n_i^0 K_{ij} \nabla_y \theta^{\tlambda}_j
\cdot \nabla_y \theta^{\lambda}_j \, dy = \notag \\
\int_{Y_F} \lambda^0\cdot \mathbf{v}^{\tlambda} \, dy +
\sum_{j=1}^N \int_{Y_F} z_j n_j^0 \lambda^j\cdot \mathbf{v}^{\tlambda} \, dy -
\sum_{i,j=1}^N \int_{Y_F} \frac{z_iz_j}{\pei} n_i^0 K_{ij} \tlambda^j\cdot
\nabla_y\theta^{\lambda}_i \, dy .\label{vfsym}
\end{gather}
The diffusion tensor appearing in the left hand side of (\ref{vfsym})
is precisely equal to $\tilde K$, defined by (\ref{local.tensor}),
which is symmetric.
Therefore, the left hand side of (\ref{vfsym}) is symmetric in $\lambda,\tlambda$.
Exchanging the last term in (\ref{vfsym}), we deduce by symmetry
\begin{gather}
\int_{Y_F} \lambda^0\cdot \mathbf{v}^{\tlambda} \, dy +
\sum_{j=1}^N \int_{Y_F} z_j n_j^0 \lambda^j\cdot \mathbf{v}^{\tlambda} \, dy +
\sum_{i,j=1}^N \int_{Y_F} \frac{z_iz_j}{\pei} n_i^0 K_{ij} \lambda^j\cdot
\nabla_y\theta^{\tlambda}_i \, dy \notag \\
= \int_{Y_F} \tlambda^0\cdot \mathbf{v}^{\lambda} \, dy +
\sum_{j=1}^N \int_{Y_F} z_j n_j^0 \tlambda^j\cdot \mathbf{v}^{\lambda} \, dy +
\sum_{i,j=1}^N \int_{Y_F} \frac{z_iz_j}{\pei} n_i^0 K_{ij} \tlambda^j\cdot
\nabla_y\theta^{\lambda}_i \, dy ,\notag
\end{gather}
which is equivalent to the desired symmetry
$$
\mathcal{M} (\tlambda^0,\{z_i\tlambda^i\})^T \cdot (\lambda^0,\{z_i\lambda^i\})^T= \mathcal{M} (\lambda^0,\{z_i\lambda^i\})^T \cdot (\tlambda^0,\{z_i\tlambda^i\})^T .
$$
\end{proof}

%%%%%%%%%%%%%%%%%%%%%%%%%%%%%%%%%%%%%%%%%%%%%%%%%%%%%%%%

\section{Existence of solutions to the MSA variant of Poisson-Boltzmann equation}
\label{secexist}

{  The goal of this section is to prove Theorem \ref{thm.exist3},
i.e., the existence of solutions to system (\ref{BP0}), the MSA variant of
Poisson-Boltzmann equation.}
These solutions are the so-called equilibrium solutions computed in
Section \ref{sec3}. In a slightly different setting (two species
and a linear approximation of $p(\xi)$) and with a different
method (based on a saddle point approach in the two variables,
potential and concentrations), a previous existence result was
obtained in \cite{EJL}.

To simplify the notations we shall drop all $\ep$- or $0$-indices.
{  In the same spirit, the pore domain is denoted $\Omega_p$, a subset
of the full domain $\Omega$. To simplify we
denote by $\p\Omega_p$ the solid boundary of $\Omega_p$, which
should rather be $\partial\Omega_p \setminus \partial\Omega$ since
we impose periodic boundary conditions on $\partial\Omega$.
With our simplified notations, Theorem \ref{thm.exist3} is restated
below as Theorem \ref{thm.main} and the Poisson-Boltzmann equation reads}
\begin{equation}
\label{BP000}
\left\{ \begin{array}{ll}
\dsp - \Delta \Psi =\beta  \sum_{j=1}^N z_j n_j(x) \; \mbox{ in } \ \Omega_p , &  \\
\dsp  \nabla \Psi \cdot \nu = -N_\sigma \Sigma^* \ \mbox{ on } \, \partial\Omega_p ,\
 \Psi \; \mbox{ is } \Omega-\mbox{periodic},&
\end{array} \right.
\end{equation}
where, in view of (\ref{NJ00}), the equilibrium concentrations are
\begin{equation}
\label{NJ0HS}
n_j = \frac{n_j^0(\infty) \gamma_j^0(\infty)}{\gamma^{HS}} \exp \{ - z_j \Psi + \frac{L_B \Gamma \Gamma_c z_j^2}{1+ \Gamma \Gamma_c \sigma_j}  \} .
\end{equation}
We recall that the MSA screening parameter $\Gamma$ is defined by
\begin{equation}
\label{Gammaeps}
(\Gamma)^2 = \sum_{j=1}^N \frac{n_j z_j^2}{(1+ \Gamma_c \Gamma \sigma_j)^2}.
\end{equation}
and the hard sphere part of the activation
coefficient is given by
\begin{equation}
\label{gammaeps}
\gamma^{HS} = \exp \{ p(\xi) \} \; \mbox{ with } \quad
p(\xi) = \xi \frac{8-9\xi + 3\xi^2}{(1-\xi)^3}  \; \mbox{ and } \quad
\xi =  \xi_c \sum_{j=1}^N n_j (\frac{\sigma_j}{\sigma_c})^3 ,
\end{equation}
where $\xi \in [0,1)$ is the solute packing fraction and $\xi_c$ its
characteristic value defined by (\ref{xic}).

{
Let us now explain our strategy to solve the boundary value problem
(\ref{BP000}) coupled with the algebraic equations (\ref{NJ0HS}),
(\ref{Gammaeps}) and (\ref{gammaeps}). In a first step (Lemmas
\ref{lem.xi} and \ref{lem.gamma2}) we eliminate the algebraic
equations and write a nonlinear boundary value problem (\ref{BP12})
for the single unknown $\Psi$. In a second step
we introduce a truncated or "cut-off" problem (\ref{BP12cutoff})
which is easily solved by a standard energy minimization since
the nonlinearity has been truncated.
The third and most delicate step is to prove a maximum principle
for these truncated solutions (Proposition \ref{Linft}) which,
in turns, imply our desired existence result.}

{   In a first step we eliminate $\xi$ as a function
of $(\Psi,\Gamma)$ and then $\Gamma$ as a function of $\Psi$.}
From (\ref{sol.xi0}), for given potential $\Psi$ and screening parameter $\Gamma$,
the solute packing fraction $\xi$ is a solution of the algebraic equation
\begin{equation}
\label{sol.xi}
\xi =  \exp \{ -p(\xi) \} \xi_c \sum_{j=1}^N (\frac{\sigma_j}{\sigma_c})^3
n_j^0(\infty) \gamma_j^0(\infty) \exp \left\{ - z_j \Psi +
\frac{L_B \Gamma \Gamma_c z_j^2}{1+ \Gamma \Gamma_c \sigma_j} \right\} .
\end{equation}

\begin{lemma}
\label{lem.xi}
For given values of $\Psi$ and $\Gamma$, there exists
a unique solution $\xi\equiv\xi(\Psi,\Gamma)\in[0,1)$ of (\ref{sol.xi}).
Furthermore, this solution depends smoothly on $\Psi,\Gamma$ and
is increasing with $\Gamma$.
\end{lemma}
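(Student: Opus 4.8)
The plan is to reduce the algebraic equation (\ref{sol.xi}) to a scalar fixed-point problem in a single unknown. Writing
\[
g(\xi) = \xi\, e^{p(\xi)}, \qquad
C(\Psi,\Gamma) = \xi_c \sum_{j=1}^N \Big(\frac{\sigma_j}{\sigma_c}\Big)^3 n_j^0(\infty)\gamma_j^0(\infty)
\exp\Big\{ -z_j\Psi + \frac{L_B\Gamma\Gamma_c z_j^2}{1+\Gamma\Gamma_c\sigma_j}\Big\},
\]
equation (\ref{sol.xi}) reads exactly $g(\xi) = C(\Psi,\Gamma)$. Since $n_j^0(\infty)>0$, $\gamma_j^0(\infty)>0$ and $1+\Gamma\Gamma_c\sigma_j>0$ for $\Gamma\geq 0$, we have $C(\Psi,\Gamma)>0$, so the lemma reduces to proving that $g$ is a smooth strictly increasing bijection from $[0,1)$ onto $[0,+\infty)$.

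First I would record the elementary but convenient identity obtained by directly differentiating $p(\xi)=\xi(8-9\xi+3\xi^2)/(1-\xi)^3$, namely
\[
p'(\xi) = \frac{8-2\xi}{(1-\xi)^4},
\]
which is strictly positive on $[0,1)$. Consequently $g'(\xi) = e^{p(\xi)}\big(1+\xi p'(\xi)\big) \geq e^{p(\xi)} > 0$ on $[0,1)$, so $g$ is smooth and strictly increasing there. Moreover $g(0)=0$, and since the numerator $\xi(8-9\xi+3\xi^2)$ tends to $2>0$ while $(1-\xi)^3\to 0^+$ as $\xi\to 1^-$, we get $p(\xi)\to+\infty$, hence $g(\xi)\to+\infty$. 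Thus $g:[0,1)\to[0,+\infty)$ is a continuous strictly increasing bijection, and for each pair $(\Psi,\Gamma)$ there is a unique $\xi(\Psi,\Gamma)=g^{-1}\big(C(\Psi,\Gamma)\big)\in(0,1)$ solving (\ref{sol.xi}).

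For the regularity and monotonicity statements I would apply the implicit function theorem to $F(\xi,\Psi,\Gamma):=g(\xi)-C(\Psi,\Gamma)$: it is $C^\infty$ in all three variables (in the relevant range $\Gamma\geq 0$) and $\partial_\xi F = g'(\xi)>0$, so $\xi(\Psi,\Gamma)$ is smooth, with $\partial_\Gamma\xi = \partial_\Gamma C / g'(\xi)$. To conclude monotonicity in $\Gamma$ it then suffices to check $\partial_\Gamma C>0$; differentiating the $\Gamma$-dependent exponent of the $j$-th summand gives $\frac{d}{d\Gamma}\frac{L_B\Gamma\Gamma_c z_j^2}{1+\Gamma\Gamma_c\sigma_j} = \frac{L_B\Gamma_c z_j^2}{(1+\Gamma\Gamma_c\sigma_j)^2}>0$, so every term of $C$ is increasing in $\Gamma$, whence $\xi(\Psi,\Gamma)$ is increasing in $\Gamma$.

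There is essentially no serious obstacle here: the argument is a one-dimensional monotonicity statement plus the implicit function theorem. The only point requiring care is the computation of $p'$ (the cancellations in the quotient rule that produce the clean numerator $8-2\xi$) and the verification that $1+\xi p'(\xi)>0$ on $[0,1)$, which is immediate once that closed form is in hand; everything else is monotonicity of explicit positive functions.
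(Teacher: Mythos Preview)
Your proof is correct and follows essentially the same approach as the paper: both rely on the computation $p'(\xi)=(8-2\xi)/(1-\xi)^4>0$ and the monotonicity of $\Gamma\mapsto L_B\Gamma\Gamma_c z_j^2/(1+\Gamma\Gamma_c\sigma_j)$. The only cosmetic difference is that the paper keeps the equation in the form $\xi = e^{-p(\xi)}C(\Psi,\Gamma)$ and argues that the left side increases from $0$ to $1$ while the right side decreases from $C$ to $0$, whereas you multiply through by $e^{p(\xi)}$ and invert $g(\xi)=\xi e^{p(\xi)}$; your version is slightly more explicit about the implicit function theorem for the smoothness claim.
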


\begin{proof}
{  One can check that $p(\xi)$ is an increasing function of $\xi$ on
$[0,1)$ with range $\RR^+$ since}
$$
p^\prime(\xi) = \frac{8-2\xi}{(1-\xi)^4} .
$$
The existence and uniqueness follows from the strict decrease of
the function $\exp \{ -p(\xi) \}$ from 1 to 0, {  while the left hand ‪side
$\xi$ of (\ref{sol.xi})} increases from 0 to 1. Since the function
$$
\Gamma \to
\frac{L_B \Gamma \Gamma_c z_j^2}{1+ \Gamma \Gamma_c \sigma_j}
$$
is increasing, so is the solution $\xi$ of (\ref{sol.xi}) as a function of $\Gamma$.
\end{proof}

Once we know $\xi\equiv\xi(\Psi,\Gamma)$, the MSA screening parameter
$\Gamma$ satisfies the following algebraic equation (see (\ref{GammaL0}))
\begin{gather}
(\Gamma)^2 =  \sum^N_{j=1} n_j^0(\infty) \gamma_j^0(\infty)
\frac{z_j^2}{(1+\Gamma \Gamma_c \sigma_j)^2}
\exp \left\{ - z_j \Psi +
\frac{L_B \Gamma \Gamma_c z_j^2}{1+ \Gamma \Gamma_c \sigma_j}
- p\left(\xi(\Psi,\Gamma)\right) \right\} .
\label{GammaL2}
\end{gather}
We now prove that the algebraic equation (\ref{GammaL2}) admits
a unique solution $\Gamma(\Psi)$ under a mild assumption.

\begin{lemma}
\label{lem.gamma2}
For any value of $\Psi$, there always exists at least one solution
$\Gamma\equiv\Gamma(\Psi)$ of the algebraic equation (\ref{GammaL2}).
Furthermore, under the following assumption on the physical parameters
\begin{equation}
\label{Bound1}
L_B < (6+4\sqrt{2})  \min_{1\leq j\leq N} \frac{\sigma_j}{z_j^2}
\quad \mbox{ with } \quad 6+4\sqrt{2} \approx 11.656854 ,
\end{equation}
the solution $\Gamma(\Psi)$ is unique and is a differentiable function of $\Psi$.
\end{lemma}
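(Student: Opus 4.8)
The plan is to recast the algebraic equation (\ref{GammaL2}) as a fixed-point problem $\Gamma^2 = G(\Gamma,\Psi)$, where, using Lemma \ref{lem.xi} to substitute $\xi=\xi(\Psi,\Gamma)$, the function $G$ is defined on $[0,\infty)\times\RR$ by
\begin{equation}
\label{Gdef}
G(\Gamma,\Psi) = \sum_{j=1}^N n_j^0(\infty)\gamma_j^0(\infty)\frac{z_j^2}{(1+\Gamma\Gamma_c\sigma_j)^2}\exp\left\{-z_j\Psi+\frac{L_B\Gamma\Gamma_c z_j^2}{1+\Gamma\Gamma_c\sigma_j}-p(\xi(\Psi,\Gamma))\right\}.
\end{equation}
For existence, I would fix $\Psi$ and consider the continuous function $H(\Gamma)=\Gamma^2-G(\Gamma,\Psi)$. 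At $\Gamma=0$ one has $H(0)=-G(0,\Psi)<0$ since the sum is strictly positive (there is at least one species, and all valencies are nonzero by (\ref{valence})). As $\Gamma\to+\infty$, each factor $1/(1+\Gamma\Gamma_c\sigma_j)^2$ tends to $0$ while the exponential factor stays bounded: indeed $\Gamma\Gamma_c z_j^2/(1+\Gamma\Gamma_c\sigma_j)\to z_j^2/(\Gamma_c\sigma_j)$ is bounded, and $p(\xi(\Psi,\Gamma))\geq 0$ so $\exp\{-p\}\leq 1$. Hence $G(\Gamma,\Psi)\to 0$ while $\Gamma^2\to+\infty$, so $H(\Gamma)>0$ for large $\Gamma$. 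By the intermediate value theorem there exists at least one $\Gamma(\Psi)>0$ with $H(\Gamma(\Psi))=0$. This establishes the first assertion.

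For uniqueness under (\ref{Bound1}), the key is to show that $H(\Gamma)=\Gamma^2-G(\Gamma,\Psi)$ is \emph{strictly increasing} in $\Gamma$ on $[0,\infty)$ for each fixed $\Psi$; then the zero is unique. The derivative of $\Gamma^2$ is $2\Gamma$, and I must control $\partial_\Gamma G$. Differentiating (\ref{Gdef}), each summand of $\partial_\Gamma G$ carries the factor $n_j^0(\infty)\gamma_j^0(\infty)\frac{z_j^2}{(1+\Gamma\Gamma_c\sigma_j)^2}\exp\{\cdots\}$ times a bracketed quantity coming from (i) the derivative of $(1+\Gamma\Gamma_c\sigma_j)^{-2}$, which gives $-2\Gamma_c\sigma_j/(1+\Gamma\Gamma_c\sigma_j)$, (ii) the derivative of the exponent $L_B\Gamma\Gamma_c z_j^2/(1+\Gamma\Gamma_c\sigma_j)$, which gives $+L_B\Gamma_c z_j^2/(1+\Gamma\Gamma_c\sigma_j)^2$, and (iii) the $-\partial_\Gamma p(\xi(\Psi,\Gamma))$ term. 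By Lemma \ref{lem.xi}, $\xi$ is increasing in $\Gamma$ and $p$ is increasing, so term (iii) is $\leq 0$; it therefore only helps and can be dropped from the estimate. The decisive bracket is thus $-2\Gamma_c\sigma_j/(1+\Gamma\Gamma_c\sigma_j)+L_B\Gamma_c z_j^2/(1+\Gamma\Gamma_c\sigma_j)^2$. I would show that under (\ref{Bound1}), $L_B z_j^2 < (6+4\sqrt2)\sigma_j$, this bracket stays bounded in a way that forces $2\Gamma - \partial_\Gamma G > 0$ pointwise; concretely, using $n_j \geq 0$ and the identity $\Gamma^2 = \sum_j n_j z_j^2/(1+\Gamma\Gamma_c\sigma_j)^2$ (equation (\ref{Gammaeps})) to rewrite $2\Gamma = 2\Gamma^2/\Gamma$, one reduces $H'(\Gamma)>0$ to a sum of per-species terms each of the form
\begin{equation}
\label{perspecies}
\frac{n_j z_j^2}{(1+\Gamma\Gamma_c\sigma_j)^2}\left(\frac{2}{\Gamma\Gamma_c}+\frac{2\sigma_j}{1+\Gamma\Gamma_c\sigma_j}-\frac{L_B z_j^2}{(1+\Gamma\Gamma_c\sigma_j)^2}\right)\;\geq\;0,
\end{equation}
which is exactly the structure appearing already in the rewriting of the coefficient $\Ae$ in the proof of Lemma \ref{lem.lin}. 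Setting $t=\Gamma\Gamma_c\sigma_j\in(0,\infty)$, positivity of the parenthesis is equivalent to $\frac{2(1+t)^2}{t}+2(1+t)\geq \frac{L_B z_j^2}{\sigma_j}$, i.e. to $\min_{t>0}\big(\tfrac{2(1+t)^2}{t}+2(1+t)\big)\geq L_B z_j^2/\sigma_j$; an elementary one-variable minimization gives the minimum value $6+4\sqrt2$ (attained at $t=1/\sqrt2$), which is precisely condition (\ref{Bound1}).

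Finally, differentiability of $\Gamma(\Psi)$ follows from the implicit function theorem applied to $H(\Gamma,\Psi)=0$: the map $(\Gamma,\Psi)\mapsto H(\Gamma,\Psi)$ is $C^1$ (since $\xi(\Psi,\Gamma)$ is smooth by Lemma \ref{lem.xi} and $p$ is smooth on $[0,1)$), and the partial derivative $\partial_\Gamma H = 2\Gamma-\partial_\Gamma G > 0$ at the solution by the strict monotonicity just proved, so it is nonzero; hence $\Gamma(\Psi)$ is locally — and, by uniqueness, globally — a differentiable function of $\Psi$. The main obstacle is the sharp per-species estimate (\ref{perspecies}): getting the exact constant $6+4\sqrt2$ requires the elementary but careful minimization of $t\mapsto 2(1+t)^2/t+2(1+t)$, and one must check that the negative contribution of term (iii) (the $p$-derivative) genuinely has the right sign so that it can be discarded rather than fought — this is where Lemma \ref{lem.xi}'s monotonicity of $\xi$ in $\Gamma$ is essential.
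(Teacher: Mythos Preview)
Your proposal is correct and follows essentially the same route as the paper: intermediate value theorem for existence (since $H(0)<0$ and $H(+\infty)=+\infty$), then the substitution $2\Gamma=2\Gamma^2/\Gamma$ together with the identity $\Gamma^2=\sum_j n_j z_j^2/(1+\Gamma\Gamma_c\sigma_j)^2$ at a root to reduce $H'(\Gamma)>0$ to a per-species sign condition, with the $p(\xi)$-contribution discarded thanks to the monotonicity from Lemma~\ref{lem.xi}. The only cosmetic difference is that the paper rewrites the per-species bracket as the quadratic $P(x)=4x^2+(6-L_Bz_j^2/\sigma_j)x+2$ in $x=\Gamma\Gamma_c\sigma_j$ and argues via its discriminant, whereas you minimize $t\mapsto 2/t+4t+6$ directly; both yield the same threshold $6+4\sqrt2$. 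One small point of wording: the identity you invoke holds only at a root of $H$, so strictly speaking you establish $H'>0$ \emph{at any zero} rather than on all of $[0,\infty)$ --- but this still forces uniqueness, and the paper's argument has exactly the same structure.
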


\begin{proof}
Existence of a solution is a consequence of the fact that, as functions
of $\Gamma$, the left hand side of (\ref{GammaL2}) spans $\RR^+$ while
the right hand side remains positive and bounded on $\RR^+$.

Denote by $F(\Gamma)$ the difference between the left and
the right hand sides of (\ref{GammaL2}).
Let us show that (\ref{Bound1}) implies that $F$ is an increasing function on $\RR^+$, and, moreover, $F'(\Gamma)>0$.
To this end we
use the trick $2\Gamma=2(\Gamma)^2/\Gamma$ and compute the derivative
\begin{gather}
F^\prime(\Gamma) = \sum^N_{j=1} n_j^0(\infty) \gamma_j^0(\infty)
\frac{z_j^2 \exp \big\{ - z_j \Psi + \frac{L_B \Gamma \Gamma_c z_j^2}{1+ \Gamma \Gamma_c \sigma_j}
- p(\xi) \big\}}{(1+\Gamma \Gamma_c \sigma_j)^2}
\left( \frac{2}{\Gamma} - \Gamma_c \frac{L_B z_j^2 - 2 \sigma_j (1+ \Gamma \Gamma_c \sigma_j )}{(1+ \Gamma \Gamma_c \sigma_j )^2} \right) \notag \\
+ \frac{\partial\xi}{\partial\Gamma} \, p^\prime(\xi) \sum^N_{j=1} n_j^0(\infty) \gamma_j^0(\infty)
\frac{z_j^2}{(1+\Gamma \Gamma_c \sigma_j)^2}
\exp \{ - z_j \Psi +
\frac{L_B \Gamma \Gamma_c z_j^2}{1+ \Gamma \Gamma_c \sigma_j}  - p(\xi)\} .
\label{eq.derivF}
\end{gather}
Lemma \ref{lem.xi} shows that $\partial\xi / \partial\Gamma>0$,
so the second line of (\ref{eq.derivF}) is positive.
Introducing $x=\Gamma \Gamma_c \sigma_j$, the sign of each term in the sum
of the first line of (\ref{eq.derivF})
is exactly that of the polynomial $P(x)= 4x^2 + (6-L_B z_j^2/ \sigma_j ) x +2$.
A simple computation shows that $P(x)$ has no positive roots (and thus is positive
for $x\geq0$) if and only if (\ref{Bound1}) holds true.

Since, $F(0)<0$ and $\lim_{+\infty} F(\Gamma)=+\infty$, the inequality $F'(\Gamma)>0$ yields the
existence and uniqueness of the root $\Gamma$ such that $F(\Gamma)=0$.
Then, a standard application of the implicit function theorem leads to the
differentiable character of $\Gamma(\Psi)$.

\end{proof}

\begin{remark}
\label{rem.multi}
The bound (\ref{Bound1}) is a sufficient, but not a necessary, condition for
uniqueness of the root $\Gamma(\Psi)$, solution of (\ref{GammaL2}). There
are other criteria (not discussed here) which ensure the uniqueness of $\Gamma(\Psi)$.
However there are cases when multiple solutions do exist: it is interpreted
as a phase transition phenomenon and it was studied, e.g., in \cite{joubaud}.
\end{remark}

In view of Lemma \ref{lem.gamma2} the solute packing fraction is now a
nonlinear function of the potential $\Psi$ that we denote by
$$
\tilde\xi(\Psi) \equiv \xi\Big(\Psi,\Gamma(\Psi)\Big) .
$$
{   As a result of our first step, the electrostatic equation
(\ref{BP000}) reduces to the following Poisson-Boltzmann equation which
is a nonlinear partial differential equation for the sole unknown $\Psi$}
\begin{equation}
\label{BP12}
\left\{ \begin{array}{ll}
\dsp - \Delta \Psi =\beta  \sum_{j=1}^N z_j n_j^0(\infty) \gamma_j^0(\infty) \exp \left\{ - z_j \Psi + \frac{L_B \Gamma(\Psi) \Gamma_c z_j^2}{1+ \Gamma(\Psi) \Gamma_c \sigma_j} -p(\tilde\xi(\Psi) ) \right\} \; \mbox{ in } \ \Omega_p , &  \\
\dsp \nabla \Psi \cdot \nu = -N_\sigma \Sigma^* \ \mbox{ on } \, \partial\Omega_p , \quad
 \Psi \; \mbox{ is } \Omega-\mbox{periodic}.&
\end{array} \right.
\end{equation}
{  Recall that $N_\sigma>0$ is a parameter and that $\Sigma^*(x)$
is assumed to be a $\Omega$-periodic function in $L^\infty(\p \Omega_p)$.
Our goal is to prove existence of at least one solution to problem (\ref{BP12}).
The main difficulty is the non-linearity of the right hand side which is
growing exponentially fast at infinity.}
Recalling definition (\ref{E_j}) of $E_j(\Psi)$, the right hand side of (\ref{BP12})
is the nonlinear function $\Phi$ defined by its derivative
\begin{equation}
\label{pres}
\Phi^\prime (\Psi) = \beta\sum^N_{j=1} E_j^\prime(\Psi) .
\end{equation}
In the ideal case, Remark \ref{rem.E_j} tells us that
$E_j(\Psi)=n_j^0(\infty) \exp \{ - z_j \Psi \}$.
We are thus lead to introduce
\begin{equation}
\label{idealg}
g(\psi) = \sum^N_{j=1} n_j^0(\infty)\gamma_j^0(\infty) \exp \left\{ - z_j \psi \right\} ,
\quad \psi\in \mathbb{R} ,
\end{equation}
which is a strictly convex function.
{  In the ideal case, we have $\Phi(\Psi) = \beta g (\Psi)$ and
the existence and uniqueness of a solution of (\ref{BP12}) is more or less
standard thanks to a monotonicity argument (see \cite{L:06}, \cite{PoissBoltz:12}).}
For the MSA model our strategy of proof is different since $\Phi$ is not
anymore convex. We rely on a truncation argument, $L^\infty$-bounds and
still some monotonicity properties of part of $\Phi^\prime$. Our proof
requires a smallness condition on the characteristic value $\xi_c$.

{   The second step of our proof introduces
a truncation operator at the level $M>0$ defined, for any function $\varphi$, by}
$$
T_M(\varphi) = \left\{ \begin{array}{ll}
- \frac{M}{z_N} & \mbox{ if } \varphi <- \frac{M}{z_N} , \\
\varphi & \mbox{ if } - \frac{M}{z_N} \leq \varphi \leq \frac{M}{|z_1|} , \\
\frac{M}{|z_1|} & \mbox{ if } \varphi > \frac{M}{|z_1|} .
\end{array} \right.
$$
Note that this truncation is not symmetric since the growth condition at
$\pm\infty$ of $\Phi$ and $g$ are not symmetric too.
We define a "cut-off" function $\Phi_M$ by its derivative
\begin{equation}
\label{cutoof}
\Phi_M^\prime (\Psi) = \Phi^\prime \circ T_M (\Psi) ,
\end{equation}
and solve the associated "cut-off" problem
\begin{equation}
\label{BP12cutoff}
\left\{ \begin{array}{ll}
\dsp - \Delta \Psi_M =- \Phi_M^\prime (\Psi_M) & \mbox{ in } \ \Omega_p , \\
\dsp \nabla \Psi_M \cdot \nu = -N_\sigma \Sigma^* & \mbox{ on } \, \partial\Omega_p ,\
 \Psi_M \; \mbox{ is } \Omega-\mbox{periodic}.
\end{array} \right.
\end{equation}
Note that $\Phi_M^\prime(\Psi)$ is a bounded Lipschitz function and its primitive
$\Phi_M(\Psi)$ is a coercive $C^1$-function, with a linear growth at infinity.
Therefore, for $\Sigma^* \in L^\infty (\p \Omega_p)$ and $M$ sufficiently large,
the corresponding functional
$$
J(\psi) = \frac{1}{2} \int_{\Omega_p} | \nabla \psi |^2 + \int_{\Omega_p} \Phi_M (\psi)
+ N_\sigma \int_{\p \Omega_p} \Sigma^* \psi
$$
is lower semi-continuous with respect to the weak topology of $H^1$ and coercive on $H^1$.
Then the basic calculus of variations yields existence of at least one solution for
problem (\ref {BP12cutoff}). Furthermore, for smooth domains, $\Psi_M$ belongs to
$W^{2,q}(\Omega_p)$ for all $q<+\infty$.

{   The third step of our proof amounts
to prove an $L^\infty$- estimate for $\Psi_M$ such that,
for $M$ sufficiently large, it implies $\Phi_M (\Psi_M) = \Phi (\Psi_M)$
and, consequently, existence of at least one solution for problem (\ref{BP12}).}
We start by some simple lemmas giving bounds on the solute packing fraction $\xi$.

\begin{lemma}
\label{propertiesxi}
Let $p(\xi)$, $\xi\equiv\xi(\Psi,\Gamma)$ and $g(\psi)$
be given by (\ref{gammaeps}), (\ref {sol.xi}) and (\ref{idealg}) respectively.
Then we have
\begin{equation}
\label{xi.bound1}
\mathcal{A}_{\min} g(\Psi) \leq \xi e^{p(\xi)} \leq \mathcal{A}_{\max} g(\Psi) ,
\end{equation}
with
\begin{gather*}
\mathcal{A}_{\min}  =  \xi_c \min_r (\frac{\sigma_r}{\sigma_c})^3 , \quad
\mathcal{A}_{\max} =  \xi_c \max_r (\frac{\sigma_r}{\sigma_c})^3 e^{L_B \max_j \frac{z_j^2}{\sigma_j}} .
\end{gather*}
Let $\xi_0$ be the unique solution of $x \exp \{p(x)\} = \mathcal{A}_{\min} g_m $
where $g_m$ is the minimal value of $g(\psi)$. Then we have
\begin{equation}
\label{xi.bound2}
\xi_0 \leq \xi \leq \mathcal{A}_{\max} g(\Psi).
\end{equation}
\end{lemma}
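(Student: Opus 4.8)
The plan is to extract both inequalities directly from the algebraic identity obtained by multiplying (\ref{sol.xi}) by $e^{p(\xi)}$, namely
$\xi\, e^{p(\xi)} = \xi_c \sum_{j=1}^N (\sigma_j/\sigma_c)^3 n_j^0(\infty)\gamma_j^0(\infty)\exp\{-z_j\Psi + L_B\Gamma\Gamma_c z_j^2/(1+\Gamma\Gamma_c\sigma_j)\}$, and then comparing the $j$-th summand with the $j$-th summand $n_j^0(\infty)\gamma_j^0(\infty)e^{-z_j\Psi}$ of $g(\Psi)$ termwise.

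First I would record the elementary two-sided bound on the extra factors occurring in each summand. Since $L_B,\Gamma,\Gamma_c,\sigma_j\ge 0$ and $1+\Gamma\Gamma_c\sigma_j\ge \Gamma\Gamma_c\sigma_j$, one has $1\le \exp\{L_B\Gamma\Gamma_c z_j^2/(1+\Gamma\Gamma_c\sigma_j)\}\le \exp\{L_B z_j^2/\sigma_j\}\le \exp\{L_B\max_j z_j^2/\sigma_j\}$. Combining this with $\min_r(\sigma_r/\sigma_c)^3\le (\sigma_j/\sigma_c)^3\le \max_r(\sigma_r/\sigma_c)^3$ and summing over $j$ gives $\mathcal{A}_{\min} g(\Psi)\le \xi\,e^{p(\xi)}\le \mathcal{A}_{\max} g(\Psi)$, which is exactly (\ref{xi.bound1}).

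For (\ref{xi.bound2}) I would first note that $g$ attains a strictly positive minimum $g_m>0$: $g$ is strictly convex and, because (\ref{valence}) provides both a positive and a negative valency, $g(\psi)\to +\infty$ as $\psi\to\pm\infty$, so the minimum is attained and is positive since $g>0$ everywhere. Next, the map $x\mapsto x\,e^{p(x)}$ is a strictly increasing continuous bijection of $[0,1)$ onto $[0,+\infty)$: the numerator $8-9x+3x^2$ of $p(x)$ has negative discriminant, hence is positive on $[0,1)$, so $p\ge 0$ and $p$ is increasing there (as already noted in the proof of Lemma \ref{lem.xi}), while $x\,e^{p(x)}\to+\infty$ as $x\to 1^-$; in particular $\xi_0$ is well-defined and unique. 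Now $p(\xi)\ge 0$ yields $\xi\le \xi\,e^{p(\xi)}$, so the upper bound of (\ref{xi.bound1}) already gives $\xi\le \mathcal{A}_{\max} g(\Psi)$; and the lower bound of (\ref{xi.bound1}) together with $g(\Psi)\ge g_m$ gives $\xi\,e^{p(\xi)}\ge \mathcal{A}_{\min} g_m = \xi_0\,e^{p(\xi_0)}$, whence $\xi\ge\xi_0$ by the monotonicity of $x\mapsto x\,e^{p(x)}$.

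There is no genuine obstacle in this lemma; it is entirely elementary. The only points requiring a little care are the sign of the numerator of $p$ (to guarantee $p\ge 0$ and the strict monotonicity of $x\mapsto x\,e^{p(x)}$ on $[0,1)$) and the coercivity of $g$ ensuring $g_m>0$, both of which are immediate.
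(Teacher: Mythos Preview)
Your proof is correct and follows essentially the same approach as the paper's own proof: both start from the identity $\xi e^{p(\xi)}=\xi_c\sum_j(\sigma_j/\sigma_c)^3 n_j^0(\infty)\gamma_j^0(\infty)\exp\{-z_j\Psi+L_B\Gamma\Gamma_c z_j^2/(1+\Gamma\Gamma_c\sigma_j)\}$, bound the exponential factor via $0\le L_B\Gamma\Gamma_c z_j^2/(1+\Gamma\Gamma_c\sigma_j)\le L_B z_j^2/\sigma_j$, and then use $p\ge0$ together with the monotonicity of $x\mapsto x e^{p(x)}$ for (\ref{xi.bound2}). You simply add a little extra justification (that $g$ attains a positive minimum and that $\xi_0$ is well-defined), which the paper leaves implicit.
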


\begin{proof}
Formula (\ref{sol.xi}) yields
$$
\xi \exp \{ p(\xi) \} = \xi_c \sum_{j=1}^N (\frac{\sigma_j}{\sigma_c})^3
n_j^0(\infty)\gamma_j^0(\infty) \exp \left\{ - z_j \Psi +
\frac{L_B \Gamma \Gamma_c z_j^2}{1+ \Gamma \Gamma_c \sigma_j} \right\} .
$$
Since
$$
0 < \frac{L_B \Gamma \Gamma_c z_j^2}{1+ \Gamma \Gamma_c \sigma_j}
\leq L_B \max_j \frac{z_j^2}{\sigma_j} ,
$$
we deduce the bound (\ref{xi.bound1}). The other bound (\ref{xi.bound2}) is
then a consequence of the fact that $p(\xi)\geq0$ and $\xi\to\xi e^{p(\xi)}$
is increasing.
\end{proof}

\medskip

For the sequel it is important to find a bound for $\xi$ which is independent
of $\xi_c$, small as we wish, at least for large values of the potential $\Psi$.

\begin{lemma}
\label{propertiesxi2}
Let $\xi\equiv\xi(\Psi,\Gamma)$ be the unique solution of (\ref{sol.xi}).
There exists a threshold $0<\xi^{\rm cr}<1$ such that, for any number $q\geq1$, there
exist positive values $\xi_{\min}, \xi_{\max} >0$ such that, for any characteristic value
$0<\xi_c<\xi^{\rm cr}/q$,
$$
\xi \geq \xi_{\min} \; \mbox{ if }  \Psi < \frac{-1}{z_N} \log \frac{1}{q\xi_c} , \quad
\xi \geq \xi_{\max} \; \mbox{ if }  \Psi > \frac{1}{|z_1|} \log \frac{1}{q\xi_c} .
$$
\end{lemma}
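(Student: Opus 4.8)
The plan is to reduce the statement entirely to the lower half $\xi e^{p(\xi)}\ge \mathcal{A}_{\min}\,g(\Psi)$ of the two-sided bound already proved in Lemma \ref{propertiesxi}, combined with the exponential growth of $g$ forced by the sign condition (\ref{valence}). First I would record the elementary monotonicity fact that $x\mapsto x e^{p(x)}$ is a continuous strictly increasing bijection of $[0,1)$ onto $[0,+\infty)$: since $p\ge0$ is increasing on $[0,1)$ (its derivative $p'(\xi)=(8-2\xi)/(1-\xi)^4$ is positive, as used in Lemma \ref{lem.xi}) and $p(\xi)\to+\infty$ as $\xi\to1^-$, the derivative $e^{p(x)}(1+xp'(x))$ is strictly positive and the map blows up at $1$. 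Consequently, for any constant $c>0$ there is a unique $\xi(c)\in(0,1)$ with $\xi(c)e^{p(\xi(c))}=c$, and $\xi e^{p(\xi)}\ge c$ is equivalent to $\xi\ge\xi(c)$; this is the device that will turn a lower bound on $\xi e^{p(\xi)}$ into the desired lower bound on $\xi$.

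Next I would isolate the dominant exponential in $g(\psi)=\sum_{j=1}^N n_j^0(\infty)\gamma_j^0(\infty)e^{-z_j\psi}$. Being a sum of positive terms, and recalling $z_1<0<z_N$ from (\ref{valence}), it satisfies
$$g(\psi)\ge n_N^0(\infty)\gamma_N^0(\infty)\,e^{-z_N\psi}\qquad\mbox{and}\qquad g(\psi)\ge n_1^0(\infty)\gamma_1^0(\infty)\,e^{|z_1|\psi}.$$
I would then fix the threshold $\xi^{\rm cr}\in(0,1)$ small enough that, for all $0<\xi_c<\xi^{\rm cr}$, the reservoir activity coefficients obey $\gamma_1^0(\infty),\gamma_N^0(\infty)\ge c_0>0$ with $c_0$ independent of $\xi_c$. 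This is legitimate because, by the explicit computation in the proof of Lemma \ref{lem.gamma0} together with Remark \ref{rem.xic}, $\Gamma^{0,\ep}(0)$ does not depend on $\xi_c$ while $\tilde\xi(0)=O(\xi_c)$, so $\gamma_j^0(\infty)=\exp\{p(\tilde\xi(0))-L_B\Gamma^{0,\ep}(0)\Gamma_c z_j^2/(1+\Gamma^{0,\ep}(0)\Gamma_c\sigma_j)\}$ converges to a strictly positive limit as $\xi_c\to0$.

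Now the estimate is immediate, with $\mathcal{A}_{\min}=\xi_c\min_r(\sigma_r/\sigma_c)^3$ as in Lemma \ref{propertiesxi}. If $\Psi<-\frac{1}{z_N}\log\frac{1}{q\xi_c}$ then $e^{-z_N\Psi}>(q\xi_c)^{-1}$, whence
$$\xi\,e^{p(\xi)}\ \ge\ \mathcal{A}_{\min}\,g(\Psi)\ \ge\ \xi_c\Big(\min_r(\sigma_r/\sigma_c)^3\Big)n_N^0(\infty)\gamma_N^0(\infty)\,\frac{1}{q\xi_c}\ \ge\ \frac{c_-}{q},$$
with $c_-:=c_0\,n_N^0(\infty)\min_r(\sigma_r/\sigma_c)^3>0$ independent of $\xi_c$ --- the cancellation of $\xi_c$ being exactly the point. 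By the first step this forces $\xi\ge\xi_{\min}$, where $\xi_{\min}\in(0,1)$ is the unique root of $xe^{p(x)}=c_-/q$, a number depending only on $q$ and the fixed physical data. The case $\Psi>\frac{1}{|z_1|}\log\frac{1}{q\xi_c}$ is identical, using $e^{|z_1|\Psi}>(q\xi_c)^{-1}$ and bounding $g$ below by its $j=1$ term, and yields $\xi\ge\xi_{\max}$ with $\xi_{\max}$ the root of $xe^{p(x)}=c_+/q$, $c_+:=c_0\,n_1^0(\infty)\min_r(\sigma_r/\sigma_c)^3$. The hypothesis $\xi_c<\xi^{\rm cr}/q$ serves only to guarantee $q\xi_c<\xi^{\rm cr}<1$, so that $\log\frac{1}{q\xi_c}>0$ and the two thresholds on $\Psi$ are, respectively, negative and positive --- the regime in which this bound is meant to be applied. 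I do not expect any real obstacle: the only points needing care are the uniform-in-$\xi_c$ positivity of $\gamma_1^0(\infty),\gamma_N^0(\infty)$ (supplied by Remark \ref{rem.xic}) and the bijectivity of $x\mapsto xe^{p(x)}$ used to invert the inequality.
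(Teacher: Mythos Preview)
Your proof is correct and follows essentially the same route as the paper: use the lower bound $\xi e^{p(\xi)}\ge\mathcal{A}_{\min}g(\Psi)$ from Lemma~\ref{propertiesxi}, extract the dominant exponential in $g$ so that the factor $\xi_c$ in $\mathcal{A}_{\min}$ cancels against $(q\xi_c)^{-1}$, and invert via the bijection $x\mapsto xe^{p(x)}$ to define $\xi_{\min}$. Your presentation is in fact slightly cleaner than the paper's, since you bound $g(\Psi)\ge n_N^0(\infty)\gamma_N^0(\infty)e^{-z_N\Psi}$ directly by positivity of the remaining terms, whereas the paper tracks the subdominant terms as an $O(\xi_c^{1-z_{N-1}/z_N})$ correction and then absorbs them into a factor $\tfrac12$; but the substance is the same.
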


\begin{remark}
The point in Lemma \ref{propertiesxi2} is that the lower bounds $\xi_{\min}, \xi_{\max}>0$
are independent of $\xi_c$ (but they depend on $q$), on the contrary of $\xi_0$ in Lemma \ref{propertiesxi}. In the proof of Proposition \ref{Linft} the number $q$ will be chosen
as $O(1)$ with respect to $\xi_c$.
\end{remark}

\begin{proof}
We improve the lower bound for equation (\ref{GammaL2}) when the potential
is very negative $\Psi < (\log(q\xi_c))/z_N$.
From (\ref{xi.bound1}) we deduce for small $\xi_c$
\begin{gather*}
\xi e^{p(\xi)} \geq \mathcal{A}_{\min} g(\Psi) = n_N^0(\infty) \gamma_N^0(\infty)
\min_r (\frac{\sigma_r}{\sigma_c})^3 \frac{\xi_c}{q\xi_c}
\left( 1 + O(\xi_c^{1-z_{N-1} /z_N }) \right) \\ \geq
\frac{1}{2q} n_N^0(\infty) \gamma_N^0(\infty) \min_r (\frac{\sigma_r}{\sigma_c})^3 = O(1),
\end{gather*}
where the lower bound is independent of $\xi_c$. The conclusion follows by
defining $\xi_{\min}$ as the unique solution of
\begin{gather*}
\xi_{\min} e^{p(\xi_{\min})} = \frac{1}{2q} n_N^0(\infty) \gamma_N^0(\infty)
\min_r (\frac{\sigma_r}{\sigma_c})^3 .
\end{gather*}
Note that $\xi_{\min}$ is uniformly bounded away from 0 for small
$\xi_c$ since $\gamma_N^0(\infty) = O(1)$ by virtue of Remark \ref{rem.xic}.

The proof of the estimate for large values $\Psi > (\log(q\xi_c))/z_1$ is analogous.
\end{proof}

Now the upper bound in Lemma \ref{propertiesxi} implies
that for $\xi=\xi(\Psi,\Gamma)$ we have
\begin{gather}
\label{boundxi}
\frac{\xi_{\min}}{ g(\Psi) \xi_c \max_r (\frac{\sigma_r}{\sigma_c})^3} e^{- L_B \max_j \frac{z_j^2}{\sigma_j}} \leq e^{-p(\xi)} , \quad \mbox{for} \quad \Psi < \frac{-1}{z_N} \log \frac{1}{q\xi_c}.
\end{gather}
Indeed, by (\ref{xi.bound1}) and Lemma \ref{propertiesxi2}
$$
e^{-p(\xi)}\ \geq \ \frac{\xi}{\mathcal{A}_{\rm max}g(\Psi)}
\ \geq\ \frac{\xi_{\rm min}}{g(\Psi)\xi_c\max_r\big(\frac{\sigma_r}{\sigma_c}\big)^3}\,
e^{-L_B \max_j \frac{z_j^2}{\sigma_j}}.
$$

For the purpose of comparison we introduce the following auxiliary Neumann problem
\begin{equation}
\label{BU1}
\left\{  \begin{array}{ll}
\dsp - \Delta U = \frac{1}{|\Omega_p|} \int_{\p\Omega_p} N_\sigma \Sigma^* \, dS
& \mbox{ in } \ \Omega_p , \\
\dsp \nabla U  \cdot \nu = - N_\sigma \Sigma^* & \mbox{ on } \, \p\Omega_p , \\
U \; \mbox{ is } \Omega-\mbox{periodic} , & \int_{\Omega_p} U (x) \, dx =0.
\end{array} \right.
\end{equation}
Remark that (\ref{BU1}) admits a solution $U\in H^1_\#(\Omega_p)$ since the
bulk and surface source terms are in equilibrium. Furthermore, the zero
average condition of the solution gives its uniqueness.  It is known that
$U$ is continuous and achieves its minimum and maximum in $\overline\Omega_p$.
Define
$$
\overline{\sigma} = \frac{1}{|\Omega_p|} \int_{\p \Omega_p}
N_\sigma \Sigma^* \, dS \ , \quad
U_{\min} =\min_{x\in \overline\Omega_p} U(x) \ \mbox{ and } \quad
U_{\max} =\max_{x\in \overline\Omega_p} U(x) .
$$
Then our $L^\infty$-bound reads as follows.

\begin{proposition}
\label{Linft}
Let $\Psi_M$ be a solution for the cut-off problem (\ref{BP12cutoff}) and take
$$
M = \log \frac{1}{\xi_c} .
$$
Under assumption (\ref{Bound1}),
there exists a critical value $\xi^{\rm cr}>0$ such that, for any
$\xi_c \in (0, \xi^{\rm cr} )$, the solution $\Psi_M$ of (\ref{BP12cutoff}) satisfies the following bounds
\begin{gather}
-\frac{M}{z_N} \leq \Psi_M (x) \leq \frac{M}{|z_1|} .
\label{Linfty2}
\end{gather}
\end{proposition}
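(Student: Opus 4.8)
The plan is to prove the two-sided estimate by a truncation/maximum-principle argument, exploiting the fact that the cut-off in (\ref{BP12cutoff}) freezes the nonlinearity above the level $M/|z_1|$ (and below $-M/z_N$), and that for the tuned value $M=\log(1/\xi_c)$ this frozen value is a very large constant of the \emph{right sign}. First I would record the weak form of (\ref{BP12cutoff}): for every $\Omega$-periodic $\varphi\in H^1(\Omega_p)$,
$$
\int_{\Omega_p}\nabla\Psi_M\cdot\nabla\varphi\,dx+\int_{\Omega_p}\Phi_M^\prime(\Psi_M)\,\varphi\,dx+N_\sigma\int_{\partial\Omega_p}\Sigma^*\varphi\,dS=0 .
$$
To deal with the surface term I would subtract the weak form of the auxiliary problem (\ref{BU1}): the function $w:=\Psi_M-U$ is $\Omega$-periodic, obeys a homogeneous Neumann condition, and satisfies $-\Delta w=-\Phi_M^\prime(\Psi_M)-\overline\sigma$ in $\Omega_p$. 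For the upper bound I then test with $\varphi=(w-k)^+$, $k$ chosen in terms of $M/|z_1|$ and $U_{\min}$ so that $\varphi>0$ only where $\Psi_M>M/|z_1|$; on that set the truncation gives $\Phi_M^\prime(\Psi_M)=\Phi^\prime(M/|z_1|)$, whence
$$
\int_{\Omega_p}|\nabla(w-k)^+|^2\,dx=-\bigl(\Phi^\prime(M/|z_1|)+\overline\sigma\bigr)\int_{\Omega_p}(w-k)^+\,dx .
$$

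The core of the proof is the claim that $\Phi^\prime(M/|z_1|)$ is positive and grows like $1/\xi_c$ as $\xi_c\to0$, with a rate independent of $\xi_c$. Writing $\Phi^\prime=\beta\sum_j E_j^\prime$ and recalling that $E_j^\prime(\Psi)$ is, up to a bounded MSA screening factor, proportional to $\exp\{-z_j\Psi\}\exp\{-p(\tilde\xi(\Psi))\}$, the point of taking $M=\log(1/\xi_c)$ is that at $\Psi=M/|z_1|$ one has exactly $\exp\{-z_1\Psi\}=1/\xi_c$, so the $j=1$ (most negative valence) term blows up like $1/\xi_c$ --- unless the packing-fraction factor $\exp\{-p(\tilde\xi(\Psi))\}$ were to collapse to $0$. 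This is exactly what Lemmas \ref{propertiesxi}, \ref{propertiesxi2} and inequality (\ref{boundxi}) rule out: for $\Psi$ beyond the level $\frac{1}{|z_1|}\log\frac{1}{q\xi_c}$ with $q=O(1)$, the factor $\exp\{-p(\tilde\xi(\Psi))\}$ is bounded below uniformly in $\xi_c$ (in contrast to the crude, $\xi_c$-dependent bound $\xi_0$ of Lemma \ref{propertiesxi}). One then checks that the remaining terms cannot spoil the sign --- the other anion terms have the same sign and are of strictly smaller order in $1/\xi_c$, while the cation terms are $o(1)$ --- so $\Phi^\prime(M/|z_1|)\ge c/\xi_c$ for some $c>0$ independent of $\xi_c$. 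Consequently, for $\xi_c$ below some critical $\xi^{\rm cr}$ the constant $c/\xi_c$ dominates the fixed quantity $\overline\sigma$ (and the surface contribution routed through $U$), the right-hand side of the displayed identity is nonpositive while the left-hand side is nonnegative, both therefore vanish, and $(w-k)^+\equiv0$; this yields the upper bound $\Psi_M\le M/|z_1|$. The lower bound $\Psi_M\ge-M/z_N$ is obtained symmetrically with the test function built from $(-w-k')^+$: at $\Psi=-M/z_N$ it is the term $\exp\{-z_N\Psi\}=1/\xi_c$ that takes over, and the second half of Lemma \ref{propertiesxi2} keeps $\exp\{-p(\tilde\xi(\Psi))\}$ bounded below there, so that $\Phi^\prime(-M/z_N)$ is large and negative.

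The main obstacle is precisely this uniform-in-$\xi_c$ control of $\exp\{-p(\tilde\xi(\Psi))\}$ for large $|\Psi|$: without it the frozen nonlinearity would not dominate the data, and the truncation identity would not force the super-level (and sub-level) sets to be negligible. This is the reason for the whole chain of Lemmas \ref{lem.xi}--\ref{propertiesxi2}, which trade the $\xi_c$-dependent lower bound on $\xi$ for a $\xi_c$-independent one valid on the relevant half-lines in $\Psi$, and for the seemingly ad hoc choice $M=\log(1/\xi_c)$, tuned so that the dominant exponential exactly cancels the $\xi_c$ coming from the smallness of the packing fraction; a secondary technical point is the bookkeeping of the surface term $N_\sigma\Sigma^*$ (absorbed through $U$ into the constant $\overline\sigma$, and, if needed, a trace inequality) relative to the diverging quantity $M/|z_1|$. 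Throughout one uses assumption (\ref{Bound1}) via Lemma \ref{lem.gamma2}, which guarantees that $\Gamma(\Psi)$, hence $\tilde\xi(\Psi)$ and the whole right-hand side of (\ref{BP12cutoff}), are well-defined smooth functions of the single unknown $\Psi$, and elliptic regularity on the smooth domain $\Omega_p$ ($\Psi_M\in W^{2,q}(\Omega_p)\hookrightarrow C(\overline\Omega_p)$), which legitimizes the pointwise statements and level sets.
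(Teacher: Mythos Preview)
Your overall strategy --- subtract the auxiliary $U$ from (\ref{BU1}), test with a one-sided truncation of $w=\Psi_M-U$, and use that the frozen value $\Phi'(M/|z_1|)$ is of order $1/\xi_c$ thanks to Lemma~\ref{propertiesxi2} and (\ref{boundxi}) --- is exactly the right circle of ideas, and your identification of the role of $M=\log(1/\xi_c)$ is on target. However, the argument as written does \emph{not} deliver the bound (\ref{Linfty2}); it only gives $\Psi_M\le M/|z_1|+(U_{\max}-U_{\min})$.

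The reason is the mismatch between level sets of $w$ and of $\Psi_M$. To ensure that the truncation is active (so that $\Phi_M'(\Psi_M)$ equals the constant $\Phi'(M/|z_1|)$) on the support of $(w-k)^+$, you are forced to take $k\ge M/|z_1|-U_{\min}$. The conclusion $(w-k)^+\equiv0$ then only yields $\Psi_M\le U+k\le U_{\max}+M/|z_1|-U_{\min}$, which overshoots the target by the fixed constant $U_{\max}-U_{\min}$. This matters: the purpose of Proposition~\ref{Linft} is precisely that $T_M(\Psi_M)=\Psi_M$, so an $O(1)$ overshoot is fatal for Theorem~\ref{thm.main}. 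You cannot repair this by lowering $k$ (then the freeze is no longer guaranteed on the whole support), nor by enlarging the truncation level (that changes the cut-off problem).

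The paper closes this gap with two additional ingredients you are missing. First, instead of relying on the nonlinearity being a constant on the support of the test function, it adds and subtracts $e^{-z_jT_M(U-C)}$ and uses the \emph{monotonicity} of $v\mapsto -z_je^{-z_jT_M(v)}$ (with the positive MSA factor kept at $T_M(\Psi_M)$); see the second integral in (\ref{VVP}). This makes the comparison work on the full set $\{\Psi_M<U-C\}$, not only where the truncation saturates. Second, the level is set at an \emph{intermediate} value $\tilde M=\log\frac{1}{q\xi_c}<M$ via $C=U_{\max}+\tilde M/z_N$; after proving $(\Psi_M-U+C)^-\equiv0$ one gets $\Psi_M\ge U_{\min}-U_{\max}-\tilde M/z_N$, and the slack $\frac{1}{z_N}\log q$ is then chosen to absorb $U_{\max}-U_{\min}$, yielding exactly $\Psi_M\ge -M/z_N$. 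The estimate (\ref{boundxi}) is applied on the range $T_M(\Psi_M)\in[-M/z_N,-\tilde M/z_N)$, where it is uniform, to show the coefficient $Q$ in (\ref{bound13}) is nonpositive. Your pure constant-freeze argument bypasses the monotonicity step and therefore cannot exploit this intermediate level.
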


\begin{proof}
We write the variational formulation for $\Psi_M- U$ for any
smooth $\Omega$-periodic function $\varphi$. Taking into account the
definition $\Phi_M^\prime (\Psi_M) = \Phi^\prime (T_M (\Psi_M))$,
it reads
\begin{gather}
\int_{\Omega_p} \nabla (\Psi_M - U) \cdot\nabla \varphi \, dx  \notag \\
- \beta \sum_{j=1}^N z_j n_j^0(\infty) \gamma_j^0(\infty) \int_{\Omega_p}
\big( e^{-z_j T_M (\Psi_M)} -e^{-z_j T_M(U-C) } \big)
e^{L_B \frac{z_j^2 \Gamma_c \Gamma (T_M (\Psi_M))}{1+ \Gamma_c \Gamma (T_M(\Psi_M)) \sigma_j } - p(\xi(T_M (\Psi_{ M})))} \varphi \, dx  \notag \\
+ \int_{\Omega_p} \big( -\beta \sum_{j=1}^N z_j n_j^0(\infty) \gamma_j^0(\infty) e^{-z_j T_M(U-C) } e^{L_B \frac{z_j^2 \Gamma_c \Gamma (T_M (\Psi_M))}{1+ \Gamma_c \Gamma (T_M(\Psi_M)) \sigma_j } - p(\xi(T_M (\Psi_{M})))} + \overline{\sigma} \big) \varphi \, dx =0 .
\label{VVP}
\end{gather}
We take $\varphi (x) = (\Psi_M (x) -U(x) + C)^-$, where $C$ is a constant to be determined
and, as usual, the function $f^-=\min(f,0)$ is the negative part of $f$. The first term in (\ref{VVP}) is thus non-negative.

By monotonicity of $v\to -z_j \exp \{ -z_j T_M(v) \}$ the second term of (\ref{VVP}) is non-negative.
To prove that the third one is non-negative too (which would imply that $\varphi\equiv0$),
it remains to choose $C$ in such a way that the coefficient $Q$ in front of $\varphi$
in the third term is non-positive.

For a given number $q\geq1$ (to be defined later, independent of $\xi_c$) we define constants
$$
\tilde M = \log \frac{1}{q\xi_c} \leq
M = \log \frac{1}{\xi_c}
$$
and we choose $C = U_{\max} + \tilde M /z_N$.
Since $\varphi\neq0$ if and only if $\Psi_M<U-C$, we restrict the following
computation to these negative values of $\Psi_M$. In such a case, we have
$\Psi_M<\frac{-1}{z_N} \log \frac{1}{q\xi_c}$ (the same is true
for $T_M(\Psi_M)$) so we can apply (\ref{boundxi})
from Lemma \ref{propertiesxi2}.
Then, since $-M/z_N\leq T_M(U-C) \leq -\tilde M/z_N$,
we bound the coefficient $Q$ (decomposing the indices in $j^-$ for
negative valencies and $j^+$ for positive ones)
\begin{gather}
Q = \overline{\sigma} - \beta \sum_{j=1}^N z_j n_j^0(\infty) \gamma_j^0(\infty) e^{-z_j T_M(U-C) }
e^{L_B \frac{z_j^2 \Gamma_c \Gamma(T_M(\Psi_M))}{1+ \Gamma_c \Gamma(T_M(\Psi_M)) \sigma_j}
- p(\xi(T_M (\Psi_{M})))} \leq  \notag \\
\overline{\sigma} - \beta \sum_{j\in j^-} z_j n_j^0(\infty) \gamma_j^0(\infty)
e^{L_B \max_j \frac{z_j^2}{\sigma_j}} - \beta \sum_{j\in j^+} z_j n_j^0(\infty) \gamma_j^0(\infty)
e^{z_j \tilde M /z_N} e^{- p( \xi(T_M(\Psi_M)))}
\underbrace{\leq}_{\mbox{using (\ref{boundxi})}}\notag \\
\overline{\sigma} - \beta \sum_{j\in j^-} z_j n_j^0(\infty) \gamma_j^0(\infty)
e^{L_B \max_j \frac{z_j^2}{\sigma_j}} - \beta \sum_{j\in j^+} z_j n_j^0(\infty) \gamma_j^0(\infty)
\frac{\xi_{\min} e^{z_j \tilde M /z_N} e^{- L_B \max_j \frac{z_j^2}{\sigma_j}}}{ g(T_M (\Psi_M)) \xi_c \max_r (\frac{\sigma_r}{\sigma_c})^3}   .
\label{bound11}
\end{gather}
Next, for small $\xi_c$ (i.e. very negative values of $\Psi_M$),
the function $g(T_M(\Psi_M))$ is decreasing (and
equivalent to $n_N^0(\infty)e^{-z_N T_M(\Psi_M)}$ at $-\infty$)
$$
g(T_M(\Psi_M)) \leq g(-M/z_N) .
$$
Thus
\begin{gather}
\sum_{j\in j^+} z_j n_j^0(\infty) \gamma_j^0(\infty) \frac{e^{z_j \tilde M /z_N}}{g(T_M(\Psi_M))}
\geq \frac{1}{g(-M/z_N)} \sum_{j\in j^+} z_j n_j^0(\infty) \gamma_j^0(\infty) e^{z_j \tilde M /z_N} \notag \\
\geq z_N \frac{\xi_c (1+o(1)) }{q\xi_c (1+o(1))} = \frac{z_N}{q} (1+o(1)) .
\label{bound12}
\end{gather}
We insert inequality (\ref{bound12}) into the last term in (\ref{bound11})
which yields
\begin{equation}
\label{bound13}
Q \leq \overline{\sigma} - \beta \sum_{j\in j^-} z_j n_j^0(\infty) \gamma_j^0(\infty)
e^{L_B \max_j \frac{z_j^2}{\sigma_j}} - \beta
\frac{z_N \xi_{\min}}{q \xi_c \max_r (\frac{\sigma_r}{\sigma_c})^3}
e^{- L_B \max_j \frac{z_j^2}{\sigma_j}}(1+o(1)) .
\end{equation}
Then, recalling that $\xi_{\min}$ and $\gamma_j^0(\infty)$ are $O(1)$ for
small $\xi_c$, it follows that, for given $q\geq1$, there exists
$\xi^{\rm cr} < 1$ such that, for any $0< \xi_c \leq \xi^{\rm cr}$,
the expression on the right hand side of (\ref{bound13}) is negative.

Now we conclude that $\varphi = (\Psi_M-U+C)^- =0$, which implies
$\psi_M \geq U-U_{\max} - \frac{1}{z_N} \log \frac{1}{q\xi_c}$.
Choosing $q$ sufficiently large so that $\frac{1}{z_N} \log q\geq U_{\max}-U_{\min}$,
we deduce the lower bound $\psi_M \geq -M/z_N$ in (\ref{Linfty2}).

An analogous calculation gives the upper bound in (\ref{Linfty2}) and the Proposition is proved.
\end{proof}

{   As a conclusion of our three steps of the proof, we can
state the final result which is Theorem \ref{thm.exist3}, stated in
the simplified notations of this section.}

\begin{theorem}
\label{thm.main}
Let $\Sigma^* \in L^\infty (\p \Omega_p)$.
Under assumption (\ref{Bound1}) and for small enough
$\xi_c \in (0, \xi^{\rm cr})$, there exists a
solution of the Poisson-Boltzmann problem (\ref{BP12}),
$\Psi \in H^1(\Omega_p)\cap L^\infty(\Omega_p)$.
In particular, $n_j$ satisfies a uniform lower bound
$n_j(x) \geq C>0$ in $\Omega_p$.
\end{theorem}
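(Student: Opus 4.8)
The plan is to harvest the three ingredients already prepared in this section. First, recall from the energy-minimization argument preceding Proposition \ref{Linft} that, for $\Sigma^*\in L^\infty(\p\Omega_p)$ and with $M=\log(1/\xi_c)$ taken large enough, the cut-off problem (\ref{BP12cutoff}) has at least one solution $\Psi_M$ which, by elliptic regularity on the smooth domain $\Omega_p$, lies in $W^{2,q}(\Omega_p)$ for every finite $q$, hence in particular in $H^1(\Omega_p)\cap L^\infty(\Omega_p)$.

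Second, I would apply Proposition \ref{Linft}: provided $\xi_c\in(0,\xi^{\rm cr})$, the bound $-M/z_N\le\Psi_M(x)\le M/|z_1|$ holds a.e. in $\Omega_p$. The truncation operator $T_M$ is, by construction, the identity on the interval $[-M/z_N,\,M/|z_1|]$, so $T_M(\Psi_M)=\Psi_M$ and, by the definition (\ref{cutoof}) of the cut-off nonlinearity, $\Phi_M^\prime(\Psi_M)=\Phi^\prime\circ T_M(\Psi_M)=\Phi^\prime(\Psi_M)$. Hence $\Psi_M$ actually solves the genuine Poisson-Boltzmann problem (\ref{BP12}), and $\Psi:=\Psi_M\in H^1(\Omega_p)\cap L^\infty(\Omega_p)$ is the asserted solution.

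Third, for the uniform lower bound on the $n_j$, I would note that $\|\Psi\|_{L^\infty(\Omega_p)}\le M/\min(|z_1|,z_N)$ together with the elementary estimate $g(\Psi)\le N\max_j\!\big(n_j^0(\infty)\gamma_j^0(\infty)\big)e^M$ (each $e^{-z_j\Psi}$ being bounded by $e^M$ at the relevant endpoint of the admissible interval, using the sign structure of the $z_j$) and Lemma \ref{propertiesxi} gives $\xi\,e^{p(\xi)}\le\mathcal{A}_{\max}\,g(\Psi)$, which stays $O(1)$ as $\xi_c\to0$ since $\mathcal{A}_{\max}=O(\xi_c)$ cancels the $e^M=1/\xi_c$ growth of $g$. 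Consequently $\tilde\xi(\Psi)$ is bounded away from $1$ uniformly in small $\xi_c$, so $\gamma^{HS}=e^{p(\tilde\xi(\Psi))}$ is bounded above, and then (\ref{GammaL2}) yields an a priori upper bound on $\Gamma(\Psi)$ as well (its right-hand side has exponent bounded above, while the denominators $(1+\Gamma\Gamma_c\sigma_j)^2$ only help). Inserting these bounds into formula (\ref{NJ0HS}), each factor of $n_j$ is bounded below by a positive constant: $n_j^0(\infty)>0$, $\gamma_j^0(\infty)>0$ (and $O(1)$ by Remark \ref{rem.xic}), $1/\gamma^{HS}$ is bounded below, $e^{-z_j\Psi}\ge e^{-|z_j|\|\Psi\|_{L^\infty}}>0$, and $\exp\{L_B\Gamma\Gamma_c z_j^2/(1+\Gamma\Gamma_c\sigma_j)\}\ge1$; hence $n_j(x)\ge C>0$ in $\Omega_p$, which completes the proof.

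\textbf{Main obstacle.} The genuinely delicate part — the maximum principle of Proposition \ref{Linft} with the specific choice $M=\log(1/\xi_c)$ — is already done, so what remains is essentially bookkeeping. The one point that deserves care is the compatibility of this value of $M$ with keeping $\mathcal{A}_{\max}g(\Psi)$, and therefore the packing fraction $\tilde\xi(\Psi)$ and the hard-sphere factor $\gamma^{HS}$, controlled uniformly in $\xi_c$: this is exactly the balance between the exponential growth of the nonlinearity and the smallness of the characteristic packing fraction that dictated the choice of $M$ in the first place.
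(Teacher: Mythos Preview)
Your proof is correct and follows exactly the paper's approach: invoke the cut-off solution $\Psi_M$, apply Proposition \ref{Linft} to get $T_M(\Psi_M)=\Psi_M$, and conclude that $\Psi_M$ solves the genuine problem (\ref{BP12}). The paper's own proof is in fact just this one sentence and does not spell out the $n_j$ lower bound at all; your third step supplies that missing detail, and the key observation --- that $\mathcal{A}_{\max}g(\Psi)=O(1)$ because the $\xi_c$ in $\mathcal{A}_{\max}$ exactly cancels the $e^M=1/\xi_c$ growth of $g$ on the admissible interval, so $\tilde\xi$ stays bounded away from $1$ and $\gamma^{HS}$ stays bounded above --- is precisely the right one.
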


\begin{proof}
Proposition \ref{Linft} implies that $T_M(\Psi_M)=\Psi_M$,
so $\Phi_M^\prime (\Psi_M)= \Phi^\prime (\Psi_M)$, which proves that
$\Psi_M$ solves the original Poisson-Boltzmann problem (\ref{BP12}).
\end{proof}

\begin{remark}
Note that the assumptions (\ref{Bound1}) and $\xi_c$ small enough
are completely independent of the scaling of the domain $\Omega_p$
and thus of $\ep$. Therefore, Theorem \ref{thm.main} applies
uniformly with respect to $\ep$ in the porous medium $\Omega_\ep$,
as stated in Theorem \ref{thm.exist3}.
\end{remark}

\begin{remark}
Of course, further regularity of $\Psi$ can be obtained by standard elliptic regularity
in (\ref{BP12}). For example, assuming
$\Sigma^* \in C^\infty (\partial\Omega_p)$, the right hand side of equation (\ref{BP12}) is
bounded and using the smoothness of the geometry, we conclude that
$\Psi \in W^{2,q} (\Omega_p)$ for every $q<+\infty$. By bootstrapping,
we obtain that $\Psi \in C^\infty (\overline \Omega_p)$.
\end{remark}

\section{Numerical results}
\label{secnum}

We perform two-dimensional numerical computations with the {\bf FreeFem++} package \cite{FreeFM}.
The goal of this section is to compute the effective coefficients constituting
the Onsager homogenized tensor (\ref{Onsager}), to study their variations in terms
of some physical parameters (concentration, pore size and porosity) and to
make comparisons with the ideal case studied in \cite{ABDMP} in a realistic model of porous media.
We use the same unit cell geometries and complete the same test cases as in \cite{ABDMP}. It corresponds
to a simple model of geological montmorillonite clays.

The linearization of the electrokinetic equations (see Section \ref{sec4}) allows us
to decouple the computation of the electrostatic potential from those of the cell problems.

In a first step, we compute the solution $\Psi^0$ of the nonlinear Poisson-Boltzmann equation \eqref{BP122} with the associated
hard sphere term $\gamma^{HS}$ and MSA screening parameter $\Gamma$, from which we infer
the activity coefficients $\gamma^0_j$ and the concentrations $n^0_j$.

Second, knowing the $n_j^0$'s, and thus the MSA screening parameter $\Gamma$,
we compute the hydrodynamic interaction terms $\Omega_{ij}$ (\ref{Omegac})-(\ref{OmegaHS}) and the electrostatic relaxation terms $\mathcal{R}_{ij}$ (\ref{Relax}).
In turn it yields the value of the tensor $K_{ij}$ given by (\ref{lin2}).
The concentrations $n_j^0$ and the tensor $K_{ij}$ play the role of coefficients in
the cell problems \eqref{StokesAux0}-\eqref{bcAux0} and \eqref{divAuxi}-\eqref{bcAuxi}.
Thus, we can now compute their solutions which are used to
evaluate the various entries of the effective tensor \eqref{Onsager} according to the formula from Proposition \ref{prop.eff}.
In all figures we plot the adimensionalized entries of the effective tensors \eqref{Onsager}.
However, when the concentrations are involved, we plot them in their physical units, namely we use the dimensional quantity
\begin{equation}
\label{def.conc}
n_j^*(\infty) = n_c \, n_j^0(\infty) .
\end{equation}
For large pores (compared to the Debye length) the electrostatic potential is varying
as a boundary layer close to the solid boundaries. In such a case, the mesh is refined
close to those boundaries (see e.g. Figure \ref{fig.ellipsoid}).
The total number of degrees of freedom is around 18000 (depending on the infinite dilution concentration $n_j^*(\infty)$).

The nonlinear Poisson-Boltzmann equation \eqref{BP122} is solved with
Lagrange P2 finite elements and a combination of a Newton-Raphson algorithm
and a double fixed point algorithm. The Newton-Raphson algorithm is used to
solve the Poisson-Boltzmann equation at fixed values of the MSA coefficients
$\gamma^{HS}$ and $\Gamma$. The double fixed point algorithm is performed
on these values of $\gamma^{HS}$ and $\Gamma$. It starts with the initial values
$\gamma^{HS}= 1$ and $\Gamma = 0$ which correspond to the ideal case.

Let $n=1,2,...,n_\fin$ be the iteration number of the first level of the fixed point algorithm (the outer loop)
which update the electrokinetic potential from the previous value $\Psi^{(n-1)}$
to the new value $\Psi^{(n)}$, keeping $\gamma^{HS}_{(n-1)}$ fixed.
We first solve the Poisson-Boltzmann equation with these initial values and
a MSA screening parameter initialized to $\Gamma^{(n-1)}_{(0)}$. Let us note
$\Gamma^{(n-1)}_{(k-1)}$ the generic term at iteration $k$. Here, the iteration number
$k=1,2,...,k_\fin$ refers to the second level (inner loop) of the double fixed point algorithm.
It yields the electrokinetic potential $\Psi^{(n-1)}_{(k-1)}$ and, through
(\ref{Gammaeps0}), the new $\Gamma^{(n-1)}_{(k)}$ value which allows us to
iterate in $k$. The inner iterations are stopped when the wished accuracy is reached
at $k=k_\fin$.

From this new electrokinetic potential $\Psi^{(n-1)}_{(k_\fin)}$, we determine
the species concentrations and, through (\ref{Hardsphere}), the solute
packing fraction $\xi^{(n-1)}$. At this stage, a new  hard sphere term
$\gamma^{HS}_{(n)}$ is defined and we start a new iteration of the outer loop.
The outer loop is broken when the wished accuracy is reached at $n=n_\fin$.

\begin{figure}[!t]
\centering
    \includegraphics[width=0.5\textwidth]{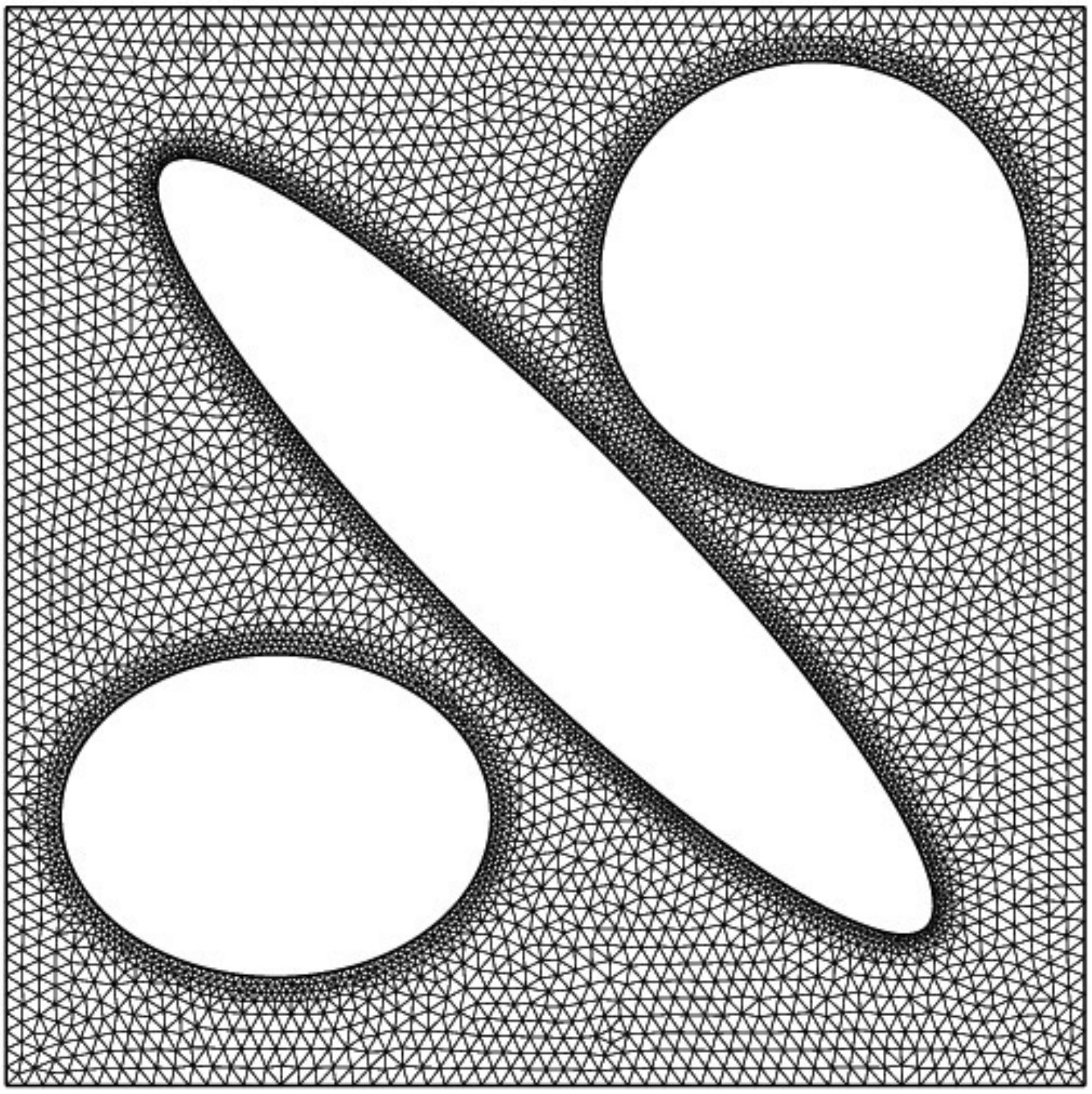}
   \caption{{\em Mesh for a periodicity cell with ellipsoidal inclusions (porosity is equal to $0.62$)}}
\label{fig.ellipsoid}
\end{figure}

All the following computations are ran for an aqueous solution of NaCl at $298$~K (Kelvin), where species $j=1$
is the cation Na$^{+}$ ($z_1=1$) with diffusivity $D_{1}^0  = 13.33 $e${-10}  \, $m$^2$/s and species $j=2$ the anion Cl$^{-}$ ($z_2=-1$) with $D_{2}^0 = 20.32 $e${-10}  \, $m$^2$/s
(note that this is the opposite convention of the previous sections where $z_1<0<z_2$).
The hard sphere diameters of the two species are considered equal to $3.3e-10 \, m$.
This model of NaCl electrolyte solution is able to reproduce both the equilibrium (activity coefficients, osmotic pressure) and the transport coefficients (conductivity, Hittorf transference number \cite{RS70}, self and mutual diffusion coefficient of the electrolyte) up to molar concentrations.
 The infinite dilution concentrations of the species are considered equal, $n_1^0(\infty) = n_2^0(\infty)$, and the characteristic concentration is $n_c=0.1 mole/l$.

The dynamic viscosity $\eta$ is equal to $0.89$e$-3\, $kg/(m$\,$s).
Instead of using the formula of Table \ref{Data} for defining the Debye length,  we use the following definition (as in the introduction)
$$
\lambda_D= \sqrt{\frac{\mathcal{E} k_B T}{ e^2 \sum_{j=1}^N n_j z_j^2 }} ,
$$
which differs by a factor of $\sqrt2$ in the present case of two monovalent ions.
Other physical values are to be found in Table \ref{Data}.
Following \cite{ABDMP} two model geometries are considered.
The first one features ellipsoid solid inclusions (see Figure \ref{fig.ellipsoid}),
for which we perform variations of concentrations from $10^{-3}$ to  1$\,$mol/l
and variations of the pore size ($3 \leq \ell \leq 50\,$nm).
The second one is a rectangular model (see Figure \ref{fig.rectangle}) which allows us to perform porosity variation.

\begin{figure}[!t]
\centering
    \includegraphics[width=0.3\textwidth]{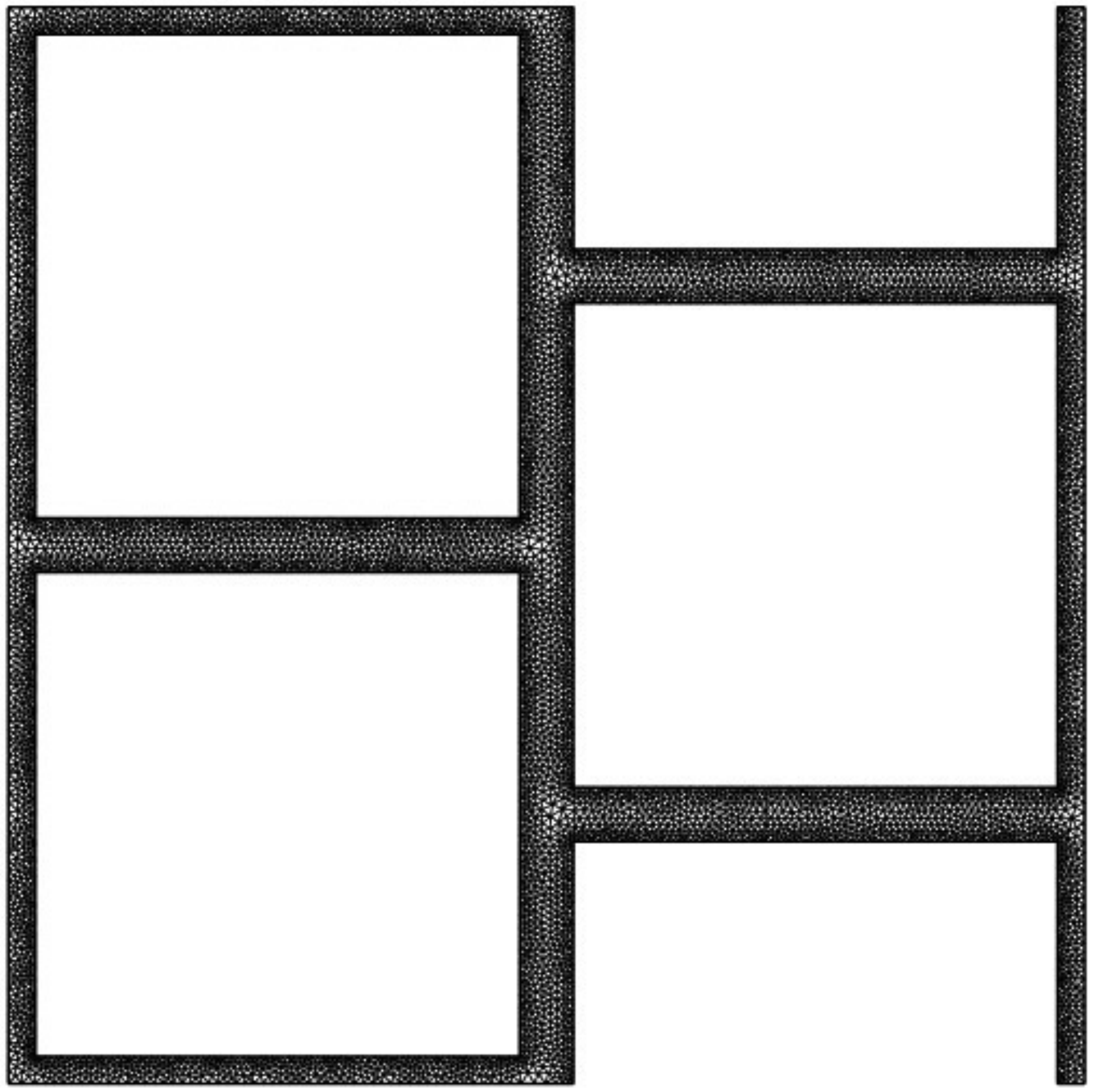}
    \includegraphics[width=0.3\textwidth]{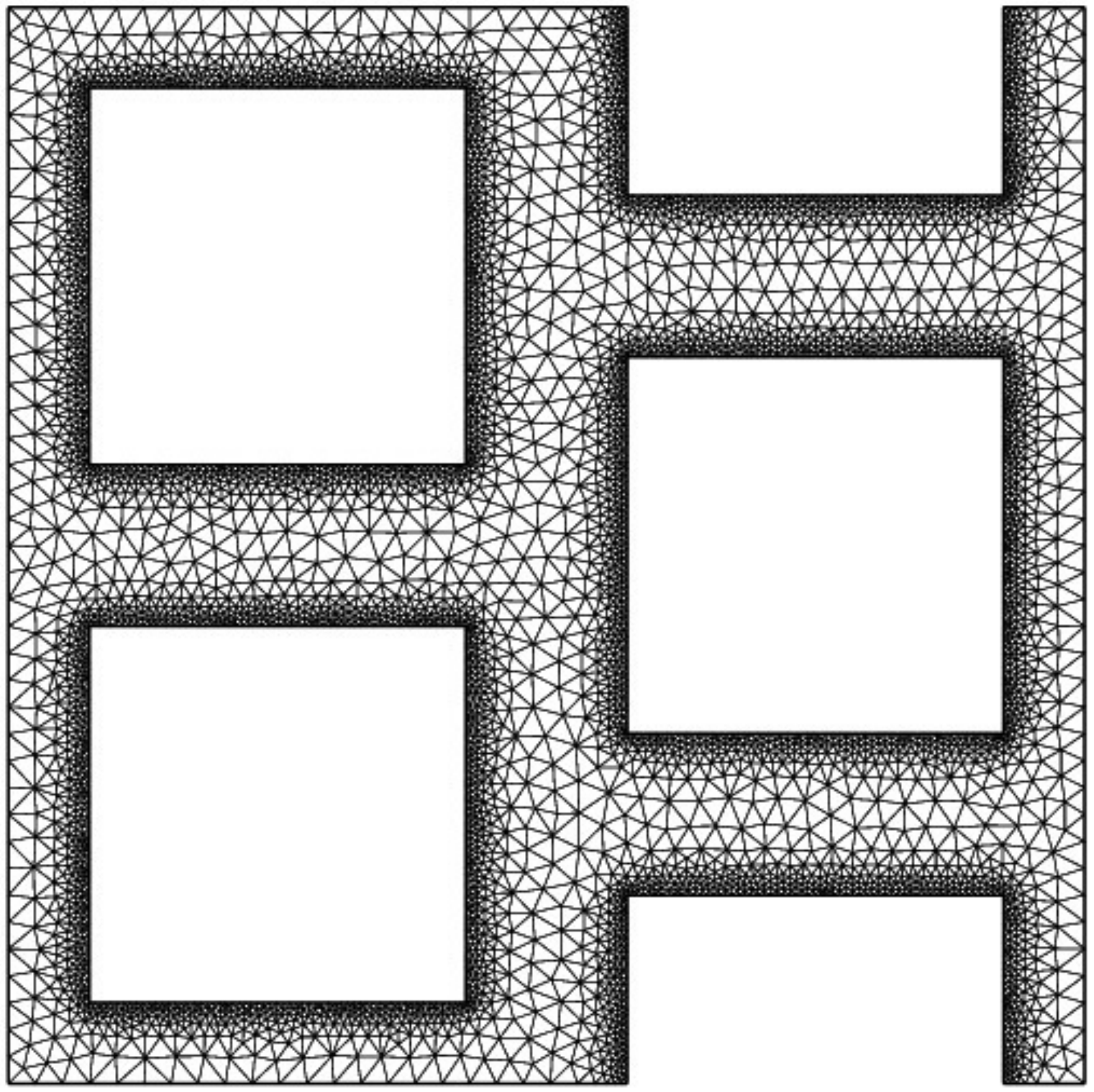}
    \includegraphics[width=0.3\textwidth]{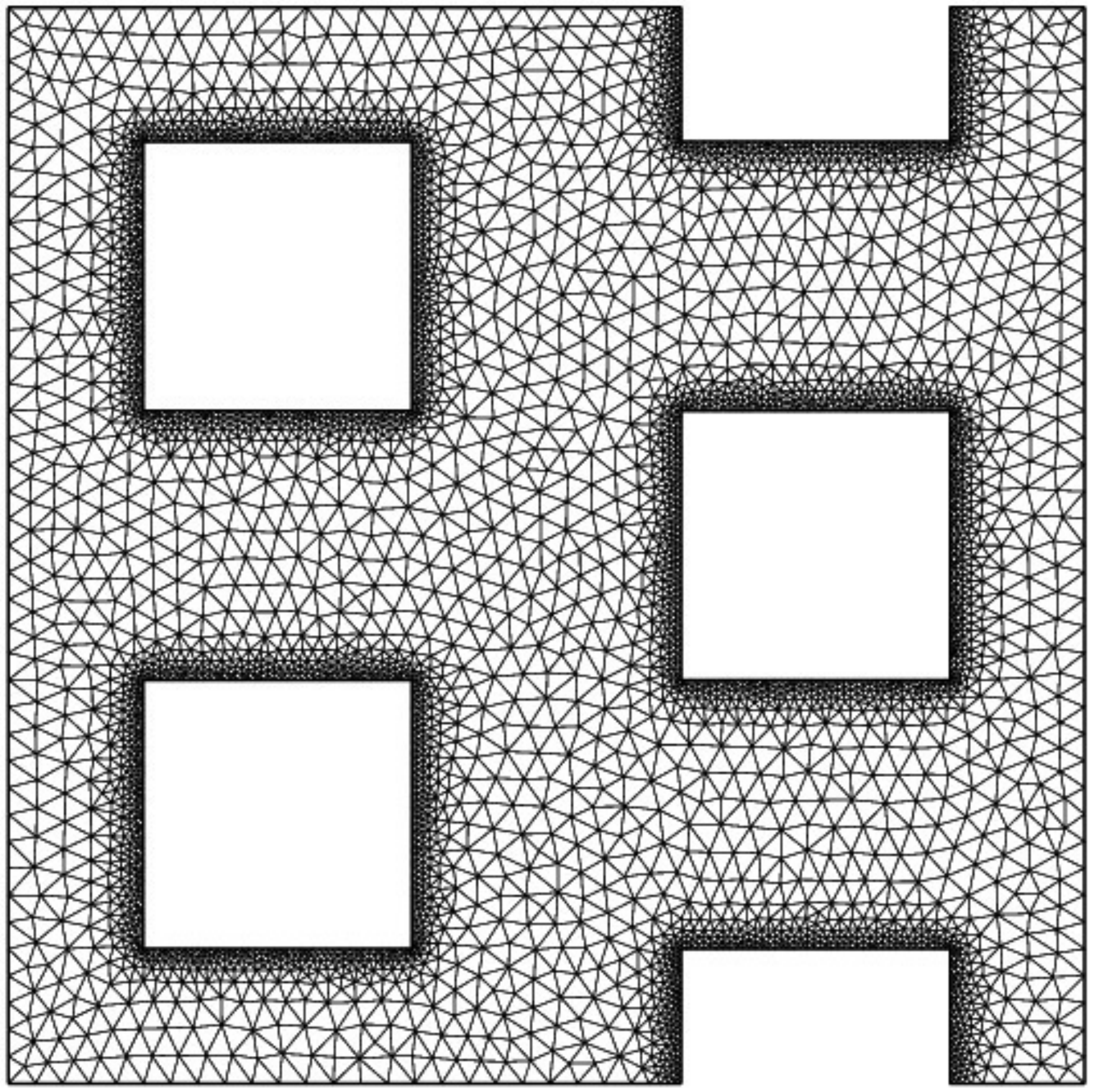}
   \caption{{\em Meshes for three different porosities ($0.19$, $0.51$ and $0.75$) of a periodic cell with rectangular inclusions}}
\label{fig.rectangle}
\end{figure}

\subsection{Variation of the concentration}

Here we consider the geometry with ellipsoidal inclusions (Figure \ref{fig.ellipsoid}).
We vary the infinite dilution concentrations $n_j^0(\infty)$ in the range $(10^{-2}, 10)$
or, equivalently through \eqref{def.conc}, the dimensional infinite dilution concentrations
$n_j^*(\infty)$ varies from $10^{-3}$ to 1$\,$mol/l. The pore size is $\ell$=50$\,$nm.
Varying proportionally all values of $n_j^0(\infty)$ is equivalent to varying
the parameter $\beta$ in the Poisson-Boltzmann equation \eqref{BP122}.

As can be checked on Figure \ref{fig.cmoy}, except for very small concentrations,
the cell-average of the concentrations $|Y_F|^{-1}\int_{Y_F} n_j(y)\,dy$ is almost
equal to the infinite dilution concentrations $n_j^0(\infty)$. This is clear in
the ideal case, but in the MSA case the cell-average of the concentrations is
slightly smaller than the infinite dilution concentrations for large concentrations.
It is a manifestation of the packing effect which forbids the boundary layer to
be too thin in the MSA setting. The behavior of Figure \ref{fig.cmoy} (bottom)
which represents the Donnan effect
was expected.  For small dilutions the MSA concentration is higher than the ideal
one because the electrolyte is in the attractive electrostatic regime so that there is a tendancy of
incorporating anions. It is the opposite for large dilutions : the electrolyte is in the repulsive
hard sphere regime and the excluded volumes expel the anions.

\begin{figure}[!t]
\centering
    \includegraphics[width=0.6\textwidth,angle=270]{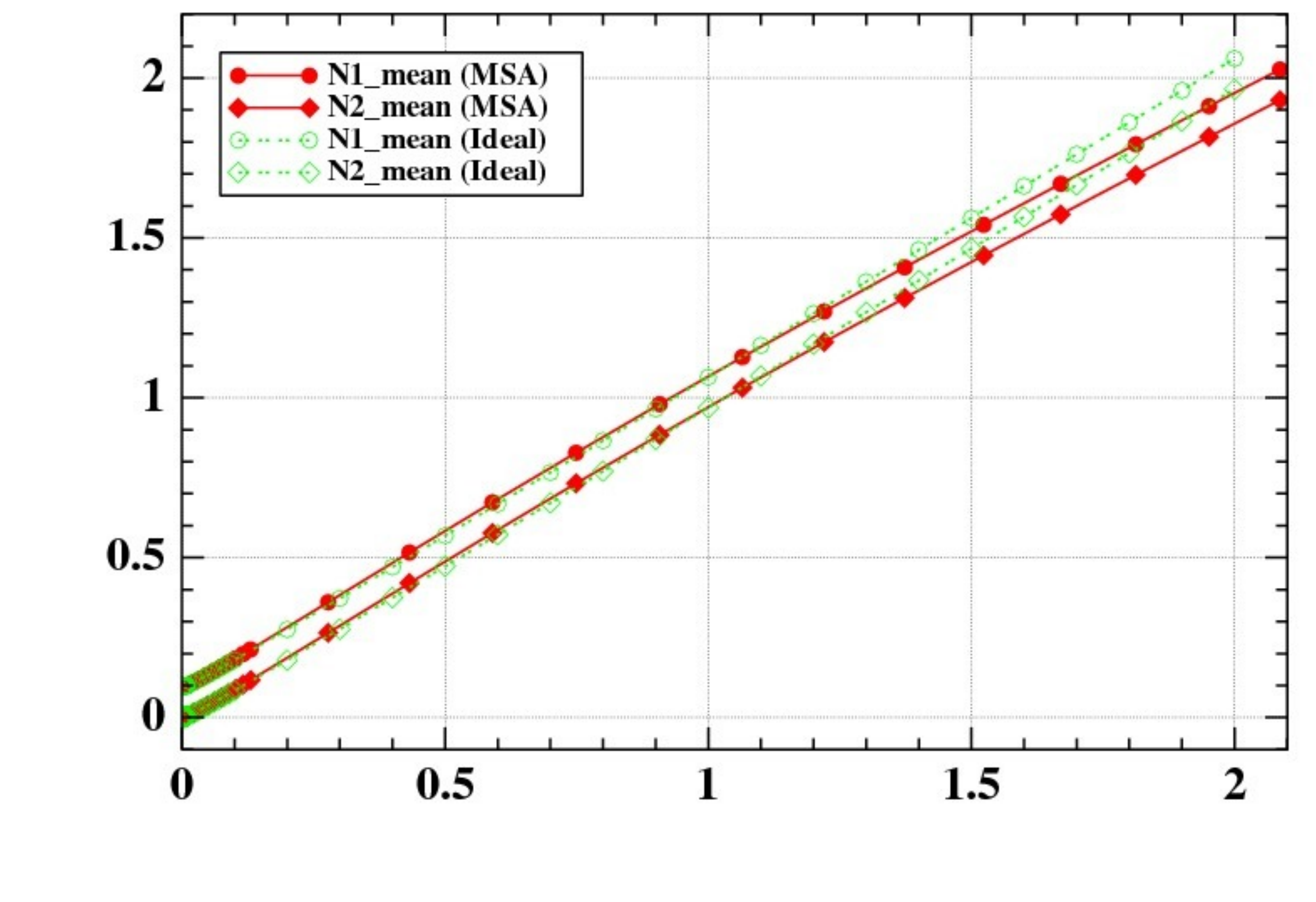}
    \includegraphics[width=0.8\textwidth]{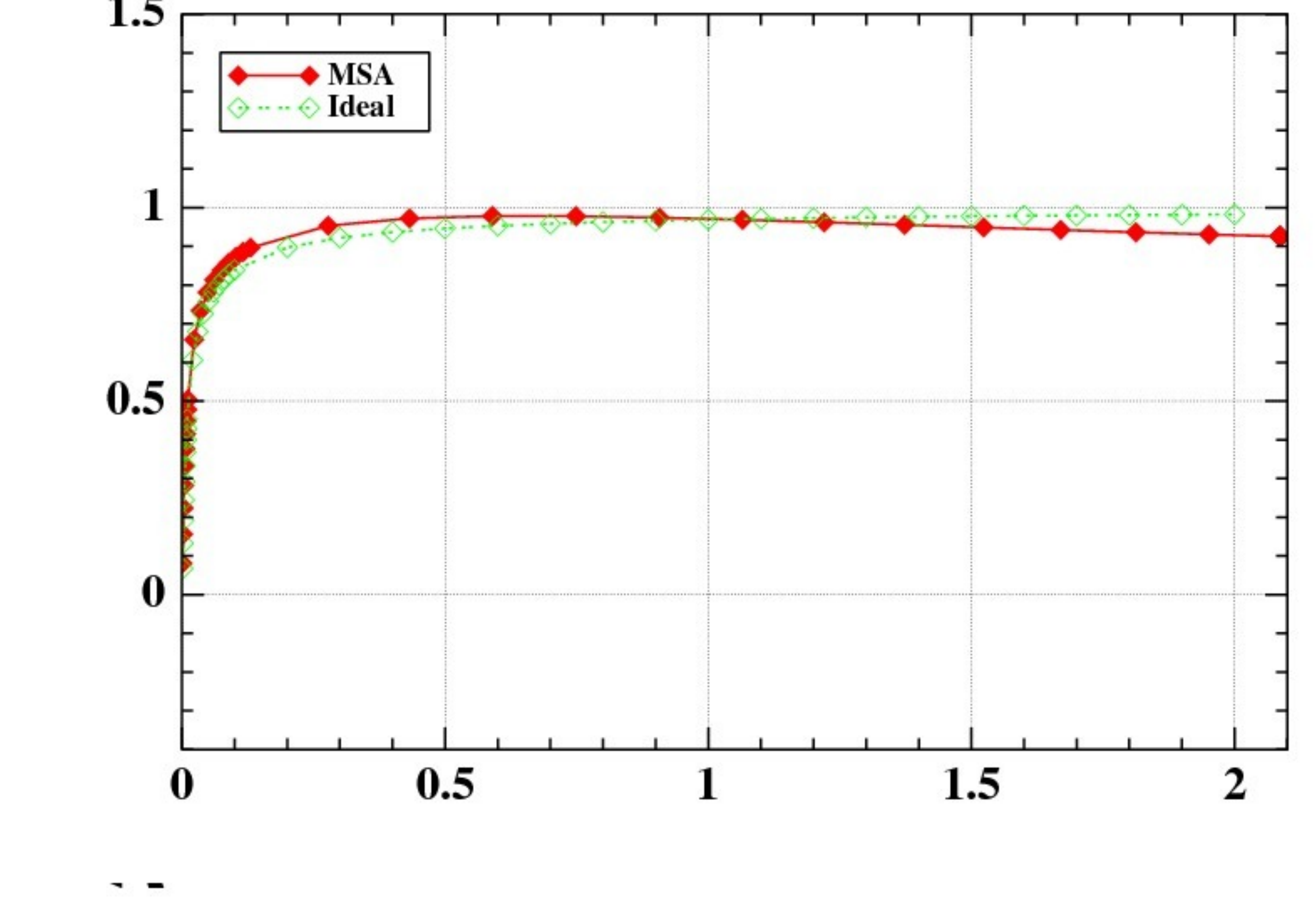}
   \caption{{\em Averaged cell concentrations $\textrm{Nj\_mean}=|Y_F|^{-1}\int_{Y_F} n_j(y)\,dy$ (top) and rescaled averaged cell anion concentration $\textrm{N2\_mean}/n_2^0(\infty)$ (bottom)
as a function of the dimensional (mol/l) infinite dilution concentrations $n_j^*(\infty)$}}
\label{fig.cmoy}
\end{figure}

Since the permeability tensor $\mathbb{K}$ depends on the pore size $\ell$,
we renormalize its entries by dividing them by the corresponding
ones for a pure filtration problem (computed through the usual Stokes
cell problems \cite{hornung}).
The resulting relative permeability coefficients
are plotted on Figure \ref{fig.perm}: the smaller the infinite dilution concentration,
the smaller the permeability. We clearly see an asymptotic limit of the relative
permeability tensor not only for high concentrations
but also for low concentrations. In the latter regime, the hydrodynamic
flux is reduced: the electrostatic attraction of the counterions with respect to the surface slows down the fluid motion. This effect is not negligible
because the Debye layer is important.
The MSA model differs from the ideal case. The curve is qualitatively the same but the electrostatic reduction of the Darcy flow is more important. Non ideality diminishes the mobility of the counterions at the vicinity of the surface so that the electrostatic interactions in the double layers are more pronounced.

\begin{figure}[!t]
\centering
 \includegraphics[width=0.7\textwidth]{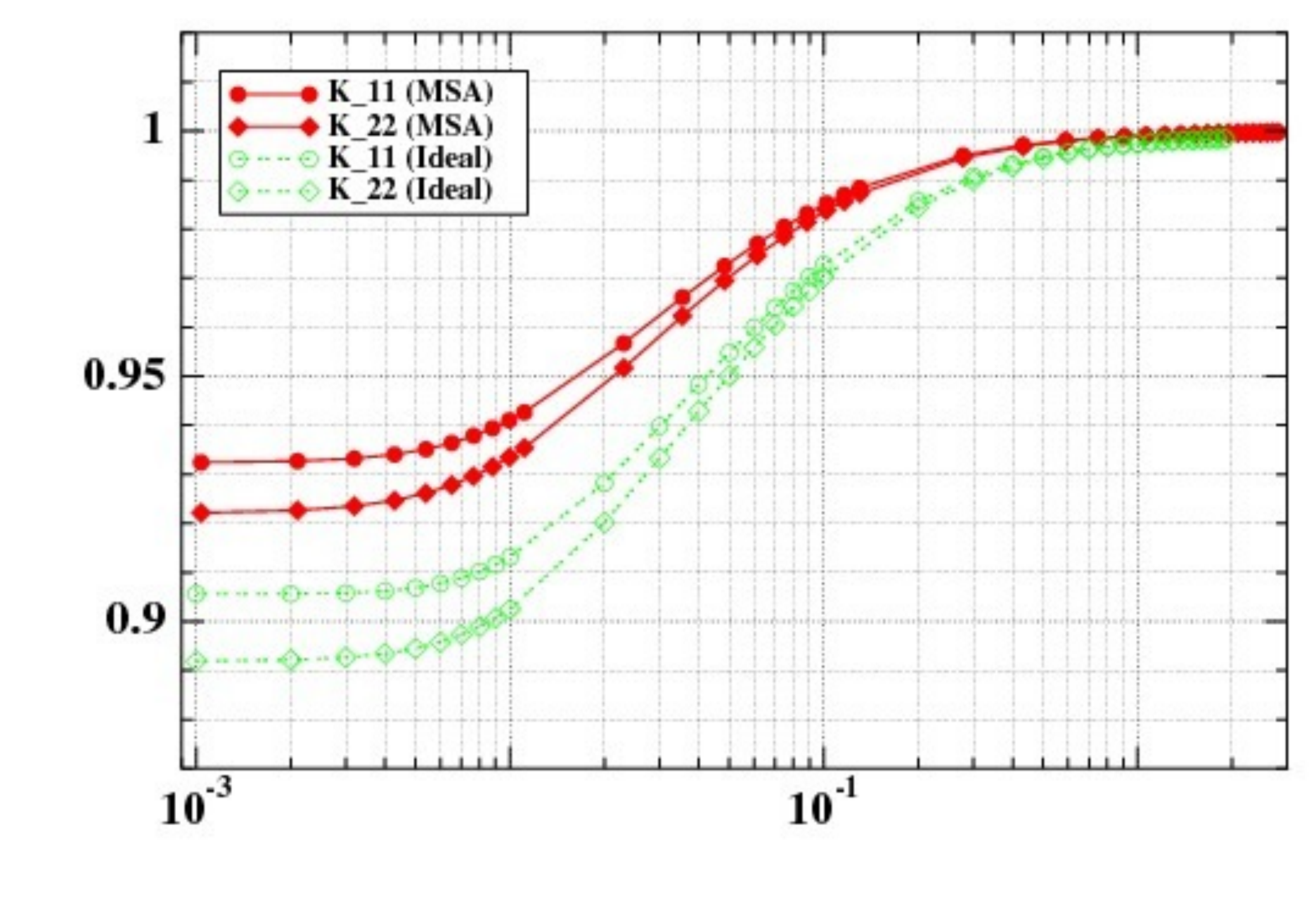}
   \caption{{\em Diagonal entries of the relative permeability tensor, $\mathbb{K}_{11}$ and $\mathbb{K}_{22}$, as functions of the dimensional (mol/l) infinite dilution concentrations $n_j^*(\infty)$}}
\label{fig.perm}
\end{figure}

The entries of the electrodiffusion tensor $\mathbb{D}_{11}$ for the cation
are plotted on Figure \ref{fig.diff11}.
A similar behavior is obtained for the other tensor $\mathbb{D}_{22}$ for the anion.
As expected the flux increases with the infinite dilution concentration $n_j^*(\infty)$.
It is not a linear law because even at low concentration there are still counterions ; they do not appear to be very mobile,
though.
The cross-diffusion tensor $\mathbb{D}_{12}$ is displayed on Figure \ref{fig.diff12}:
for large concentrations it is of the same order of magnitude than the species
diffusion tensors $\mathbb{D}_{11}$ and $\mathbb{D}_{22}$, because of the strong electrostatic interactions
between the ions.
In all cases, the MSA model is close to the ideal one: it is only for
large concentrations that the values of the electrodiffusion tensor are
different, and smaller, for MSA compared to ideal.
{  There are probably compensating effects :} same charge correlations increase diffusion but this effect is somewhat counterbalanced by opposite charge correlations that slow down the diffusion process. Non-ideal effects could be more important in the case of further quantities such as the electric conductivity for which cross effects are additive.

The log-log plot of Figure \ref{fig.d11asympt} (where the slope of the curve
is approximately 2) shows that the electrodiffusion tensors $\mathbb{D}_{ji}$
behaves quadratically as a function of $n_j^*(\infty)$ when $n_j^*(\infty)$
becomes large. This asymptotic analysis can be made rigorous in the ideal case.
At low salt concentration, correlation effects (\textit{i.e.} non-ideality) enhance slightly diffusion. In this regime, there are no counterions. So the relaxation effect is purely repulsive and diffusion is enhanced \cite{diffPRL}. At high concentration, the co-ion concentration is not negligible and there is a classical electrostatic relaxation friction.

\begin{figure}[!t]
\centering
    \includegraphics[width=0.7\textwidth]{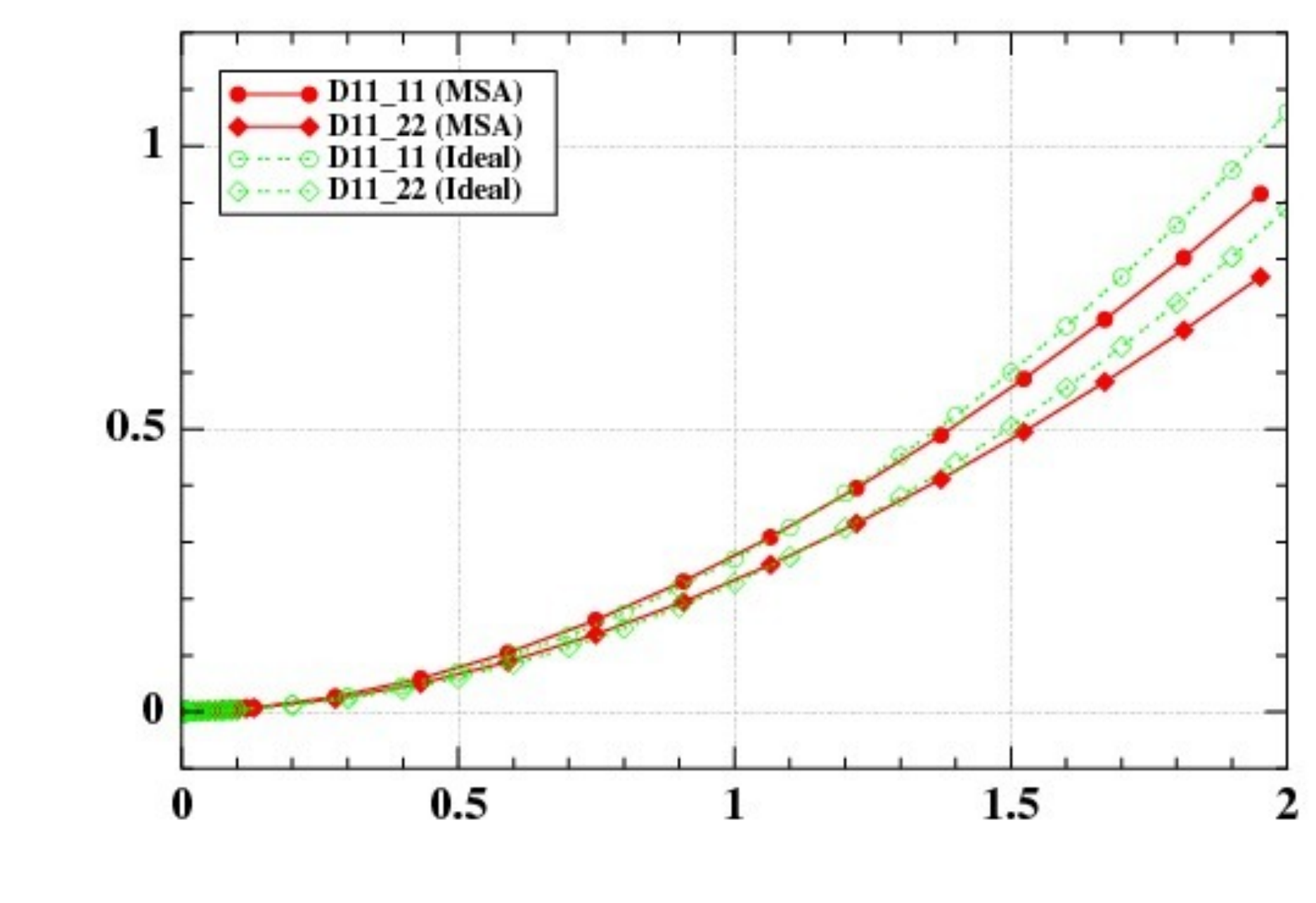}
   \caption{{\em Entries of the electrodiffusion tensor $\mathbb{D}_{11}$ for the cation,
as functions of the dimensional (mol/l) infinite dilution concentrations $n_j^*(\infty)$}}
\label{fig.diff11}
\end{figure}

\begin{figure}[!t]
\centering
    \includegraphics[width=0.7\textwidth]{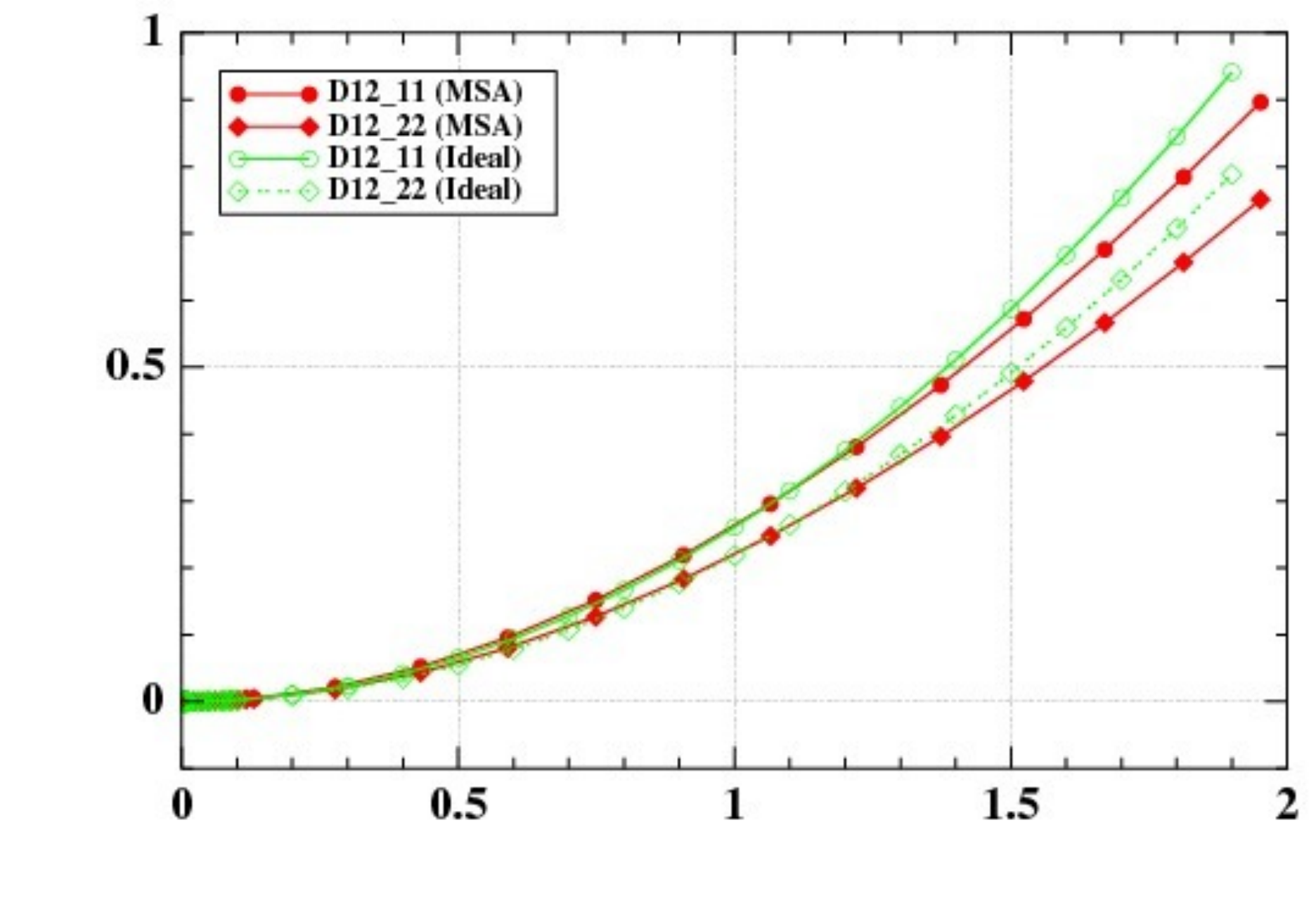}
   \caption{{\em Diagonal entries of the cross-diffusion tensor $\mathbb{D}_{12}$,
as functions of the dimensional (mol/l) infinite dilution concentrations $n_j^*(\infty)$}}
\label{fig.diff12}
\end{figure}

\begin{figure}[!t]
\centering
   \includegraphics[width=0.7\textwidth]{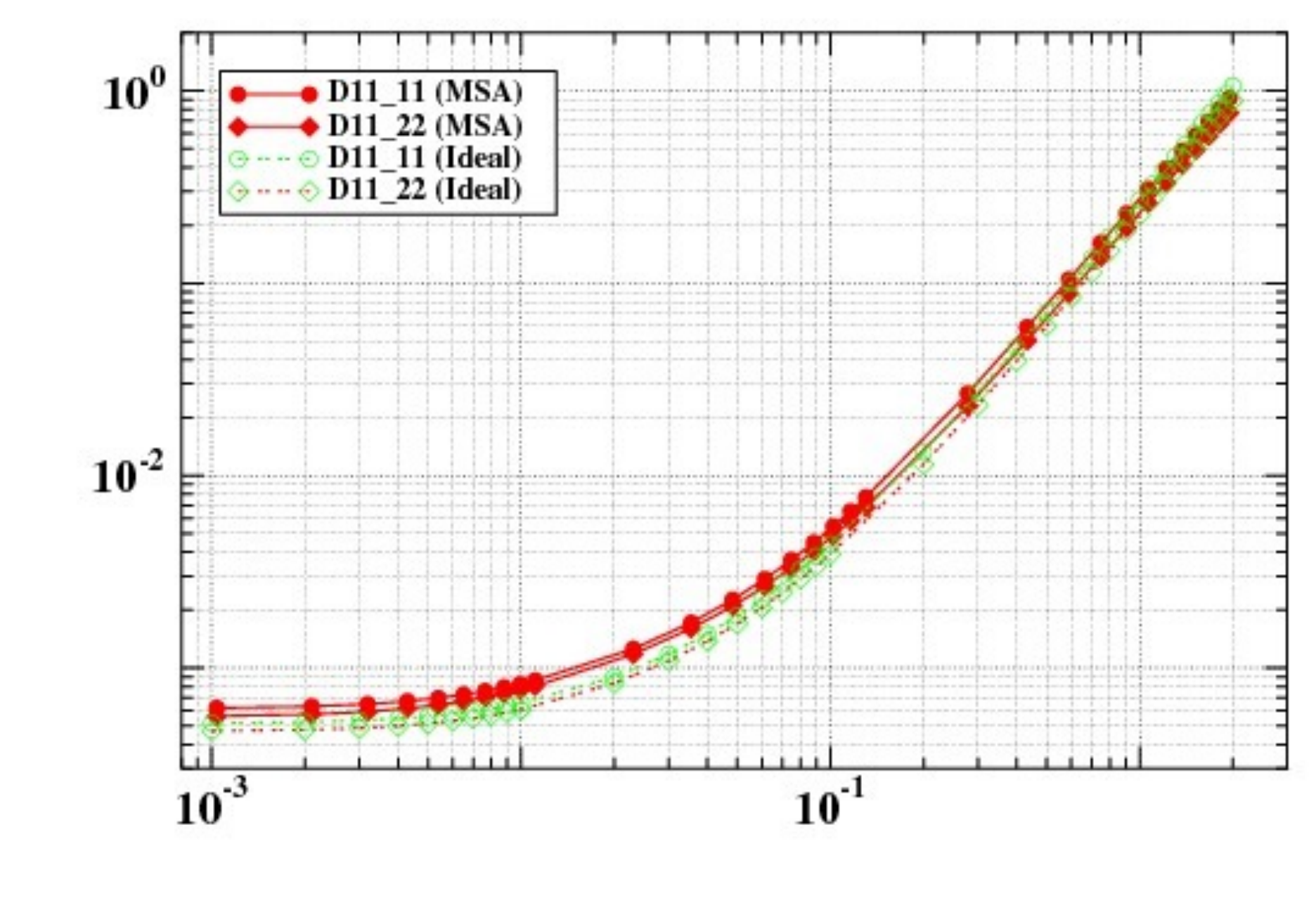}
   \caption{{\em Diagonal entries of the electrodiffusion tensor $\mathbb{D}_{11}$
as functions of the dimensional (mol/l) infinite dilution concentrations $n_j^*(\infty)$ (log-log plot)}}
\label{fig.d11asympt}
\end{figure}

The coupling tensors $\mathbb{L}_{1}$ and $\mathbb{L}_{2}$ are plotted on
Figure \ref{fig.l1l2}. The coupling is, of course, maximal for large concentrations
but the coupling tensor $\mathbb{L}_{1}$ for the cation does not vanish for very
small infinite dilution concentrations since the cell-average of the cation concentration
has a non-zero limit (required to compensate the negative surface charge) as
can be checked on Figure \ref{fig.cmoy}. The differences between the ideal and
MSA models are very limited in this logarithmic plot.

\begin{figure}[!t]
\centering
    \includegraphics[width=0.7\textwidth]{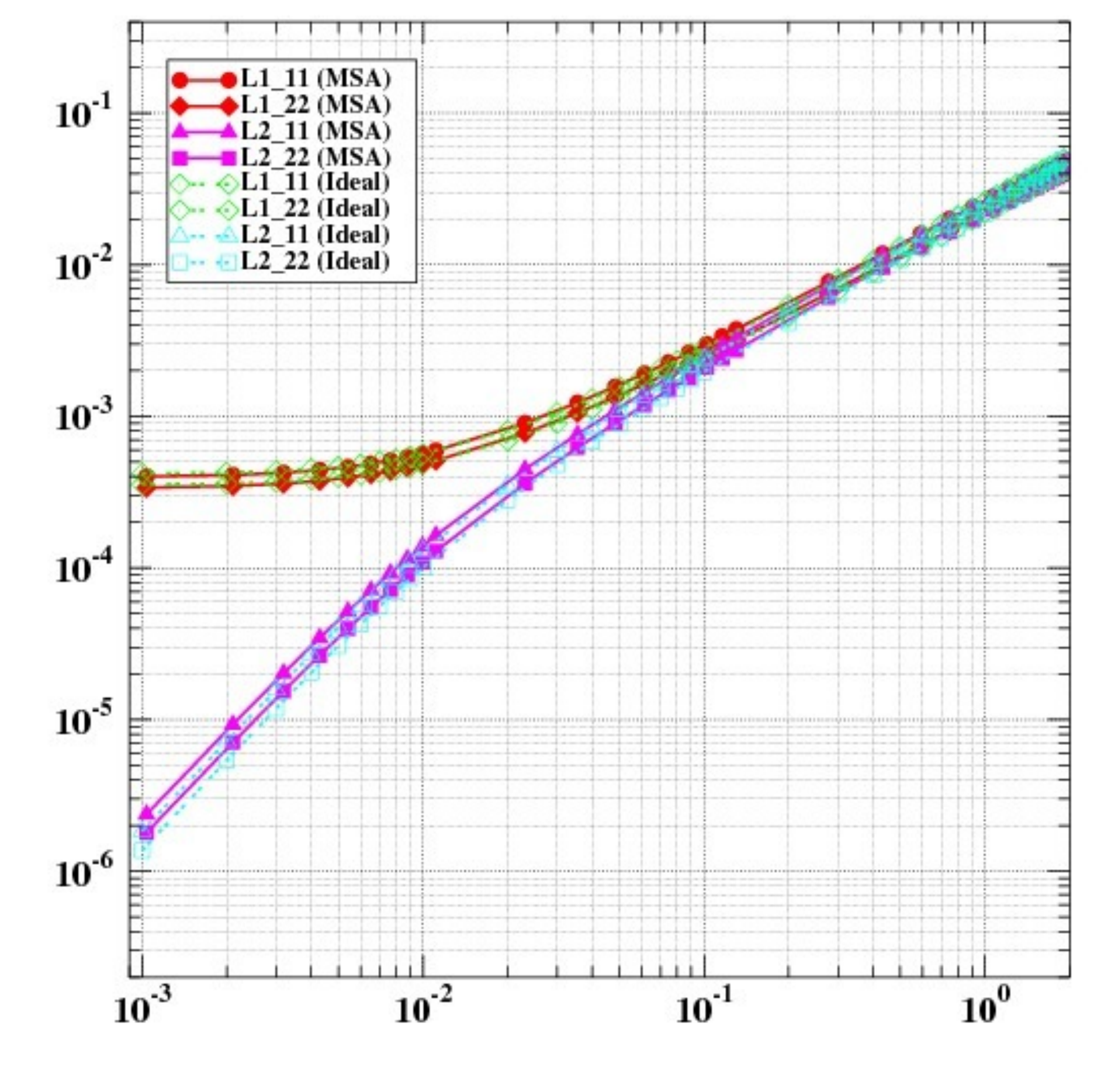}
   \caption{{\em Diagonal entries of the coupling tensors $\mathbb{L}_{1}$ and $\mathbb{L}_{2}$,
as functions of the dimensional (mol/l) infinite dilution concentrations $n_j^*(\infty)$ (log-log plot)}}
\label{fig.l1l2}
\end{figure}

\subsection{Variation of pore size }

We keep the same geometry with ellipsoidal inclusions (Figure \ref{fig.ellipsoid})
but we now vary the pore size $\ell$, which is equivalent to vary the parameter $\beta$,
defined by \eqref{def.beta}, in the Poisson-Boltzmann equation \eqref{BP122}.
It thus changes the values of the concentrations $n_j^0(y)$ which play the role
of coefficients in the cell problems \eqref{StokesAux0}-\eqref{bcAux0}
and \eqref{StokesAuxi}-\eqref{bcAuxi}. This is the only modification
which is brought into the cell problems.
{  We emphasize that varying the pore size does not change the geometry
of the unit cell, but simply changes the coefficients of the cell problems.}

The dimensional infinite dilution concentration $n_1^*(\infty)=n_2^*(\infty)$ is
$10^{-1}$ mol/l which yields a value $0.7678$ for the infinite dilute
activity coefficients $\gamma^1_0(\infty)=\gamma^2_0(\infty)$.

On Figure \ref{fig.conc-pore} we plot the cell-average of the concentrations
$|Y_F|^{-1}\int_{Y_F} n_j(y)\,dy$ as functions of the pore size $\ell$. Qualitatively, there
is a close agreement between the ideal and MSA cases, as can be checked on this logarithmic plot.
Yet, the departure from ideality is not negligible. For small pore size the
Donnan effect, which corresponds to the anion concentration, is typically 40 \% higher
than its value in the ideal case.
When the pore size goes
to infinity the averaged concentrations should converge to the infinite dilution
concentrations.

On Figure \ref{fig.perm-pore} we plot the relative permeability coefficients
with respect to the ones of the Stokes problem.
As was already observed in \cite{ABDMP}, the variation is not monotone and
there is a minimum for a pore size of roughly 20 nanometers. This effect is
less pronounced for the MSA model but the location of the $\ell$ value
where the minimum is attained is not affected.
This is the signature of a transition from a bulk diffusion regime for small pores to a surface diffusion regime (caused by large boundaries) at large pores. Globally, the counterions reduce the hydrodynamic flow because of the attraction with the surface, but this relaxation effect is less important at very large or very small pore size $\ell$. More precisely, if the pore size becomes very large, the electrostatic screening is important, as already mentioned. Thus the domain of attraction becomes very small and the lowering of the hydrodynamic flow is reduced: the permeability is increased. On the other hand, for very small pores, the counterion profile becomes more and more uniform. Consequently, there is no screening, but the hydrodynamic flow does not modify a lot the counterion distribution, since it is globally uniform and the resulting electrostatic slowdown becomes less important. The departures from ideality modelled by the MSA globally reduce the total variation of the permeability tensor because the mobility of the ions in the Debye layer is weaker and their dynamics influence less the Darcy flow.

\begin{figure}[!t]
\centering
    \includegraphics[width=0.7\textwidth]{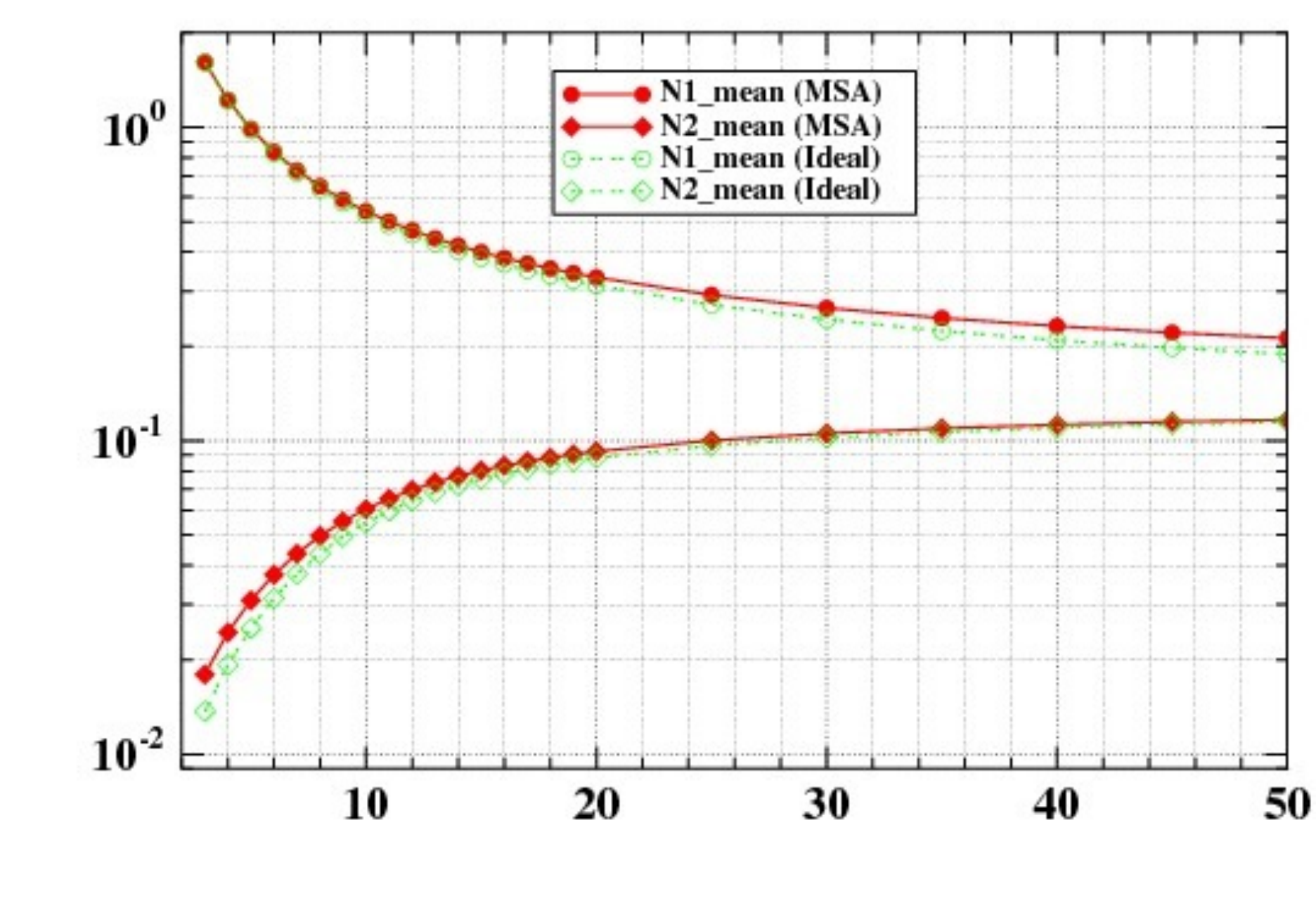}
   \caption{{\em Averaged cell concentration $\textrm{Nj\_mean}=|Y_F|^{-1}\int_{Y_F} n_j(y)\,dy$ versus pore size $\ell$ (nm)}}
\label{fig.conc-pore}
\end{figure}

\begin{figure}[!t]
\centering
    \includegraphics[width=0.7\textwidth]{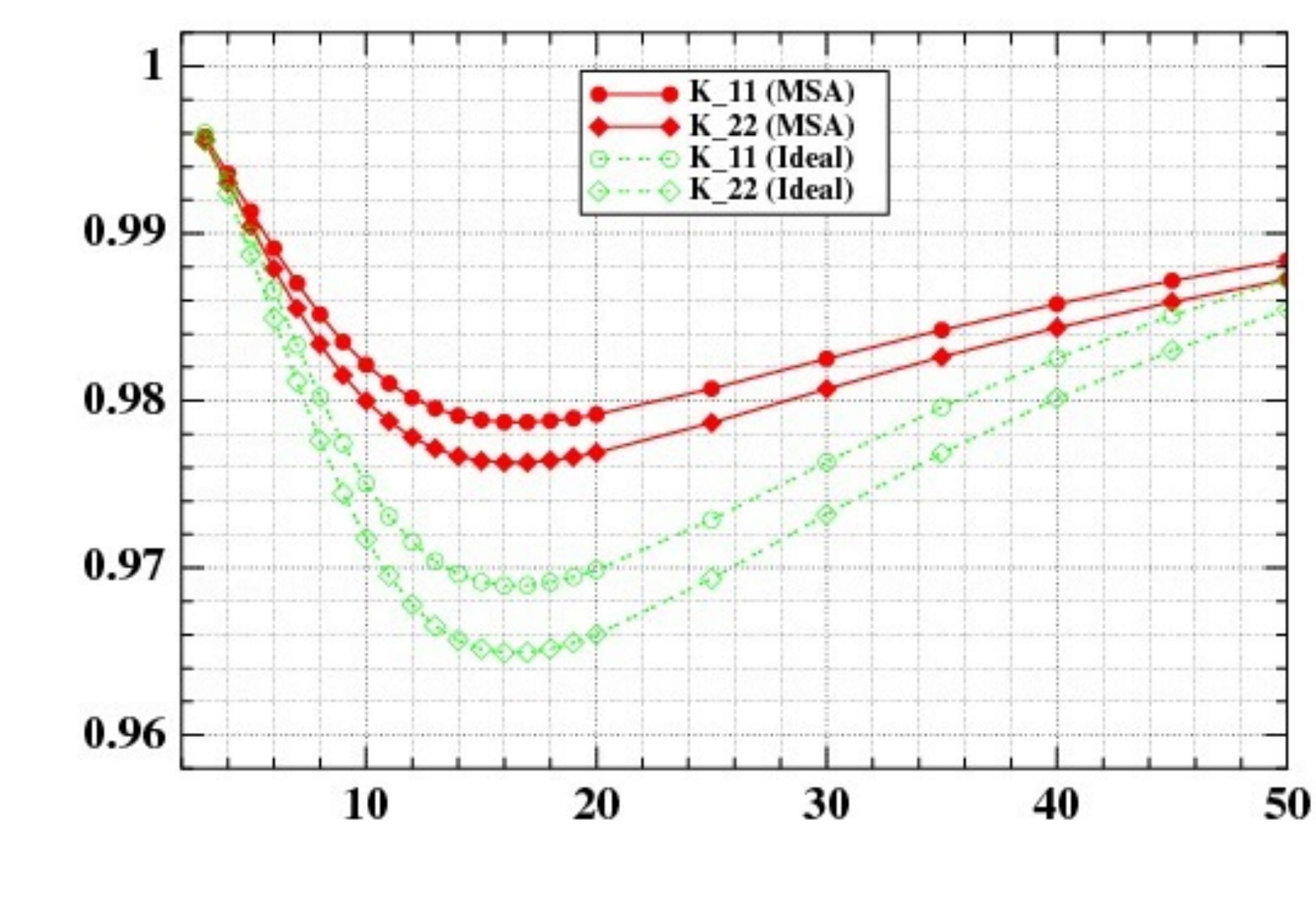}
   \caption{{\em Relative permeability coefficients $\mathbb{K}_{11}$ and $\mathbb{K}_{22}$ versus pore size $\ell$ (nm)}}
\label{fig.perm-pore}
\end{figure}

\subsection{Variation of the porosity}

Eventually we investigate the influence of the porosity on the effective tensors.
To this end we rely on the rectangular geometry where we vary the size
of the inclusions (see Figure \ref{fig.rectangle}). The infinite dilution
concentration is fixed at $n_j^0(\infty) = 1$, or $n_j^*(\infty)= 0.1\,$mol/l.
The porosity is defined as $|Y_F|/|Y|$ and takes the successive values
of $0.19 , \, 0.36, \,0.51, \,0.64, \,0.75$ in our computations.
{   Note that the porosity is independent of the pore size $\ell$
which is defined as the characteristic size of the entire periodicity cell,
i.e., the union of its fluid and solid parts.}
On Figure \ref{fig.conc-rect} we plot the cell-average of the concentrations
$|Y_F|^{-1}\int_{Y_F} n_j(y)\,dy$ as functions of the porosity. They
are almost identical between the ideal and MSA cases. When the porosity goes
to 1, meaning that there are no more solid charged walls, the averaged
concentrations should become equal, respecting the global electroneutrality.
On Figure \ref{fig.perm-rect} we check that the permeability tensor is increasing
with porosity, as expected. The same happens for the electrodiffusion tensor
$\mathbb{D}_{22}$ for the anion on Figure \ref{fig.D22-rect}.
More surprising is the behavior of the electrodiffusion tensor
$\mathbb{D}_{11}$ for the cation on Figure \ref{fig.D11-rect}: again there is a minimum
value attained for a $0.35$ value of the porosity.  This may be explained again by
a transition from a bulk diffusion regime for large porosities to a surface diffusion regime
(caused by the charged boundaries) for small porosities.

\begin{figure}[!t]
\centering
    \includegraphics[width=0.7\textwidth]{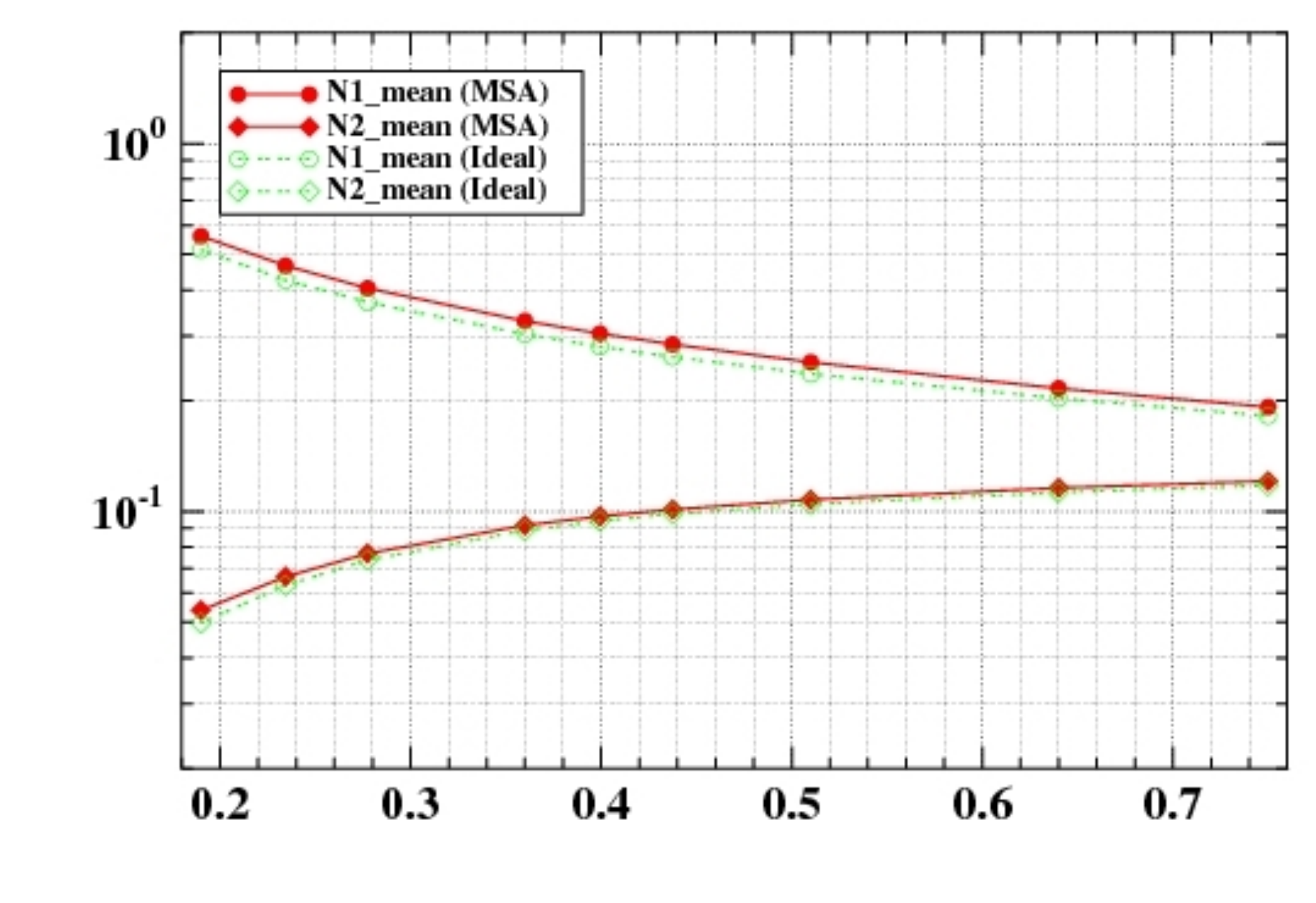}
   \caption{{\em Averaged cell concentration $\textrm{Nj\_mean}=|Y_F|^{-1}\int_{Y_F} n_j(y)\,dy$ versus porosity ($n_j^*(\infty) = 0.1 mole/l$)}}
\label{fig.conc-rect}
\end{figure}

\begin{figure}[!t]
\centering
    \includegraphics[width=0.7\textwidth]{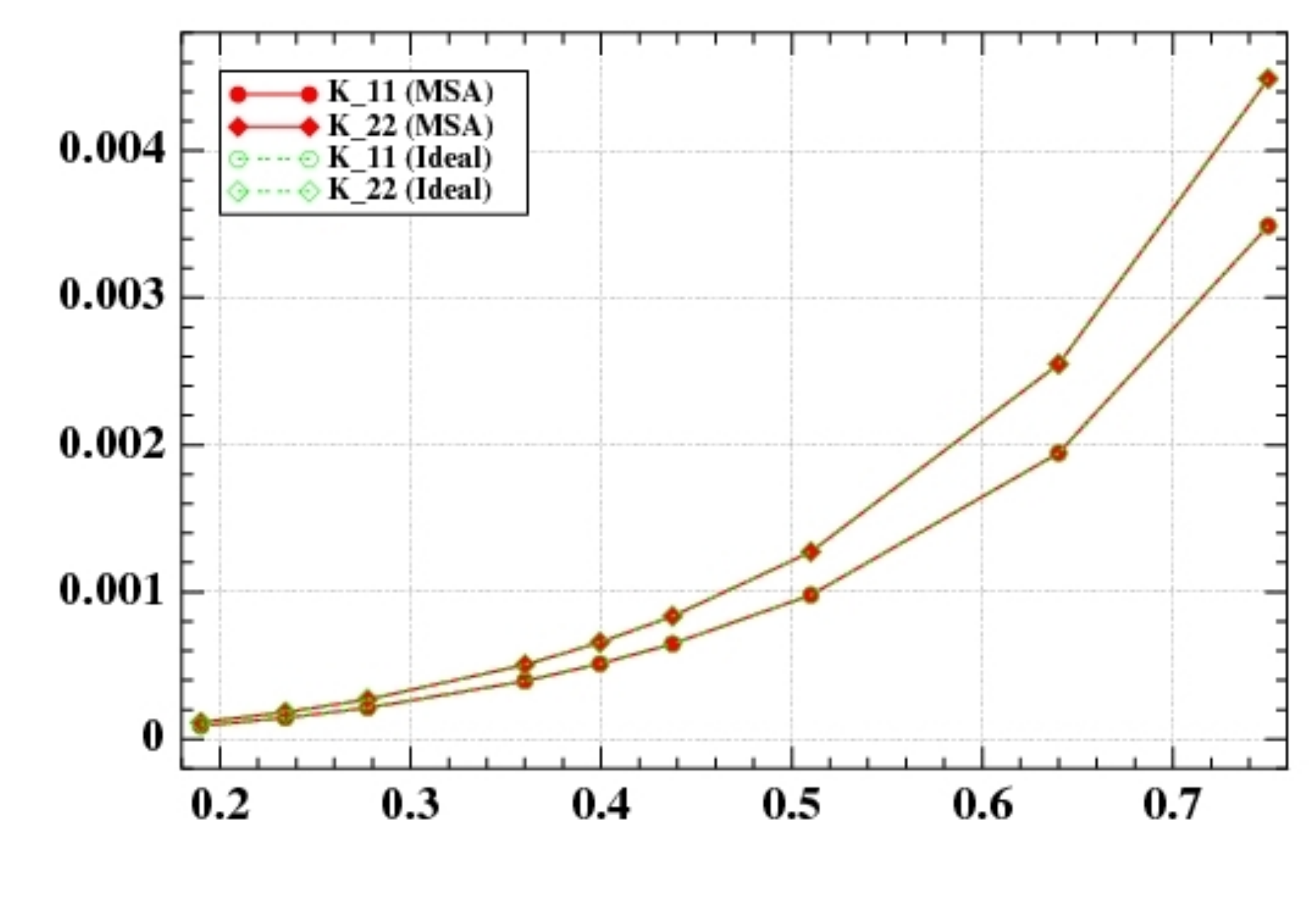}
   \caption{{\em Permeability tensor $\mathbb{K}$ versus porosity ($n_j^*(\infty) = 0.1 mole/l$)}}
\label{fig.perm-rect}
\end{figure}

\begin{figure}[!t]
\centering
    \includegraphics[width=0.7\textwidth]{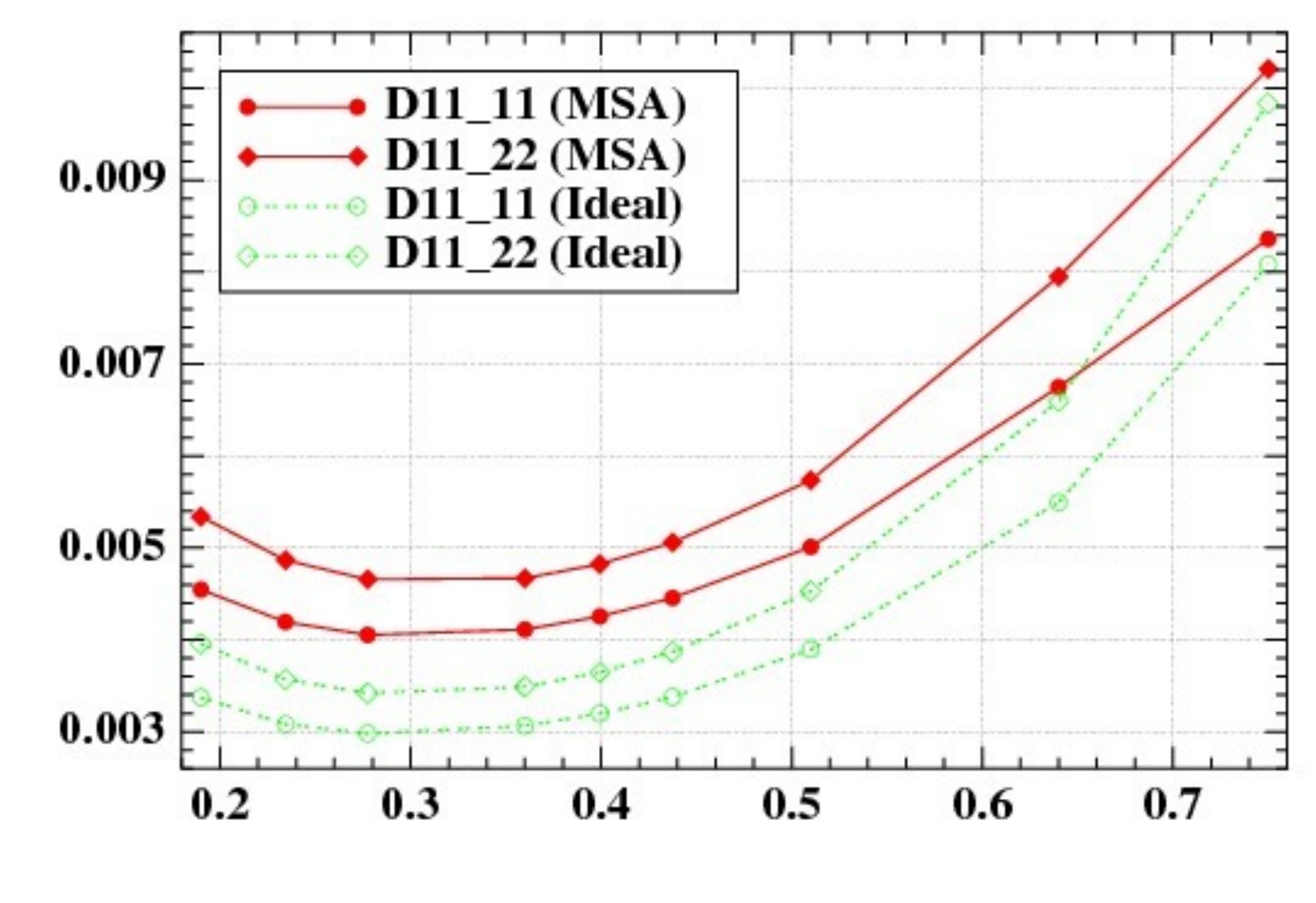}
   \caption{{\em Electrodiffusion tensor $\mathbb{D}_{11}$  for the cation versus porosity ($n_j^*(\infty) = 0.1 mole/l$)}}
\label{fig.D11-rect}
\end{figure}

\begin{figure}[!t]
\centering
    \includegraphics[width=0.7\textwidth]{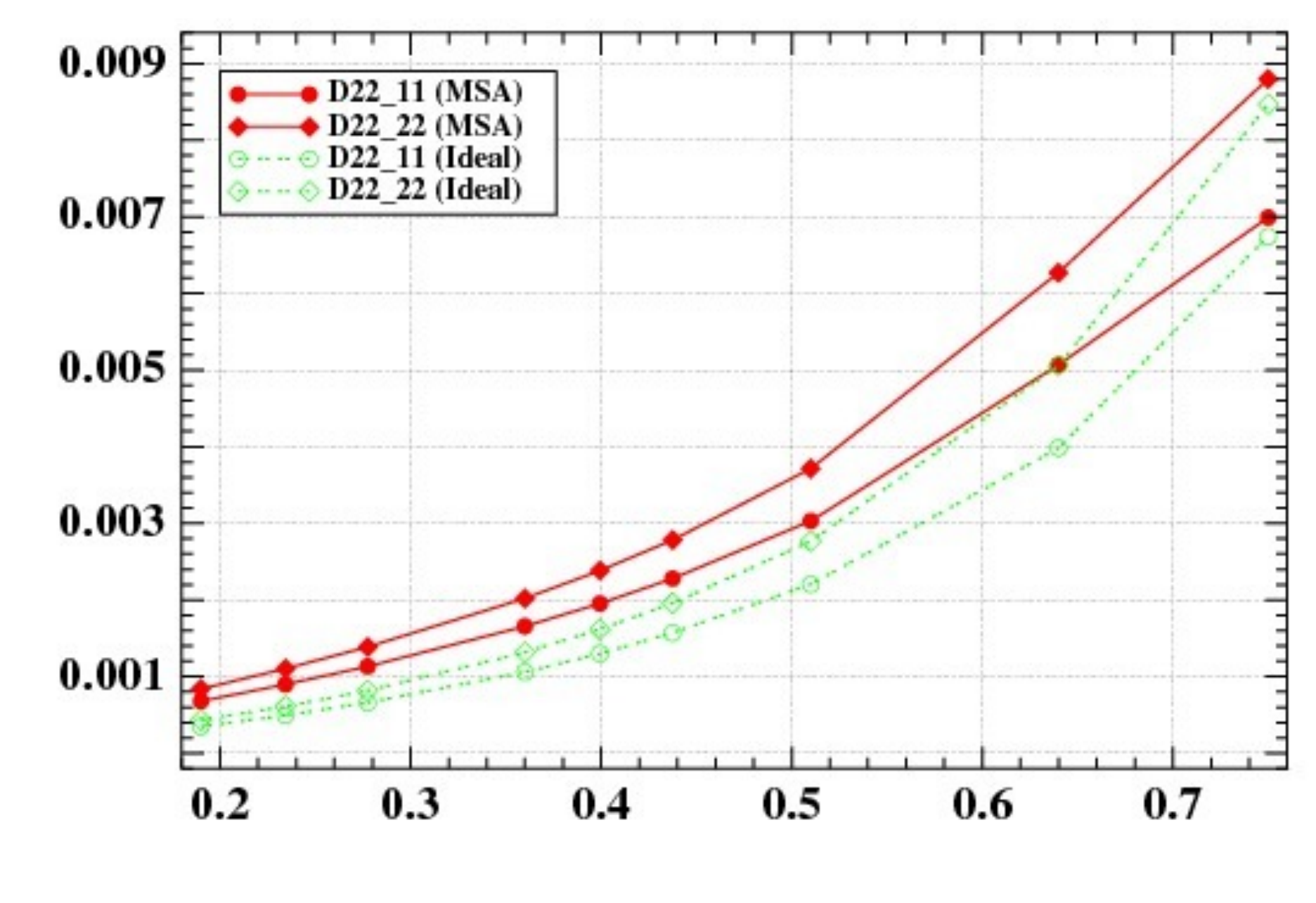}
   \caption{{\em Electrodiffusion tensor $\mathbb{D}_{22}$  for the anion versus porosity ($n_j^*(\infty) = 0.1 mole/l$)}}
\label{fig.D22-rect}
\end{figure}

The departures from ideality are found to be very important. They multiply the magnitude of diffusion by a factor of two, especially at low porosities for which the amount of anions is low. It corresponds to the case for which the relaxation effect is purely repulsive. A similar trend is obtained for the anion but the diffusion coefficient is much lower at low porosities: anions are expelled from the surface and they cannot have surface diffusion so that their transport properties are globally reduced.

\section{Conclusion}

{
We presented the homogenization (or upscaling) of the transport properties for a $N$-component electrolyte solution confined in a charged rigid porous medium. Contrary to what is commonly supposed in this domain the departures from ideality are properly taken into account thanks to a MSA-transport model, both for the equilibrium properties (activity coefficients $\gamma_j$) and for the transport quantities (Onsager coefficients $L_{ij})$. These non ideal effects are expected to be significant in most of the applications for which the electrolyte concentrations are typically molar. In the case of the equilibrium solution (in absence of external forces, apart from the surface charges on the solid wall), we prove the existence of (at least) one solution for small solute packing fractions (which corresponds to the validity of the MSA approach).

When a (small) external electric field is applied or when a (small) hydrodynamic or chemical potential gradient occurs, a rigorous homogenization procedure yields (at the linear response regime) the homogenized macroscopic laws. The effective Onsager tensor takes into account the departure from ideality, but it is still symmetric and positive definite. The significance of non-ideality has been studied by applying the results to a model of porous media (typically geological clays) for simple dissociated 1-1 electrolytes in water. It is shown that non-ideality only slightly modifies the qualitative aspects, but it can strongly modify the quantitative values, depending on the homogenized quantities.

For the equilibrium properties, it enhances the ion concentrations at low external concentration (because electrostatic attraction is predominant) and it reduces them at the opposite limit. The relative permeability tensor is increased but, in any case, it is close to the reference value calculated with a neutral solution.
The differences for the coupled diffusions and ion electrodiffusions depend on the concentrations and on the species. Similarly to bulk diffusion, the non-ideality can have an impact of the order of 50 \% for molar concentrations. Nevertheless, for some cases, there are compensating effects. It should be noted that for the model we considered the charges (ions, solid phase) were relatively low so that the differences should be magnified for highly charged media with higher valency electrolytes and higher concentrations. In that case, the result could be completely different because of the possibility of ion pairing that can change the sign of the ion charge. Nevertheless, the (relatively) simple MSA-transport theory we presented is not valid anymore in that case so that a realistic quantitative description of such complex media would require further developments.

To conclude, we showed that non-ideality can actually be important for the description of porous media.
Since most of the existing effective theories for concentrated systems are based on ideal models which
neglect the departure from ideality, the parameters that can be measured thanks to these
approaches may be wrongly estimated. In that case, they cannot be considered as robust structural quantities of the system: they are effective parameters that depend on the experimental conditions.

%Thus, most of the ideal models applied for concentrated systems are based on effective theories for which the departure from ideality are neglected so that the parameters that can be measured thanks to these approaches are effective quantitites which may not solely correspond to the microscopic values.
}
\section*{\refname}

%%%%%%%%%%%%%%%%%%%%%%%%%%%%%%%%%%%%%%%%%%%%%%%%%%%%%%%%%%%%%%%%%%%%
%%%%%%%%%%%%%%%%%%%%%%%%%%%%%%%%%%%%%%%%%%%%%%%%%%%%%%%%%%%%%%%%%%%%


\begin{thebibliography}{99}


\bibitem{AM:03}
P. M. Adler, V. Mityushev,
{\em Effective medium approximation and exact formulae
for electrokinetic phenomena in porous media},
J. Phys. A: Math. Gen. {\bf 36} (2003), 391-404.

\bibitem{A:01}
P. M. Adler,
{\em Macroscopic electroosmotic coupling coefficient in random porous media},
Math. Geol. {\bf 33(1)} (2001), 63-93.

\bibitem{All92} G. Allaire,
{\em Homogenization and two-scale convergence},
SIAM J. Math. Anal., {\bf 23} (1992), 1482-1518.

\bibitem{All97} G. Allaire,
{\em One-Phase Newtonian Flow},
in Homogenization and Porous Media, ed. U.Hornung, Springer,
New-York, (1997), 45-68.

\bibitem{AMP} G. Allaire, A. Mikeli\'c, A. Piatnitski,
{\em Homogenization of The Linearized Ionic Transport Equations
in Rigid Periodic Porous Media,}
Journal of Mathematical Physics, 51, 123103 (2010).
Erratum in the same journal, 52, 063701 (2011).

\bibitem{ABDMP} G. Allaire, R. Brizzi, J.-F. Dufr\^eche, A. Mikeli\'c, A. Piatnitski,
{\em Ion transport in  porous media: derivation of the macroscopic equations using upscaling and properties of the effective coefficients,}
Comp. Geosci., {\bf 17}, Issue 3, 479-495 (2013).

\bibitem{PoissBoltz:12} G. Allaire, J.-F. Dufr\^eche, A. Mikeli\'c, A. Piatnitski,
{\em Asymptotic analysis of the Poisson-Boltzmann equation describing electrokinetics
in porous media,}
Nonlinearity, {\bf 26} (2013) 881-910.

\bibitem{AS:81} J. L. Auriault, T. Strzelecki,
{\em On the electro-osmotic flow in a saturated porous medium},
Int. J. Engng Sci. {\bf 19} (1981), 915-928.

\bibitem{Barthel} J. M. G. Barthel, H. Krienke, W. Kunz,
{\em Physical Chemistry of Electrolyte Solutions}, Springer, (1998).

\bibitem{blp}
A.~Bensoussan, J.~L. Lions, G.~Papanicolaou,
{\em Asymptotic analysis for periodic structures},
volume~5 of {\em Studies in Mathematics and its Applications}.
North-Holland Publishing Co., Amsterdam (1978).

\bibitem{BernardMSA}
O. Bernard, W. Kunz, P. Turq, L. Blum,
{\em Conductance in Electrolyte Solutions Using the Mean Spherical Approximation},
J. Phys. Chem. {\bf 96} (1992), 3833.

\bibitem{Blum77}
L. Blum J.S. Hoye:
Mean Spherical Model for Asymmetric Electrolytes. 2. Thermodynamic Properties and the Pair Correlation, J. Phys. Chem. {\bf 81} (1977), 1311.

\bibitem{CH:85} D.Y. Chan, R.G. Horn,
{\em The drainage of thin liquid films between solid surfaces},
J. Chem. Phys. {\bf 83} (1985), 5311-5325.

\bibitem{CSTA:96}
D. Coelho, M. Shapiro, J.-F. Thovert, P. M. Adler,
{\em Electro-osmotic phenomena in porous media},
J. Colloid Interface Sci. {\bf 181} (1996), 169-90.

\bibitem{VanDamme}
S. Van Damme, J. Deconinck,
{\em Relaxation Effect on the Onsager Coefficients of Mixed Strong Electrolytes in the Mean
Spherical Approximation}, J. Phys. Chem. {\bf 111} (2007), 5308.

\bibitem{Osmo2}
J.-F. Dufr\^{e}che, V. Marry, N. Malikova, P. Turq,
{\em Molecular hydrodynamics for electro-osmosis in clays: from Kubo to Smoluchowski},
J. Mol. Liq. {\bf 118} (2005), 145.

{   \bibitem{diffPRL}
J.-F. Dufr\^eche, O. Bernard, P. Turq, A. Mukherjee, B. Bagchi,
{\em Ionic self-diffusion in concentrated aqueous electrolyte solutions},
Phys. Rev. Lett. {\bf 88} (2002), 095902.}

\bibitem{DufrecheDB}
J.-F. Dufr\^{e}che, M. Jardat, T. Olynyk, O. Bernard, P. Turq,
{\em Mutual diffusion coefficient of charged particles in the solvent-fixed frame of reference from Brownian dynamics simulation},
J. Chem. Phys. {\bf 117} (2002), 3804.

\bibitem{Dufreche05}
J.-F. Dufr\^{e}che, O. Bernard, S. Durand-Vidal, P. Turq,
{\em Analytical Theories of Transport in Concentrated Electrolyte Solutions from the MSA},
J. Phys. Chem. B {\bf 109} (2005), 9873.

\bibitem{Dufreche02}
J.-F. Dufr\^{e}che, O. Bernard, P. Turq,
{\em Transport equations for concentrated electrolyte solutions: reference frame, mutual diffusion}, J. Chem. Phys. {\bf 116} (2002), 2085.

\bibitem{EbelMSA}
W. Ebeling, J.Rose,
{\em Conductance Theory of Concentrated Electrolytes in an MSA-Type Approximation},
J. Sol. Chem. {\bf 10} (1981), 599.

\bibitem{E:95}
D. A. Edwards,
{\em Charge transport through a spatially periodic porous medium: electrokinetic
and convective dispersion phenomena},
Philos. Trans. R. Soc. Lond. A {\bf 353} (1995), 205-242.

\bibitem{EkTem} I. Ekeland, R. Temam,
{\em Analyse convexe et probl\`emes variationnels},
Dunod, Gautier-Villars, Paris (1979).

\bibitem{EJL} A. Ern, R. Joubaud, T. Leli\`evre,
{\em Mathematical study of non-ideal electrostatic correlations in equilibrium electrolytes,}
Nonlinearity {\bf 25} (2012), 1635-1652.


\bibitem{Ons} S. R. de Groot, P. Mazur,
{\em Non-Equilibrium Thermodynamics},
North-Holland, Amsterdam (1969).

\bibitem{GCA:06}
A.K. Gupta, D. Coelho, P.M. Adler,
{\em Electroosmosis in porous solids for high zeta potentials},
Journal of Colloid and Interface Science {\bf 303} (2006), 593-603.

\bibitem{Hansen}
J.-P. Hansen, I. R. McDonald,
{\em Theory of Simple Liquids}, Academic Press, (1986).

\bibitem{hornung}
U. Hornung, editor.
{\em Homogenization and porous media},
volume~6 of {\em Interdisciplinary Applied Mathematics},
Springer-Verlag, New York (1997).

\bibitem{Jardat}
M. Jardat, J.-F. Dufr\^{e}che, V. Marry, B. Rotenberg, P. Turq,
{\em Salt exclusion in charged porous media: a coarse-graining strategy in the case of montmorillonite clays}, Phys. Chem. Chem. Phys. {\bf 11} (2009), 2023.

\bibitem{joubaud} R. Joubaud,
{\em Mod\'elisation math\'ematique et num\'erique des fluides \`a
l’\'echelle nanom\'etrique,}
PhD thesis, Universit\'e Paris Est (2012).

\bibitem{KBA:05}
G. Karniadakis, A. Beskok, N. Aluru,
{\em Microflows and Nanoflows. Fundamentals and Simulation},
Interdisciplinary Applied Mathematics, Vol. 29, Springer, New York (2005).

\bibitem{LIO:81} J.-L. Lions,
{\em Some methods in the mathematical analysis of systems and their controls},
Science Press, Beijing, Gordon and Breach, New York (1981).

\bibitem{LA} R. Lipton, M. Avellaneda,
{\em A Darcy Law for Slow Viscous Flow Past a Stationary Array of Bubbles},
Proc. Royal Soc. Edinburgh {\bf 114A}, (1990), 71-79.

\bibitem{L:06}
J.R. Looker,
{\em Semilinear elliptic Neumann problems and rapid growth in the nonlinearity},
Bull. Austral. Math. Soc., Vol. {\bf 74}, no.2, (2006), 161-175.

\bibitem{LC:06}
J.R. Looker, S.L. Carnie,
{\em Homogenization of the ionic transport equations in periodic porous media},
Transp. Porous Media {\bf 65} (2006), 107-131.

\bibitem{Lyklema} J. Lyklema,
{\em Fundamentals of Interface ans Colloid Science}, Academic Press, (1995).

\bibitem{MSA:01}
S. Marino, M. Shapiro, P.M. Adler,
{\em Coupled transports in heterogeneous media},
J. Colloid Interface Sci. {\bf 243} (2001), 391-419.

\bibitem{Osmo1} V. Marry, J.-F. Dufr\^{e}che, M. Jardat P. Turq,
{\em Equilibrium and electrokinetic phenomena in charged porous media from microscopic and mesoscopic models: electro-osmosis in montmorillonite}, Mol. Phys. {\bf 101} (2005), 3111.

\bibitem{MM:02}
C. Moyne, M. Murad,
{\em Electro-chemo-mechanical couplings in swelling clays
derived from a micro/macro-homogenization procedure},
Int. J. Solids Structures {\bf 39} (2002), 6159-6190.

\bibitem{MM:03}
C. Moyne, M. Murad,
{\em Macroscopic behavior of swelling porous media derived
from micromechanical analysis}, Transport Porous Media {\bf 50} (2003), 127-151.

\bibitem{MM:06}
C. Moyne, M. Murad,
{\em A Two-scale model for coupled electro-chemomechanical phenomena and
Onsager's reciprocity relations in expansive clays: I Homogenization analysis},
Transport Porous Media {\bf 62} (2006), 333-380.

\bibitem{MM:06a}
C. Moyne, M. Murad,
{\em A two-scale model for coupled electro-chemo-mechanical phenomena and
Onsager's reciprocity relations in expansive clays: II. Computational validation},
Transp. Porous Media {\bf 63(1)} (2006), 13-56.

\bibitem{MM:08}
C. Moyne, M. Murad,
{\em A dual-porosity model for ionic solute transport in expansive clays},
Comput Geosci {\bf 12} (2008), 47-82.

\bibitem{NGU} G. Nguetseng,
{\em A general convergence result for a functional related to the
theory of homogenization}, SIAM J. Math. Anal. {\bf20}(3), 608-623 (1989).

\bibitem{OBW:78}
R. W. O'Brien, L. R. White,
{\em Electrophoretic mobility of a spherical colloidal particle},
J. Chem. Soc., Faraday Trans. {\bf 2 74(2)} (1978), 1607-1626.

\bibitem{FreeFM} O. Pironneau, F. Hecht, A. Le Hyaric,
{\em FreeFem++ version 3.8}, {\tt http://www.freefem.org/ff++/}.

\bibitem{Rayetal:11} N. Ray, Ch. Eck, A. Muntean, P. Knabner,
{\em Variable Choices of Scaling in the Homogenization of a Nernst-Planck-Poisson
Problem}, preprint no. 344, Institut f\"ur Angewandte Mathematik,
Universitaet Erlangen-N\"urnberg (2011).

{   \bibitem{Rayetal:12} N. Ray, A. Muntean, P. Knabner,
{\em Rigorous homogenization of a Stokes-Nernst-Planck-Poisson system},
J. Math. Anal. Appl. {\bf 390}(1) (2012), 374-393.}

\bibitem{RS70} R. A. Robinson, R. H. Stokes,
{\em Electrolyte Solutions}, Butterworths, London (1970).

\bibitem{RPA:06}
M. Rosanne, M. Paszkuta, P.M. Adler,
{\em Electrokinetic phenomena in saturated compact clays},
Journal of Colloid and Interface Science, {\bf 297} (2006), 353-364.

\bibitem{Rotenberg} B. Rotenberg, I. Pagonabarraga,
{\em Electrokinetics: insights from simulation on the microscopic scale},
Mol. Phys. {\bf 111} (2013), 827.

\bibitem{SP80} E. Sanchez-Palencia,
{\em Non-Homogeneous Media and Vibration Theory},
Lecture Notes in Physics \rm 127, Springer Verlag, (1980).

\bibitem{schmuck} M. Schmuck,
{\em Modeling And Deriving Porous Media Stokes-Poisson-Nernst-Planck
Equations By A Multiple-Scale Approach,}
Commun. Math. Sci. 9 (2011), no. 3, 685-710.



\end{thebibliography}
\end{document}